\documentclass[11pt,a4paper,oneside]{amsart}




\newcounter{commentcounter}

\usepackage{amsmath} 
\usepackage{amssymb} 
\usepackage{amsthm} 
\usepackage{stmaryrd} 
\usepackage[english]{babel} 
\usepackage[font=small,justification=centering]{caption} 
\usepackage[nodayofweek]{datetime}
\usepackage[shortlabels]{enumitem} 
\usepackage[T1]{fontenc} 
\usepackage[utf8]{inputenc} 
\usepackage{ifthen} 
\usepackage{mathabx} 
\usepackage{mathtools} 
\usepackage[dvipsnames]{xcolor} 
\usepackage[pdftex,  colorlinks=true]{hyperref} 
    \hypersetup{urlcolor=RoyalBlue, linkcolor=RoyalBlue,  citecolor=black}
\usepackage{setspace} 
\usepackage{tikz-cd} 
\usepackage{xfrac} 
\usepackage[capitalize]{cleveref} 



\makeatletter
\@namedef{subjclassname@1991}{Mathematical subject classification 1991}
\@namedef{subjclassname@2000}{Mathematical subject classification 2000}
\@namedef{subjclassname@2010}{Mathematical subject classification 2010}
\@namedef{subjclassname@2020}{Mathematical subject classification 2020}
\makeatother


\newtheorem{theorem}{Theorem}[section]
\newtheorem{lemma}[theorem]{Lemma}
\newtheorem{corollary}[theorem]{Corollary}
\newtheorem{proposition}[theorem]{Proposition}

\newtheorem{question}[theorem]{Question}
\newtheorem{problem}[theorem]{Problem}
\newtheorem{defn_thm}[theorem]{Definition/Theorem}


\theoremstyle{definition}
\newtheorem{definition}[theorem]{Definition}
\newtheorem{remark}[theorem]{Remark}

\theoremstyle{plain}

\newtheoremstyle{TheoremNum}
     {8.0pt plus 2.0pt minus 4.0pt}{8.0pt plus 2.0pt minus 4.0pt} 
     {\itshape} 
     {-0.15cm} 
     {\bfseries} 
     {.} 
     { }  
     {\thmname{#1}\thmnote{ \bfseries #3}}
    \theoremstyle{TheoremNum}
\newtheorem{duplicate}{}

\newcommand*{\claimproofname}{My proof}




\newcommand{\FP}{\mathsf{FP}}
\newcommand{\F}{\mathsf{F}}

\newcommand{\Z}{\mathbb{Z}}
\newcommand{\ZZ}{\mathbb{Z}}
\newcommand{\RR}{\mathbb{R}}
\newcommand{\NN}{\mathbb{N}}
\newcommand{\QQ}{\mathbb{Q}}

\DeclareMathOperator{\rank}{rank}
\DeclareMathOperator{\Hom}{Hom}


\usepackage{tikz}
\usetikzlibrary{arrows,quotes}
\tikzstyle{blackNode}=[fill=black, draw=black, shape=circle]

\def\raag{right-angled Artin group }
\def\raags{right-angled Artin groups }
\def\raagstop{right-angled Artin group. }

\newcommand{\lb}{b^{(2)}}
\newcommand{\calD}{\mathcal{D}}
\newcommand{\rankD}{\mathrm{rank}_{\calD_G}}

\DeclareMathOperator{\thi}{th}
\DeclareMathOperator{\fab}{fab}

\DeclareMathOperator{\cut}{cut}


\makeatletter
\DeclareFontFamily{OMX}{MnSymbolE}{}
\DeclareSymbolFont{MnLargeSymbols}{OMX}{MnSymbolE}{m}{n}
\SetSymbolFont{MnLargeSymbols}{bold}{OMX}{MnSymbolE}{b}{n}
\DeclareFontShape{OMX}{MnSymbolE}{m}{n}{
    <-6>  MnSymbolE5
   <6-7>  MnSymbolE6
   <7-8>  MnSymbolE7
   <8-9>  MnSymbolE8
   <9-10> MnSymbolE9
  <10-12> MnSymbolE10
  <12->   MnSymbolE12
}{}
\DeclareFontShape{OMX}{MnSymbolE}{b}{n}{
    <-6>  MnSymbolE-Bold5
   <6-7>  MnSymbolE-Bold6
   <7-8>  MnSymbolE-Bold7
   <8-9>  MnSymbolE-Bold8
   <9-10> MnSymbolE-Bold9
  <10-12> MnSymbolE-Bold10
  <12->   MnSymbolE-Bold12
}{}

\let\llangle\@undefined
\let\rrangle\@undefined
\DeclareMathDelimiter{\llangle}{\mathopen}%
                     {MnLargeSymbols}{'164}{MnLargeSymbols}{'164}
\DeclareMathDelimiter{\rrangle}{\mathclose}%
                     {MnLargeSymbols}{'171}{MnLargeSymbols}{'171}
\makeatother

\title[Thurston norm for RAAGs]{Thurston norm for coherent right-angled Artin groups via $L^2$-invariants}

\usepackage[foot]{amsaddr}
\author{Monika Kudlinska}
\address{DPMMS, Centre for Mathematical Sciences, Wilberforce Road, Cambridge,
CB3 0WB, UK, and Emmanuel College, St Andrew's Street, Cambridge CB2 3AP, UK}
\email{m.kudlinska@dpmms.cam.ac.uk}

\begin{document}

\maketitle

\begin{abstract}
   We define a new notion of splitting complexity for a group $G$ along a non-trivial integral character $\phi \in H^1(G; \Z)$. If $G$ is a one-ended coherent right-angled Artin group, we show that the splitting complexity along an epimorphism $\phi \colon G \to \Z$ equals the $L^2$-Euler characteristic of the kernel of $\phi$. This allows us to define a Thurston-type semi-norm $\| \cdot \|_T \colon H^1(G ; \RR) \to \RR$ which measures the splitting complexity of integral characters. 
   
   Our main tool is Friedl--L\"{u}ck's $L^2$-polytope which we compute for all groups in this class. We also compute the $L^2$-Betti numbers of all kernels of characters $G \to \Z$ and show that the $L^2$-homology of any finitely generated subgroup of $G$ is concentrated in the first dimension, which may be of independent interest.
\end{abstract}

\section{Introduction}

Let $M$ be a compact orientable 3-manifold. For any compact surface $S$ with connected components $S_1, \ldots, S_n$, the complexity of $S$ is defined to be
\[ \chi_{-}(S) := \sum_{i = 1}^{n} \max\{-\chi(S_i), 0\}.\]
The complexity of an integral character $\phi \in H^1(M; \ZZ)$ is the minimal complexity of a properly embedded surface $S \subset M$ that represents the second homology class of $M$ dual to $\phi$. 

In his seminal paper \cite{Thurston1986}, Thurston shows that the function $H^1(M ;\Z) \to \RR$ that assigns to each character $\phi$ its complexity can be extended continuously to a function $\| \cdot \|_T$ on $H^1(M ; \RR)$ that is convex and linear on rays through the origin. We call $\| \cdot \|_T$ the \emph{Thurston semi-norm} on $M$.
The unit ball of the Thurston semi-norm, 
\[B_T := \{\phi \in H^{1}(M ; \RR) : \| \phi \|_T \leq 1\},\]
is a finite-sided rational polytope which is referred to as the \emph{Thurston polytope}.


\medskip

In this paper we propose a group-theoretic analogue of the Thurston semi-norm which measures the \emph{splitting complexity} of integral characters of a group. More precisely, let $G$ be a group and let $\phi \in H^1(G; \Z)$ be a non-trivial character. A graph-of-groups splitting of $G$ is said to be \emph{dual} to $\phi$ if every vertex group is identified with a subgroup of the kernel of $\phi$. The \emph{complexity} of a graph-of-groups splitting, roughly, measures the total sum of the Euler characteristics of the edge groups. The \emph{splitting complexity} $c(G; \phi)$ of $G$ along $\phi$ is defined to be the infimal complexity over all graph-of-groups splittings of $G$ dual to $\phi$ with vertex and edge groups of type $\FP$.  If $M$ is a compact, connected, orientable, aspherical 3-manifold, and $\phi$ a non-trivial integral character, then the splitting complexity of $\pi_1(M)$ along $\phi$ coincides with the Thurston norm of $\phi$.

We begin our investigation of the group-theoretic Thurston semi-norm within the family of \emph{right-angled Artin groups}. Recall, a right-angled Artin group is any group which admits a presentation with finitely many generators, where the only relations are commutators between the defining generators. Such a presentation can be encoded via a simplicial graph, where the vertices correspond to the generators, and two generators commute whenever the corresponding vertices are connected by an edge in the graph. 

Right-angled Artin groups form a natural and widely studied class of groups. They have played a key role in the resolution of Thurston's virtual Haken conjecture, and they continue to be the focus of study, both in their own right and through their applications to Geometric Group Theory and Low-Dimensional Topology.

There is a close relationship between the algebra of a right-angled Artin group and the combinatorial structure of its defining graph and its flag completion (see \cite{Koberda2022} for a recent survey of this subject). In particular, a right-angled Artin group is coherent if and only if the defining graph has no induced $n$-cycles for $n \geq 4$ \cite{Droms1987}. A well-known family of coherent right-angled Artin groups consists of those of \emph{Droms-type}. There has been much recent progress in studying limit groups over coherent and Droms \raags \cite{CRMAK2023, KochloukovaLGZ2022, Fruchter2023}.


The purpose of this article is to show that coherent \raags admit a Thurston-type norm which is completely analogous to the Thurston norm in the setting of 3-manifolds:

\begin{duplicate}[\cref{thm_main}]
    Let $G$ be a one-ended coherent \raagstop There is a canonical continuous function 
    \[ \| \cdot \|_T \colon H^1(G; \RR) \to \RR \]
    which satisfies the following. 
    \begin{enumerate}
        \item The function $\| \cdot \|_T$ is convex and linear on rays through the origin. 
        \item For any non-trivial integral character $\phi \in H^1(G ; \Z)$, we have that $\| \phi \|_T = c(G; \phi)$ where $c(G; \phi)$ is the \emph{splitting complexity of $G$ along $\phi$} (see \cref{def:splitting_complexity}).
        \item In particular, for any algebraically fibered epimorphism $\phi \colon G \to \Z$, we have that $\| \phi \|_T = \chi(\ker \phi)$, where $\chi(\ker \phi)$ is the Euler characteristic of the group $\ker \phi$.
    \end{enumerate}
\end{duplicate}

Recall that an epimorphism $\phi \colon G \to \Z$ is said to be \emph{algebraically fibered} if $\ker \phi$ is finitely generated.

\begin{remark}We note that \cref{thm_main} is false without the assumption of coherence. Indeed, if $G$ is an incoherent one-ended right-angled Artin group then there exists an epimorphism $\phi \colon G \to \Z$ which has a finitely generated and non-finitely presented kernel. Then, the HNN splitting $G \cong \ker \phi \ast_{\ker \phi}$ is the unique minimal splitting dual to $\phi$. Hence, there does not exist a splitting dual to $\phi$ that is of finite type.
\end{remark}

\subsection{Proof strategy via $L^2$-polytopes}

Our approach for proving \cref{thm_main} utilises Friedl--L\"{u}ck's \emph{universal $L^2$-torsion}, which is an invariant of certain $L^2$-acyclic $\Z G$-chain complexes first defined in \cite{FriedlLueck2017}. The universal $L^2$-torsion encompasses a number of important invariants from the world of 3-Manifold Theory and Knot Theory, including the $L^2$-torsion and the Thurston polytope. In particular, it gives rise to the so-called \emph{$L^2$-polytope}, which coincides with the Thurston polytope in the setting of compact $L^2$-acyclic 3-manifolds (see, for example, \cite[pp.72--73]{DeBloisFriedlVidussi2014}).

Let $G$ be an $L^2$-acyclic group of type $\F$ that satisfies the Atiyah conjecture. Let $X$ be a free finite $G$-CW complex. The \emph{universal $L^2$-torsion} of $X$, denoted by $\rho^{(2)}_u(X; G)$, is an element of the weak Whitehead group $\mathrm{Wh}^w(G)$. It is computed via a weak chain contraction of the $L^2$-acyclic chain complex associated to the $G$-action on $X$. The \emph{$L^2$-polytope} $\mathcal{P}(X; G)$ is the image of the universal $L^2$-torsion $ - \rho^{(2)}_u(X; G)$ under the polytope homomorphism,
\[ \mathcal{P}_T \colon \mathrm{Wh}^w(G) \to \mathfrak{P}_T(G_{\mathrm{fab}}),\]
where $\mathfrak{P}_T(G_{\mathrm{fab}})$ denotes the Grothendieck group of translation-invariant polytopes in $\RR \otimes_{\Z} G_{\mathrm{fab}}$. Kielak shows that the $L^2$-polytope is a group invariant in the setting of $L^2$-acyclic groups of type $\F$ which satisfy the Atiyah conjecture \cite[Theorem~5.17]{Kielak2020}. 

The $L^2$-polytope $\mathcal{P}(G)$ gives rise to a map 
\[\mathrm{th}_{\bullet}\mathcal{P}(G) \colon H^1(G; \RR) \to \RR,\]
which measure the thickness of each character $\phi \in H^1(G; \RR)$ along the polytope $\mathcal{P}(G)$.
In the case that the $L^2$-polytope is represented by a \emph{single polytope}, the thickness map is a semi-norm on the vector space $H^1(G; \RR)$. Moreover, the thickness of a non-trivial integral character $\phi \colon G \to \Z$ along the $L^2$-polytope is exactly the $L^2$-Euler characteristic of its kernel, 
\[\mathrm{th}_{\phi}\mathcal{P}(G) = - \chi^{(2)}(\ker \phi). \]

\medskip

The key technical theorem in this paper involves relating the $L^2$-Euler characteristic of the kernel of an epimorphism $\phi \colon G \to \Z$ to the splitting complexity of the group $G$ along $\phi$:

\begin{theorem}\label{thm:L2Euler}
    Let $G \not\cong \Z$ be a one-ended coherent right-angled Artin group and let $\phi \colon G \to \Z$ be an epimorphism. Then, 
    \[ - \chi^{(2)}(\ker \phi) = c(G; \phi),\]
    where $c(G; \phi)$ is the splitting complexity of $G$ along $\phi$.
\end{theorem}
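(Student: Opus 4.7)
The plan is to prove the two inequalities $-\chi^{(2)}(\ker\phi)\le c(G;\phi)$ and $c(G;\phi)\le -\chi^{(2)}(\ker\phi)$ separately. Since one-ended coherent \raags are of type $\F$, are $L^2$-acyclic, and satisfy the Atiyah conjecture, the universal $L^2$-torsion $\rho^{(2)}_u(G)$ and the $L^2$-polytope $\mathcal{P}(G)$ are defined, and by the identity recalled in the introduction, $\mathrm{th}_{\phi}\mathcal{P}(G) = -\chi^{(2)}(\ker\phi)$. It therefore suffices to compare the thickness of $\mathcal{P}(G)$ in direction $\phi$ with the splitting complexity.

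For the upper bound I would start with an arbitrary graph-of-groups decomposition $\mathcal{G}$ of $G$ dual to $\phi$, with vertex groups $G_v$ and edge groups $G_e$ of type $\FP$ and total complexity $c$. The Bass--Serre tree of $\mathcal{G}$ yields the $G$-equivariant short exact sequence
\[ 0 \to \bigoplus_{e\in E(\mathcal{G})} \Z[G/G_e] \to \bigoplus_{v\in V(\mathcal{G})} \Z[G/G_v] \to \Z \to 0, \]
which, combined with finite free $\Z G_v$- and $\Z G_e$-resolutions of $\Z$ supplied by the $\FP$ hypothesis, assembles into a finite free $\Z G$-resolution of $\Z$. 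Friedl--L\"{u}ck's additivity of universal $L^2$-torsion under graph-of-groups decompositions expresses $\rho^{(2)}_u(G)$, and hence $\mathcal{P}(G)$, as an alternating sum of inductions $\mathrm{ind}^G_{G_v}\mathcal{P}(G_v)$ and $\mathrm{ind}^G_{G_e}\mathcal{P}(G_e)$ together with a combinatorial term coming from the underlying graph of $\mathcal{G}$. Applying $\mathrm{th}_{\phi}(\cdot)$, the vertex and edge polytope contributions vanish: since $G_v, G_e\subseteq\ker\phi$, their induced polytopes lie in the image of $\RR\otimes_{\Z}(\ker\phi)_{\mathrm{fab}}$, on which $\phi$ is zero. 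What remains is bounded above by $\sum_{e\in E(\mathcal{G})}(-\chi(G_e)) = c$, giving $-\chi^{(2)}(\ker\phi) = \mathrm{th}_{\phi}\mathcal{P}(G)\le c$.

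For the reverse inequality I would construct, for each non-trivial integral $\phi$, a graph-of-groups splitting of $G$ dual to $\phi$ of complexity exactly $-\chi^{(2)}(\ker\phi)$. Coherence enters crucially: the defining graph $\Gamma$ of $G$ is chordal, so either $\Gamma$ is a single clique (in which case $G\cong \Z^n$ with $n\ge 2$ by one-endedness, handled as a base case by inspection) or $\Gamma$ admits a clique cut-set $K$ splitting $\Gamma$ into $\Gamma_1,\Gamma_2$ and yielding a visual amalgam $G = A_{\Gamma_1}*_{A_K}A_{\Gamma_2}$. I would induct on $|V(\Gamma)|$: apply the inductive hypothesis to the restrictions $\phi|_{A_{\Gamma_i}}$ (when non-trivial) inside the smaller coherent \raags $A_{\Gamma_i}$, and glue the resulting splittings along the abelian subgroup $A_K$, introducing a new HNN stable letter when $\phi|_{A_K}$ is non-trivial. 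A direct computation of the complexity of the glued splitting, compared against the inductive description of $\mathcal{P}(G)$ built up from the clique-tree of $\Gamma$, should yield equality.

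The main obstacle is the gluing step in the lower-bound direction. One must ensure that the inductively produced splittings of $A_{\Gamma_1}$ and $A_{\Gamma_2}$ are compatible on the common subgroup $A_K$ without inflating complexity, verify that every vertex group in the glued splitting is of type $\FP$ (using coherence of $G$ a second time, to upgrade finite generation of the relevant kernel-intersections inside each factor to finite presentation), and ensure that the combinatorial contribution to $\mathcal{P}(G)$ from the splitting saturates the predicted thickness. A cleaner alternative would be to compute $\mathcal{P}(A_\Gamma)$ directly as a polytope read off from $\Gamma$ and extract an optimal splitting from its face structure, in analogy with Thurston's correspondence between top-dimensional faces of the Thurston polytope and fibered classes of a $3$-manifold.
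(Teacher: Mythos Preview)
Your construction of an efficient splitting (the direction $c(G;\phi)\le -\chi^{(2)}(\ker\phi)$) is close in spirit to the paper's Proposition~5.2, which builds the splitting directly from the ``living blocks'' of $\phi$ rather than by clique-cut-set induction; either route should work, and the paper's is somewhat more explicit.

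The genuine gap is in the other direction. You invoke Friedl--L\"uck additivity of the universal $L^2$-torsion to write $\mathcal{P}(G)$ as an alternating sum of induced polytopes $\mathcal{P}(G_v)$, $\mathcal{P}(G_e)$ plus a combinatorial remainder. But that additivity formula (the paper's Lemma~3.7, citing \cite[Theorem~2.5(2)]{FriedlLueck2017}) requires each piece to be $L^2$-acyclic, so that its universal $L^2$-torsion is defined. In an \emph{arbitrary} finite-type splitting dual to $\phi$ the vertex and edge groups are just finitely generated subgroups of a coherent RAAG; by Corollary~4.5 of the paper their $L^2$-homology is concentrated in degree~$1$ and is typically nonzero, so $\mathcal{P}(G_v)$ and $\mathcal{P}(G_e)$ are simply undefined and the sum formula is unavailable. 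There is also no mechanism in your sketch by which the weights $|\phi(t_e)|$ in the definition $c(\mathbb{X};\phi)=-\sum_e|\phi(t_e)|\chi(\mathbb{X}_e)$ would emerge from a ``combinatorial term'': if the pieces \emph{were} $L^2$-acyclic the sum formula has no such term, and your displayed bound $\sum_e(-\chi(G_e))$ is not the complexity.

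The paper circumvents this entirely. It never applies polytope additivity to the arbitrary splitting; instead it proves the inequality $b_1^{(2)}(\ker\phi)\le \sum_e|\phi(t_e)|\,b_1^{(2)}(\mathbb{X}_e)$ directly (Proposition~5.3), by restricting the Bass--Serre tree to $\ker\phi$, exhausting the quotient by finite connected subgraphs $Y^{(k)}$, and computing $H_1(\pi_1(\mathbb{Y}^{(k)});\mathcal{D}_G)$ via Mayer--Vietoris over the Linnell skew field. The hypotheses $b_0^{(2)}(\mathbb{X}_e)=0$, $b_2^{(2)}(H)=0$ for all finitely generated $H\le G$, and $b_1^{(2)}(\mathbb{X}_v)=\sum_e b_1^{(2)}(\mathbb{X}_e)$ (the last from $\chi(G)=0$) make the Mayer--Vietoris sequence short exact in the relevant degree and force the ranks to stabilise at exactly $\sum_e|\phi(t_e)|\,b_1^{(2)}(\mathbb{X}_e)$. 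This is the substantive idea your proposal is missing.
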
 
\medskip

A 3-manifold $M$ is said to be \emph{admissible} if it is connected, compact, orientable, irreducible with empty or toroidal boundary and $\pi_1(M)$ is infinite. We compare \cref{thm:L2Euler} with the following result:

\begin{theorem}[{Friedl--L\"{u}ck \cite[Theorem~0.2]{FriedlLueck2019}}]
    Let $M \neq S^2 \times D^1$ be an admissible 3-manifold and let $\phi \colon \pi_1(M) \to \Z$ be an epimorphism. Then,
    \[ -\chi^{(2)}(\ker \phi) = c(\phi),\]
    where $c(\phi)$ denotes the minimal complexity of an embedded oriented surface $S$ which represents the class $c \in H^1(M, \partial M ; \Z)$ which is the Poincar\'{e}-Lefschetz dual of $\phi$.
\end{theorem}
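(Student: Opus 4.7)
The plan is to follow Friedl--L\"{u}ck's strategy: interpret both sides of the equality via the $L^2$-torsion polytope of $M$, and then identify this polytope with the classical Thurston polytope $B_T(M)$.

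First, I would set up the $L^2$-torsion machinery for $M$. By Lott--L\"{u}ck, admissibility of $M$ implies that $\pi_1(M)$ is $L^2$-acyclic, and the Atiyah conjecture for $\pi_1(M)$ follows from the virtual specialness of $M$ (Agol--Wise) combined with Linnell's theorem. Thus the universal $L^2$-torsion $\rho^{(2)}_u(M)$ and the associated $L^2$-polytope $\mathcal{P}(M) \subset \RR \otimes_\ZZ \pi_1(M)_{\mathrm{fab}}$ are both well-defined, and the general thickness formula gives
\[
\mathrm{th}_\phi \mathcal{P}(M) \;=\; -\chi^{(2)}(\ker \phi)
\]
for every non-trivial $\phi \colon \pi_1(M) \to \ZZ$. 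This reduces the theorem to the polytope-level statement $\mathrm{th}_\phi \mathcal{P}(M) = c(\phi) = \|\phi\|_T$, i.e.\ to identifying the thickness of $\mathcal{P}(M)$ in the $\phi$-direction with the Thurston norm of $\phi$.

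Second, I would reduce to geometric pieces via JSJ and Perelman's geometrization theorem. Each JSJ torus has $L^2$-acyclic fundamental group, and Thurston showed that his norm is additive across JSJ tori for a norm-minimizing surface in general position; a Mayer--Vietoris computation yields matching additivity for $\mathrm{th}_\phi \mathcal{P}(M)$. It therefore suffices to verify the equality on each Seifert-fibered or finite-volume hyperbolic piece individually, with $\phi$ restricted appropriately. Seifert pieces admit a direct calculation in terms of the base orbifold, where both quantities are explicitly readable off the Seifert invariants.

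Third, for hyperbolic pieces I would exploit Agol's virtual fibering theorem to pass to a finite regular cover $p\colon\widetilde{M}\to M$ which fibers over $S^1$. For any fibered class $\widetilde\psi \in H^1(\widetilde{M}; \ZZ)$, the kernel is a surface group $\pi_1(\Sigma)$, and the identity
\[
-\chi^{(2)}(\pi_1(\Sigma)) \;=\; -\chi(\Sigma) \;=\; \|\widetilde\psi\|_T
\]
is immediate from the basic $L^2$-theory of surface groups. By Thurston's fibered-face theorem, the fibered classes fill a union of top-dimensional open cones in $H^1(\widetilde{M}; \RR)$ on which both $-\chi^{(2)}(\ker \cdot)$ and $\|\cdot\|_T$ are linear, so by continuity the equality extends to all of $H^1(\widetilde{M};\RR)$. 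Finally, multiplicativity of both $-\chi^{(2)}(\ker \cdot)$ and $\|\cdot\|_T$ under the finite cover $p$ descends the equality back to $M$.

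The main obstacle will be controlling non-fibered classes in the hyperbolic setting: even when $\widetilde{M}$ fibers over $S^1$, an arbitrary $\phi$ need not pull back to a fibered class of $\widetilde{M}$. The cleanest way around this is the polytope-level identification $\mathcal{P}(M) = B_T(M)$ (an analogue of \cref{thm:L2Euler}), which records the equality for all $\phi$ simultaneously and converts a potentially intricate case analysis into a single finite-dimensional polytope statement. Once this identification is established, the stated equality $-\chi^{(2)}(\ker\phi) = c(\phi)$ is just the comparison of the thickness functions of $\mathcal{P}(M)$ and $B_T(M)$ in the $\phi$-direction.
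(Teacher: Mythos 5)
First, a point of orientation: the paper does not prove this statement — it is quoted from Friedl--L\"{u}ck \cite{FriedlLueck2019} purely as a point of comparison with \cref{thm:L2Euler} — so your attempt can only be measured against the argument in the cited source, not against anything internal to this paper.

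Your outline assembles the right circle of ideas (universal $L^2$-torsion, the thickness formula $\mathrm{th}_{\phi}\mathcal{P}(M) = -\chi^{(2)}(\ker\phi)$, virtual fibering), but the decisive step is missing, and you concede as much: for non-fibered classes you propose to fall back on the identification $\mathcal{P}(M)=B_T(M)$, which is not an available black box but is essentially the statement to be proved — invoking it here is circular. The actual Friedl--L\"{u}ck argument is asymmetric. The inequality $-\chi^{(2)}(\ker\phi)\le c(\phi)$ is proved directly, with no fibering input: cut $M$ along a norm-minimizing surface dual to $\phi$, write the infinite cyclic cover as an ascending union of compact pieces, and compute $b_1^{(2)}(\ker\phi)$ by a Mayer--Vietoris/colimit argument — this is precisely the template that \cref{prop_L2Betti} of the present paper transplants to graphs of groups. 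The reverse inequality is where virtual fibering enters, but in a stronger form than you use: Agol's RFRS theorem gives that \emph{every} class $\phi$ becomes, in a finite cover chosen depending on $\phi$, a limit of fibered classes (quasi-fibered), and one then descends using Gabai's theorem that the Thurston norm is multiplicative under finite covers. Your ``fibered cones plus continuity'' step only reaches classes in the closure of the fibered cones of the one fixed cover, which excludes the generic non-fibered class. Two smaller inaccuracies: the Atiyah conjecture for $\pi_1(M)$ does not follow from virtual specialness alone (closed graph manifolds need not be virtually special; the geometric pieces are handled separately), and additivity of the Thurston norm along JSJ tori is neither a theorem of Thurston nor the mechanism of the reduction — the decomposition is exploited at the level of the universal $L^2$-torsion via its sum formula for amalgamations, in the spirit of \cref{lemma_sum_polytopes}.
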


\subsection{Extensions to other classes of groups}

As discussed above, if $G$ is incoherent and if there exists an epimorphism $\phi \colon G \to \Z$ whose kernel is a witness to incoherence then $c(G; \phi) = \infty$. We say a group $G$ is \emph{strongly coherent} if any finitely generated subgroup is of type $\FP$.

Thus, a natural question arises:

\begin{problem}\label{problem}
    Determine the classes of strongly coherent groups for which the function
    \[ \begin{split} H^1(G; \Z) \to \RR \\
    \phi \mapsto c(G; \phi) 
    \end{split}\]
    extends to a semi-norm on $H^1(G; \RR)$. Here, $c(G; \phi)$ denotes the splitting complexity of $G$ along $\phi$.
\end{problem}

We will now briefly discuss some classes of groups for which progress on \cref{problem} has been made.

Let $G$ be a torsion-free 2-generator 1-relator group such that $G \not\cong \mathbb{F}_2$ By the recent work of Jaikin--Linton, all such groups are strongly coherent \cite{JaikinLinton2023}. Moreover, any such group is $L^2$-acyclic, of type $\F$ and satisfies the Atiyah conjecture by \cite{JaikinZapirainLopezAlvarez2020}. Thus, the $L^2$-polytope $\mathcal{P}(G)$ is defined and is an invariant of the group. Moreover, it can be explicitly calculated from a \emph{nice} 2-generator 1-relator presentation of $G$ (see \cite{FriedlTilmann2020} and the discussion in Section~3.2 of \cite{FriedlLuckTillmann2019}). In particular, the $L^2$-polytope $\mathcal{P}(G)$ is a single polytope. 

\begin{theorem}[Friedl--L\"{u}ck--Tillmann {\cite[Theorem~5.2]{FriedlLuckTillmann2019}},  Henneke--Kielak {\cite[Theorem~6.4]{HennekeKielak2020}}]
    Let $G$ be a torsion-free 2-generator 1-relator group such that $G \not \cong \mathbb{F}_2$. Let $\phi \colon G \to \Z$ be an epimorphism. Then, 
    \[c(G; \phi) = - \chi^{(2)}(\ker \phi) = \mathrm{th}_{\phi} \mathcal{P}(G).\]
\end{theorem}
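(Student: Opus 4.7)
The plan is to establish the two equalities by independent arguments, both resting on the $L^2$-polytope formalism introduced earlier in the paper. The identity $\mathrm{th}_\phi \mathcal{P}(G) = -\chi^{(2)}(\ker\phi)$ follows immediately from general theory: $G$ is $L^2$-acyclic, of type $\F$, and satisfies the Atiyah conjecture (by \cite{JaikinZapirainLopezAlvarez2020}), so $\mathcal{P}(G)$ is defined and represented by a single polytope, and the thickness/Euler-characteristic identity recalled in the introduction applies verbatim. The real content is then the equality $c(G;\phi) = -\chi^{(2)}(\ker\phi)$.

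The plan for this equality is to reduce it to an explicit computation of $\mathcal{P}(G)$. Using the nice 2-generator 1-relator presentation of \cite{FriedlTilmann2020}, $\mathcal{P}(G)$ is a single Newton-type polytope built from Fox derivatives of the relator. After Nielsen transformations one may assume $G = \langle a, b \mid r \rangle$ with $\phi(a) = 0$ and $\phi(b) = 1$; setting $N := M - m$ where $M, m$ are the maximum and minimum $b$-exponents in the cyclically reduced word $r$, one reads off $\mathrm{th}_\phi \mathcal{P}(G) = N - 1$.

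For the upper bound $c(G;\phi) \leq N - 1$, I would write down the classical Magnus-style HNN splitting dual to $\phi$. Setting $a_i := b^i a b^{-i}$ and rewriting $r$ as a word $r'$ in $\{a_m, \ldots, a_M\}$, one obtains
\[G \;\cong\; \bigl\langle a_m, \ldots, a_M \mid r' \bigr\rangle \ast_{F_N}\]
with stable letter $b$, vertex group a 1-relator group of type $\F$, and edge group $F_N$ freely generated by $\{a_m, \ldots, a_{M-1}\}$, embedded twice via the identity and via the shift $a_i \mapsto a_{i+1}$. This splitting is dual to $\phi$ and has edge complexity $-\chi(F_N) = N - 1$, realizing the upper bound.

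For the lower bound $c(G;\phi) \geq N - 1$, the main obstacle, the plan is to exploit additivity of the universal $L^2$-torsion under graph-of-groups decompositions. Any dual splitting with $\FP$ vertex and edge groups expresses $\rho^{(2)}_u(G)$ as an alternating combination involving the universal $L^2$-torsions of vertex and edge groups together with stable-letter contributions. Pushing through the polytope homomorphism $\mathcal{P}_T$, the $\phi$-thickness is carried entirely by the stable-letter/edge data, since vertex- and edge-group polytopes sit inside the hyperplane $\phi = 0$. One then argues that the $\phi$-thickness contributed per edge is bounded above by $-\chi^{(2)}$ of the corresponding edge group, yielding
\[\mathrm{th}_\phi \mathcal{P}(G) \;\leq\; \sum_e \bigl(-\chi^{(2)}(E_e)\bigr) \;=\; c(\text{splitting}),\]
and taking the infimum over splittings concludes. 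Making this polytope-level additivity precise in the Grothendieck group $\mathfrak{P}_T(G_{\mathrm{fab}})$ is the delicate step, and is the substance of \cite[Theorem~6.4]{HennekeKielak2020}.
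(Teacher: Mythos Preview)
The paper does not supply its own proof of this theorem: it is stated in the introduction as a citation of results from \cite[Theorem~5.2]{FriedlLuckTillmann2019} and \cite[Theorem~6.4]{HennekeKielak2020}, and is used only to motivate the discussion of extensions to other classes of groups. There is therefore nothing in the paper to compare your proposal against.

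That said, your outline is broadly faithful to the structure of the cited arguments. The equality $\mathrm{th}_\phi\mathcal{P}(G) = -\chi^{(2)}(\ker\phi)$ is exactly \cref{prop:L2Euler} (Funke) once one has checked the hypotheses, which you do correctly. The upper bound $c(G;\phi)\leq N-1$ via the Magnus rewriting is the classical construction and your description is accurate; note only that you should verify the vertex group is of type $\FP$ so that the splitting is of finite type in the sense of \cref{def:complexity_of_Gtree}, which holds since one-relator groups are of type $\F$. For the lower bound, your sketch captures the right idea---additivity of universal $L^2$-torsion under graph-of-groups decompositions, pushed through the polytope homomorphism---but the step ``the $\phi$-thickness contributed per edge is bounded above by $-\chi^{(2)}$ of the corresponding edge group'' is where the real work lies, and your proposal does not indicate how to carry it out beyond pointing to \cite{HennekeKielak2020}. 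In particular, one must control the edge-group contributions in $\mathfrak{P}_T(G_{\mathrm{fab}})$ without knowing a priori that the edge groups are themselves $L^2$-acyclic or that their polytopes are single; Henneke--Kielak handle this via an agrarian/Dieudonn\'e-determinant argument that is not visible in your sketch. As a summary of the literature your proposal is fine, but it is not a self-contained proof.
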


\begin{corollary}
    If $G$ is a torsion-free 2-generator 1-relator group such that $G \not \cong \mathbb{F}_2$ then the conclusions of \cref{thm_main} hold for $G$.
\end{corollary}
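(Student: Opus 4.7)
The plan is to define $\|\cdot\|_T := \mathrm{th}_{\bullet}\mathcal{P}(G)$, the thickness map along the $L^2$-polytope of $G$, and then deduce the three conclusions of \cref{thm_main} directly from the Friedl--L\"uck--Tillmann / Henneke--Kielak theorem cited just above.

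I would first verify the set-up. The preceding discussion records that a torsion-free 2-generator 1-relator group $G \not\cong \mathbb{F}_2$ is $L^2$-acyclic, of type $\F$, and satisfies the Atiyah conjecture, and moreover that $\mathcal{P}(G)$ is a well-defined group invariant and is represented by a \emph{single} polytope. Because the polytope is single, the thickness function $\mathrm{th}_{\bullet}\mathcal{P}(G)$ is a genuine semi-norm on $H^1(G;\RR)$ --- continuous, convex, and linear on rays through the origin --- which is conclusion (1) of \cref{thm_main}.

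For conclusion (2), let $\phi \in H^1(G;\Z)$ be non-trivial and write $\phi = n\psi$ for some $n \in \Z_{>0}$ and an epimorphism $\psi \colon G \to \Z$. Both the thickness map (by linearity on rays) and the splitting complexity (directly from \cref{def:splitting_complexity}) are homogeneous under such rescaling, so the identity $\|\phi\|_T = c(G;\phi)$ reduces to the case $\phi = \psi$, which is exactly the cited theorem.

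For conclusion (3), let $\phi \colon G \to \Z$ be algebraically fibered with $K := \ker\phi$ finitely generated. The cited theorem supplies $\|\phi\|_T = -\chi^{(2)}(K)$, and it remains to identify this (modulo sign conventions) with $\chi(K)$ via the standard equality $\chi^{(2)}(H) = \chi(H)$ valid for groups $H$ of type $\FP$. The one genuinely non-trivial step is therefore to show that finite generation of $K$ already forces $K$ to be of type $\FP$ in this setting. I expect this to be the main obstacle: it does not follow purely from $G$ being of type $\F$, and must instead be extracted from finer structural results for torsion-free 2-generator 1-relator groups --- for instance from the explicit description of the BNSR invariants of one-relator groups, or from classical ``finite generation implies finite presentation'' type results for kernels of maps from one-relator groups to $\Z$ (so that $K$ is then at most two-dimensional and necessarily of type $\F$). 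Once $K$ is of type $\FP$, conclusion (3) is immediate.
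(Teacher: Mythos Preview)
The paper states this corollary without proof, treating it as immediate from the Friedl--L\"uck--Tillmann / Henneke--Kielak theorem quoted just above; your approach is exactly what is intended, and your handling of conclusions (1) and (2) is correct (the homogeneity of $c(G;\cdot)$ used in (2) is precisely \cref{lemma:multiplicative}).

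You have in fact gone further than the paper by isolating the one point it glosses over: conclusion (3) requires $K = \ker\phi$ to be of type $\FP$ so that $\chi^{(2)}(K) = \chi(K)$. You are right that this does not follow merely from $G$ being of type $\F$, nor even from $G$ having a $2$-dimensional classifying space --- the Stallings kernel in $F_2 \times F_2$ is finitely generated but not $\FP_2$ --- so some input specific to one-relator groups is genuinely needed, exactly as you anticipated. Your suggested routes via the BNSR invariants or classical one-relator structure theory are on target: the Friedl--Tillmann and Henneke--Kielak papers cited here show that for torsion-free $2$-generator $1$-relator groups the polytope governs all the $\Sigma$-invariants simultaneously, so that finite generation of $K$ already forces $K$ to be of type $\F$.
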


Finally, let us briefly discuss the class of \emph{free-by-cyclic} groups. We say that a group $G$ is free-by-cyclic if it contains a normal subgroup $N$ which is isomorphic to a free group of finite rank and such that $G / N \cong \Z$. A free-by-cyclic group admits a finite classifying space given by the mapping torus of a realisation of the monodromy map. Free-by-cyclic groups are $L^2$-acyclic \cite[Theorem~1.39]{Lueck2002} and satisfy the Atiyah conjecture \cite[Theorem~1.3]{Linnell1993}. Thus, the $L^2$-polytope of $G$ is well defined and is a single polytope by the proof of \cite[Theorem~5.29]{Kielak2020} (in fact, it was already shown in \cite[Corollary~3.5]{FunkeKielak2018} that the thickness function $\mathrm{th}_{\bullet}\mathcal{P}(G) \colon H^1(G; \RR) \to \RR$ is a semi-norm). 

Hence, in order to prove that free-by-cyclic groups admit a Thurston semi-norm, it suffices to prove the following:

\begin{problem}
    Let $G$ be a free-by-cyclic group and let $\phi \colon  G\to \Z$ be an epimorphism. Show that 
     \[ -\chi^{(2)}(\ker \phi) = c(G; \phi).\]
\end{problem} A variation of this problem has been posed by Gardam--Kielak in \cite{GardamKielak, Oberwolfach}. See also \cite{KielakSun2024} for related work.

\subsection*{Acknowledgments}

The author would like to thank Dawid Kielak for introducing her to the work of Friedl--L\"{u}ck and Friedl--Tillmann, as well as his own work on polytopes which inspired this paper. She would also like to thank Dawid Kielak for comments on an earlier draft. Finally, the author thanks the anonymous referee for useful comments. 

\section{Preliminaries}

\subsection{Ring theory}

Every ring in this article is assumed to be associative and unital.

\begin{definition}[Ore condition] \label{definition_Ore}
    Let $R$ be a ring and $S \subseteq R$ a multiplicative subset. The ring $R$ satisfies the \emph{right Ore condition for $S$} if for any $r \in R$ and $s \in S$ there exists some $u \in R$ and $v \in S$ such that $rv = su$. A ring $R$ is a \emph{right Ore domain} if $R$ is a domain and satisfies the right Ore condition with respect to $R - \{0\}$.
\end{definition}

Suppose that $R$ satisfies the right Ore condition with respect to $S$. We define an equivalence relation $\sim$ on the Cartesian product $R \times S$ so that $(r,s) \sim (r',s')$ if and only if there exist elements $t, t' \in R$ such that $rt = r't'$ and $st = s't'$. The \emph{Ore localisation of $R$ with respect to $S$} is the set of equivalence classes \[\mathrm{Ore}(R,S) := \left.(R \times S) \middle/ \sim \right. .\]
When $R$ is an Ore domain and $S = R - \{0\}$, we simply write \[\mathrm{Ore}(R) := \mathrm{Ore}(R, R - \{0\}) .\]

\begin{definition}
    A ring $R$ is a \emph{division ring} if for any $r \in R - \{0\}$ there exists $s \in R$ such that $sr = 1$ and $rs = 1$. Note that if such an element $s$ exists then it is unique. We call such $s$ the \emph{inverse} of $r$ and denote it by the symbol $r^{-1}$.
\end{definition}

If $R$ is a division ring and $S$ is a subset of $R$, then we define the \emph{division closure} of $S$ in $R$, denoted by $\bar{S}$, to be the intersection of all division closed subrings of $R$ that contain $S$. 

\begin{lemma}\label{lemma:oreDomainSkew}
    If $R$ is an Ore domain then the Ore localisation $\mathrm{Ore}(R)$ is a division ring.
\end{lemma}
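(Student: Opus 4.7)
The plan is to equip the set $\mathrm{Ore}(R)$ with a ring structure and then verify that every non-zero element admits a two-sided inverse. First I would check that the prescribed relation $\sim$ is genuinely an equivalence relation on $R \times S$, where $S = R \setminus \{0\}$. Reflexivity and symmetry are immediate; transitivity requires producing a single common multiplier that ties together two chains of equivalences, and this is supplied by one invocation of the right Ore condition (where the domain hypothesis ensures that the multipliers so obtained do not collapse to zero).

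Next I would define addition and multiplication via common denominators. For addition, apply the Ore condition to $s_2 \in R$ and $s_1 \in S$ to obtain $u \in R$ and $v \in S$ with $s_2 v = s_1 u$, and set
\[ [r_1,s_1] + [r_2,s_2] := [r_1 u + r_2 v,\, s_1 u]. \]
For multiplication, apply the Ore condition to $r_2 \in R$ and $s_1 \in S$ to obtain $a \in R$ and $b \in S$ with $r_2 b = s_1 a$, and set
\[ [r_1,s_1] \cdot [r_2,s_2] := [r_1 a,\, s_2 b]. \]
In each case the second coordinate lies in $S$ because $R$ is a domain. The substantive step is then to verify that these formulas are independent of (i) the choice of representatives of the two input classes and (ii) the choice of Ore multipliers, and that the resulting operations satisfy the ring axioms with additive identity $[0,1]$ and multiplicative identity $[1,1]$. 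Each such verification reduces to repeated application of the Ore condition to bring intermediate expressions over a common denominator; this is routine but lengthy, and constitutes the main obstacle of the proof.

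Finally, to exhibit inverses, suppose $[r,s] \neq [0,1]$. Since $R$ is a domain, this forces $r \neq 0$, so $r \in S$ and the pair $(s,r)$ represents a legitimate element of $\mathrm{Ore}(R)$. Unwinding the definition of multiplication then yields $[r,s] \cdot [s,r] = [1,1] = [s,r] \cdot [r,s]$ (both reduce to expressions of the form $[x,x]$ with $x \in S$, which are equivalent to $[1,1]$), so $[s,r]$ is the required two-sided inverse and $\mathrm{Ore}(R)$ is a division ring.
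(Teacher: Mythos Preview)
The paper does not supply a proof of this lemma; it is recorded as a standard fact from non-commutative ring theory, and when the statement is invoked again just before \cref{lemma_polytope_hom} the paper simply cites Cohn's \emph{Skew Field Constructions}. Your outline is the standard argument and is correct: once the ring structure on $\mathrm{Ore}(R)$ is in place, the domain hypothesis guarantees that a non-zero class $[r,s]$ has $r \in S$, and then $[s,r]$ is visibly a two-sided inverse. There is nothing further to compare.
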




Let $\pi \colon G \twoheadrightarrow H$ be a group epimorphism with kernel $K$. Fix a set-theoretic section $s \colon H \to G$ of $\pi$. Let $\Z K \ast H$ be the set of all finite linear combinations $\sum_{h\in H} \lambda_h\ \ast h$ of elements in $H$ with coefficients in the group ring $\Z K$.

We define a ring structure on the set $\Z K \ast H$, as follows. 
Addition is defined to be pointwise. Multiplication in $\Z K \ast H$ is defined by linearly extending the operation 
\[ (\lambda_g \ast g) \cdot (\lambda_h \ast h) := \lambda_g \mathrm{ad}_{s(g)}(\lambda_h) s(g) s(h) s(gh)^{-1} \ast gh,\]
where for any $x \in G$, \[\mathrm{ad}_x \colon \Z K \to \Z K\]
is the function defined by linearly extending the conjugation action by $x$ on $K$, $k \mapsto x k x^{-1}$ for all $k \in K$.

 We write $\Z K \ast_s H$ to denote the resulting ring. 

\begin{lemma}\label{lemma:twisted_group_ring}
    The twisted group ring $\Z K \ast_s H$ is isomorphic to the (untwisted) group ring $\Z G$ via the map $\Z G \to \Z K \ast_s H$ defined by linearly extending
    \[ g \mapsto g \cdot ((s \circ \pi ) (g))^{-1} \ast \pi(g). \]
\end{lemma}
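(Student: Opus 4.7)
The plan is to verify that the prescribed assignment is a bijective ring homomorphism by exhibiting an explicit two-sided inverse and by checking multiplicativity directly from the twisted multiplication rule. Denote the candidate map by $\Phi \colon \Z G \to \Z K \ast_s H$, and for each $g \in G$ write $\lambda_g := g \cdot s(\pi(g))^{-1}$. The first step is to observe that $\lambda_g$ genuinely lies in $K$: applying $\pi$ gives $\pi(g)\pi(g)^{-1} = 1$, so $\Phi(g) = \lambda_g \ast \pi(g)$ is a legitimate basis element of the twisted ring and $\Phi$ extends unambiguously by $\Z$-linearity. Additivity is then automatic.

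For multiplicativity, I would compute $\Phi(g_1) \cdot \Phi(g_2)$ directly from the definition. Expanding $\lambda_{g_1} = g_1 s(\pi(g_1))^{-1}$ and noting that $\mathrm{ad}_{s(\pi(g_1))}(\lambda_{g_2}) \cdot s(\pi(g_1)) = s(\pi(g_1)) \cdot \lambda_{g_2}$, the coefficient in front of $\ast \pi(g_1 g_2)$ telescopes as
\[ g_1 s(\pi(g_1))^{-1} \cdot s(\pi(g_1)) \lambda_{g_2} \cdot s(\pi(g_2)) s(\pi(g_1 g_2))^{-1} = g_1 g_2 \cdot s(\pi(g_1 g_2))^{-1} = \lambda_{g_1 g_2}, \]
which is exactly the coefficient of $\Phi(g_1 g_2)$. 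Hence $\Phi$ respects multiplication on group elements, and so on all of $\Z G$ by bilinearity.

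For bijectivity, I would introduce the inverse candidate $\Psi \colon \Z K \ast_s H \to \Z G$ obtained by linearly extending $k \ast h \mapsto k \cdot s(h)$ for $k \in K$ and $h \in H$. Then $\Psi \Phi(g) = \lambda_g \cdot s(\pi(g)) = g$, and, for $k \in K$, $\Phi \Psi(k \ast h) = (k s(h)) \cdot s(\pi(k s(h)))^{-1} \ast \pi(k s(h)) = k s(h) s(h)^{-1} \ast h = k \ast h$. The underlying conceptual point is that $\{s(h) : h \in H\}$ is a transversal for $K$ in $G$, so every element of $G$ admits a unique factorisation as an element of $K$ times some $s(h)$; the twisted multiplication in $\Z K \ast_s H$ is designed precisely to reproduce the product in $\Z G$ under this decomposition.

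I do not anticipate a serious obstacle: the argument is essentially careful bookkeeping. The only delicate point is to keep track of the normalising factor $s(g) s(h) s(gh)^{-1}$ and the conjugation term $\mathrm{ad}_{s(g)}$ simultaneously, since together they encode exactly the failure of $s$ to be a group homomorphism, and it is this cocycle-type data that the twisted ring is designed to absorb.
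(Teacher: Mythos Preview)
Your proof is correct and complete. The paper states this lemma without proof, treating it as a standard fact about twisted group rings; your direct verification---checking that $\Phi$ is multiplicative on group elements and exhibiting the explicit inverse $\Psi(k \ast h) = k \cdot s(h)$---is precisely the expected argument, and your bookkeeping with the conjugation term and the cocycle factor $s(g)s(h)s(gh)^{-1}$ is accurate.
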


\subsection{Bass--Serre theory}

A \emph{(combinatorial) graph} $X$ consists of a tuple of sets $(V(X), E(X))$, a fixed point free involution $ - \colon E(X) \to E(X)$, and a pair of maps 
\[i, \tau \colon\ E(X) \to V(X),\]
such that $i(\bar{e}) = \tau(e)$ for every $e \in E$. An \emph{orientation} $E^{+}(X)$ of $X$ is a subset of edges $E^{+}(X) \subseteq E(X)$ such that the sets $E^{+}(X)$ and $\widebar{E^{+}(X)}$ together form a partition of the edge set $E(X)$.

A \emph{star graph} $S_n$ is the complete bipartite graph $K_{1,n}$. A \emph{rose graph} $R_n$ is the graph with a single vertex and $2n$ edges.
\medskip

A \emph{graph of groups} $\mathbb{X}$ consists of a graph $X$ and an assignment of a group $\mathbb{X}_v$ to every vertex $v \in V(X)$ and a group $\mathbb{X}_e$ to every edge $ e\in E(X)$ so that $\mathbb{X}_e = \mathbb{X}_{\bar{e}}$. For each edge $e \in E(X)$, there is a monomorphism $\iota_e \colon \mathbb{X}_e \hookrightarrow \mathbb{X}_{\tau(e)}$.

Let $\mathbb{X} = (X, \mathbb{X}_{\bullet}, \iota_{\bullet})$ be a graph of groups. Let $F(E(X))$ denote the free group with a free basis identified with the edges of $X$ and define $F_{\mathbb{X}}$ to be the free product\[ F_{\mathbb{X}} = \bigast_{v\in V(X)} \mathbb{X}_v \bigast F(E(X)). \]
Let $N \trianglelefteq F_{\mathbb{X}}$ be the normal subgroup of $F_{\mathbb{X}}$ given by the normal closure 
\[N = \llangle \{ e \bar{e}, \forall e \in E(X)) \} \cup \{ e\iota_e(a)e^{-1}(\iota_{\bar{e}}(a))^{-1}, \forall e\in E(X),\forall a\in \mathbb{X}_e  \}\rrangle.  \]
Finally, let us fix a spanning tree $\mathcal{T}$ of the graph $X$. Then the \emph{fundamental group of the graph of groups $\mathbb{X}$} with respect to the spanning tree $\mathcal{T}$ is the quotient group 
\[ \pi_1(\mathbb{X}, \mathcal{T}) =  \left(F_{\mathbb{X}} / N \right) / \llangle \{ e \in E(\mathcal{T})\} \rrangle. \]
Note that for any two different choices of spanning trees $\mathcal{T}$ and $\mathcal{T}'$ of the graph $X$, there exists an isomorphism between the corresponding fundamental groups of graphs of groups, \[\pi_1(\mathbb{X}, \mathcal{T}) \cong \pi_1(\mathbb{X}, \mathcal{T}').\] 

We say that the group $G$ admits a \emph{graph-of-groups splitting} $\mathbb{X}$ if there is an isomorphism
\[ G \cong \pi_1(\mathbb{X}, \mathcal{T}),\]
for some spanning tree $\mathcal{T}$.

A \emph{$G$-tree $T$} is a simplicial tree $T$ with an action \[ \begin{split}G &\to \mathrm{Aut}(T)  \\ g &\mapsto \sigma_g \end{split}\] which is \emph{without inversion}, meaning that $\sigma_g(e) \neq \bar{e}$ for every $e\in E(X)$.  We will often abuse notation by using the symbol $g$ to denote the automorphism $\sigma_g$ of $T$ induced by the element $g \in G$.

The \emph{quotient graph of groups} $\mathbb{X}_T$ for the $G$-tree $T$ is a graph of groups where the underlying graph $X$ is the quotient of $T$ by the $G$-action, 
\[X:= G  \backslash T.\] 
We pick a spanning tree $\mathcal{T} \subseteq X$ and a connected lift $\hat{\mathcal{T}}$ of $\mathcal{T}$ in $T$ (we call such $\hat{\mathcal{T}}$ the \emph{tree of representatives} of $T$ modulo $G$). The vertex and edge groups of the quotient graph of groups are identified with the stabilisers of the lifts of the corresponding vertices and edges of $\hat{\mathcal{T}}$. The edge inclusion maps are induced by subgroup inclusion. Let $e \in E(X)$ be an edge which is not contained in the spanning tree $\mathcal{T}$. Let $\hat{e}$ be a lift of $e$ to $T$ such that $\iota(\hat{e}) \in V(\hat{\mathcal{T}})$. We let $t_e \in G$ denote the element of $G$ such that $t_e \cdot \hat{e} \in \hat{\mathcal{T}}$. Then, there is a homomorphism \[\Phi \colon \pi_1(\mathbb{X}_T, \mathcal{T}) \to G\] which sends the vertex and edge groups of $\mathbb{X}_T$ to the corresponding subgroups of $G$, and the edges $e \in E(X \setminus T)$ to the elements $t_e$. It is a standard result from Bass--Serre Theory that $\Phi$ is an isomorphism. Moreover, for any graph-of-groups splitting 
\[ G \cong \pi_1(\mathbb{X}, \mathcal{T}),\]
there exists a $G$-tree $T$ known as the \emph{Bass--Serre tree} of the splittings, such that the quotient graph of groups is isomorphic to $\mathbb{X}$. See \cite{Serre2003} for more details.
\subsection{Finiteness properties}

Let $G$ be a group and $R$ a ring. We say that $G$ is of \emph{type $\FP$ over $R$}, denoted by $\FP (R)$, if there exists a finite resolution of the trivial $RG$-module $R$ by finitely generated $RG$-modules. A group is of \emph{type $\FP$} if it is of type $\FP(\Z)$.

\subsection{$L^2$-Betti numbers}

Let $L^2(G)$ denote the Hilbert space of square-summable formal sums over $G$ with coefficients in $\mathbb{C}$, 
\[L^2(G) = \left\{\sum_{g \in G} \lambda_g g, \lambda_g \in \mathbb{C} : \sum |\lambda_g|^2 < \infty \right\}.\]
The \emph{group von Neumann algebra} $\mathcal{N}(G)$ of $G$ is the algebra of $G$-equivariant bounded operators from $L^2(G)$ to $L^2(G)$. It has a natural $\mathcal{N}(G)$-$\mathbb{Z}G$-bimodule structure.

Let $X$ be a $G$-CW-complex and let $C_{\bullet}(X)$ be the associated cellular $\Z G$-chain complex. Define the $L^2$-homology of the $G$-CW-complex $X$ to be \[H_p^{(2)}(X; G) := H_p(  \mathcal{N}(G) \otimes_{\Z G} C_{\bullet}(X)) = \ker \left. \partial_p \big/\, \mathrm{cl}\left(\mathrm{Im}\,\partial_{p+1}\right) \right. ,\]
where $\mathrm{cl}(\mathrm{Im}\,\partial_{p+1})$ denotes the closure of $\mathrm{Im}\,\partial_{p+1}$ in the Hilbert space $\ker \partial_p$.

The $p$-th $L^2$-Betti number $b_p^{(2)}(X; G)$ of $X$ is defined to be 
\[b_p^{(2)}(X; G) := \mathrm{dim}_{\mathcal{N}(G)}(H_p^{(2)}(X;G)) \in [0, \infty].\]
Here, $\mathrm{dim}_{\mathcal{N}(G)}(\cdot)$ denotes the \emph{extended von Neumann dimension} of an $\mathcal{N}(G)$-module in the sense of L\"{u}ck (see \cite[Definition~6.20]{Lueck2002}). Note that, in particular, we do not require the $G$-CW complex $X$ to be cocompact.

The $p$-th $L^2$-Betti number of a group $G$ is defined to be 
\[b_p^{(2)}(G) := b_p^{(2)}(\tilde{X}; G)\]
where $\tilde{X}$ is the universal cover of a classifying space $X$ for $G$. A group $G$ is said to be \emph{$L^2$-acyclic} if $b_p^{(2)}(G) = 0$ for all $p \geq 0$.

\medskip

A group $G$ is said to be \emph{$L^2$-finite} if
\[ \sum_{p \geq 0} b_{p}^{(2)}(G) < \infty.\] Then, the \emph{$L^2$-Euler characteristic} of $G$ is defined to be
\[ \chi^{(2)}(G) = \sum_{p \geq 0} (-1)^p \cdot b_p^{(2)}(G).\]
\begin{lemma}[{\cite[Theorem 1.35(2)]{Lueck2002}}]\label{lemma:type_F_euler}
     If $G$ is of type $\F$ then 
\[\chi^{(2)}(G) = \chi(G).\]
\end{lemma}

\subsubsection{The Atiyah conjecture}

The elements of $G$ act as bounded operators $L^2(G) \to L^2(G)$ and hence there is a natural embedding $\mathbb{Q} G \hookrightarrow \mathcal{N}(G)$. We define the \emph{Linnell ring} $\mathcal{D}_G$ of a group $G$ to be the division closure of $\mathbb{Q}G$ in the Ore localisation $\mathrm{Ore}(\mathcal{N}(G), S)$, where $S$ is the set of elements in $\mathcal{N}(G)$ which are not zero divisors. We note that the group von Neumann algebra $\mathcal{N}(G)$ satisfies the Ore condition with respect to its set of non-zero-divisors by \cite[Theorem~8.22]{Lueck2002}.

The \emph{Atiyah conjecture} predicts the rationality of the von Neumann dimension of the kernel of any bounded operator $L^2(G)^n \to L^2(G)^m$. For the sake of brevity, we will use the following characterisation which follows from the work of Linnell:

\begin{defn_thm}[Linnell \cite{Linnell1993}]\label{def:Atiyah}
    A torsion-free group $G$ satisfies the \emph{Atiyah conjecture over $\mathbb{Q}$} if the Linnell ring $\mathcal{D}_G$ is a division ring. 
\end{defn_thm}

\begin{remark}
Some places in the literature refer to the statement of \cref{def:Atiyah} as the \emph{strong} Atiyah conjecture.
\end{remark}

\begin{theorem}[{L\"{u}ck \cite[Lemma 10.28(3)]{Lueck2002}}]\label{lueck}
    Let $G$ be a torsion-free group which satisfies the Atiyah conjecture. Then, for any $p \geq 0$ the $p$-th $L^2$-Betti number of $G$ is given by
    \[b_p^{(2)}(G) = \mathrm{rank}_{\mathcal{D}_G}H_p(G; \mathcal{D}_G).\]
\end{theorem}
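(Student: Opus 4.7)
The plan is to bridge the $\mathcal{N}(G)$-dimension of $L^2$-homology and the $\mathcal{D}_G$-dimension of $\mathcal{D}_G$-coefficient group homology by passing through the Ore localization $\mathcal{U}(G) := \mathrm{Ore}(\mathcal{N}(G), S)$, which contains both $\mathcal{N}(G)$ and, by definition, the Linnell ring $\mathcal{D}_G$. Two flatness facts drive the argument: (i) $\mathcal{U}(G)$ is flat over $\mathcal{N}(G)$, since every Ore localization is flat; (ii) $\mathcal{U}(G)$ is flat over $\mathcal{D}_G$, because the Atiyah hypothesis in \cref{def:Atiyah} forces $\mathcal{D}_G$ to be a division ring, and every module over a division ring is free.

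Concretely, I would fix a model for the classifying space $BG$ and let $C_\bullet$ be the free $\ZZ G$-resolution of $\ZZ$ coming from cellular chains on the universal cover. By definition,
\[ b_p^{(2)}(G) = \mathrm{dim}_{\mathcal{N}(G)} H_p\bigl(\mathcal{N}(G) \otimes_{\ZZ G} C_\bullet\bigr), \qquad H_p(G; \mathcal{D}_G) = H_p\bigl(\mathcal{D}_G \otimes_{\ZZ G} C_\bullet\bigr). \]
Using the two flatness facts above, tensoring with $\mathcal{U}(G)$ commutes with passage to homology on both sides, and the standard base-change identity $\mathcal{U}(G) \otimes_{\mathcal{N}(G)} (\mathcal{N}(G) \otimes_{\ZZ G} C_\bullet) \cong \mathcal{U}(G) \otimes_{\ZZ G} C_\bullet$ (and similarly over $\mathcal{D}_G$) then produces the same $\mathcal{U}(G)$-module from both constructions:
\[ \mathcal{U}(G) \otimes_{\mathcal{N}(G)} H_p\bigl(\mathcal{N}(G) \otimes_{\ZZ G} C_\bullet\bigr) \,\cong\, H_p\bigl(\mathcal{U}(G) \otimes_{\ZZ G} C_\bullet\bigr) \,\cong\, \mathcal{U}(G) \otimes_{\mathcal{D}_G} H_p(G; \mathcal{D}_G). \]

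The remaining step, and what I expect to be the main obstacle, is to invoke compatibility of L\"uck's extended dimension under both base changes: for every $\mathcal{N}(G)$-module $M$ and every $\mathcal{D}_G$-module $N$,
\[ \mathrm{dim}_{\mathcal{N}(G)} M = \mathrm{dim}_{\mathcal{U}(G)}\bigl(\mathcal{U}(G) \otimes_{\mathcal{N}(G)} M\bigr), \qquad \mathrm{dim}_{\mathcal{D}_G} N = \mathrm{dim}_{\mathcal{U}(G)}\bigl(\mathcal{U}(G) \otimes_{\mathcal{D}_G} N\bigr). \]
The first is a standard consequence of Ore-flatness proved in Chapter~8 of \cite{Lueck2002}. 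For the second, the division-ring hypothesis on $\mathcal{D}_G$ is essential: I would verify it first for finitely generated free modules $N \cong \mathcal{D}_G^n$, where $\mathcal{U}(G) \otimes_{\mathcal{D}_G} \mathcal{D}_G^n \cong \mathcal{U}(G)^n$ has $\mathcal{U}(G)$-dimension $n = \mathrm{dim}_{\mathcal{D}_G}\mathcal{D}_G^n$, and then bootstrap to arbitrary $\mathcal{D}_G$-vector spaces by writing them as filtered colimits of their finite-dimensional subspaces and invoking the cofinality property of the extended dimension. Combining these two identities with the isomorphism above yields $b_p^{(2)}(G) = \mathrm{dim}_{\mathcal{D}_G} H_p(G; \mathcal{D}_G)$, as required.
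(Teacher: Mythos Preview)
The paper does not supply its own proof of this statement; it is quoted as a black box from L\"uck's book. Your sketch is essentially the standard argument found there: pass to the algebra $\mathcal{U}(G)$ of affiliated operators, use flatness of $\mathcal{U}(G)$ over $\mathcal{N}(G)$ (Ore localisation) and over the skew field $\mathcal{D}_G$, and invoke the compatibility of the extended dimension with these base changes. So your proposal is correct and aligned with the cited source; there is nothing in the paper itself to compare against.
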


\section{Polytopes}

\subsection{The polytope group}

Let $H$ be a free abelian group of finite rank $n$. Define $V := H \otimes_{\Z} \RR$ and identify $H$ with the standard integer lattice in $V$. A \emph{polytope} $P$ in $V$ is defined to be the convex hull of finitely many points in $V$. A polytope is said to be \emph{integral} if its vertices are contained in the standard integer lattice. 

Let $P \subseteq V$ be a polytope and $\phi \colon V \to \RR$ a linear map. The \emph{thickness of $P$ along $\phi$} is 
\[\thi_{\phi}(P) := \sup\{|\phi(p) - \phi(q)| : p,q \in P\}.\]

\medskip

The \emph{Minkowski sum} of two polytopes $P,Q \subseteq V$ is defined to be the polytope
\[P + Q := \{p + q \mid p\in P, q\in Q\}.\]
The set of non-empty integral polytopes forms an abelian monoid with respect to the Minkowski sum. We define an equivalence relation on pairs of non-empty integral polytopes such that  $(P_1, Q_1) \sim (P_2, Q_2)$ whenever $P_1 + Q_2 = P_2 + Q_1$. Let $\mathfrak{P}(H)$ be the resulting set of equivalence classes. Then $\mathfrak{P}(H)$ is an abelian group with respect to the operation + defined as
\[[(P_1, Q_1)] + [(P_2, Q_2)] := [(P_1 + P_2, Q_1 + Q_2)].\] We call $\mathfrak{P}(H)$ the \emph{Grothendieck group of polytopes} in $H \otimes_{\Z} \RR$, or the \emph{polytope group}, for short. An element of $\mathfrak{P}(H)$ is said to be a \emph{single polytope} if it is represented by a pair $(P,Q)$ where $Q$ is a singleton.

Let $\mathfrak{C}(H)$ denote the subgroup of $\mathfrak{P}(H)$ which consists of elements represented by pairs $(P,Q)$ where $P$ and $Q$ are both singletons. We define the \emph{translation-invariant polytope group} $\mathfrak{P}_T(H)$ to be the quotient
\[ \mathfrak{P}_T(H) := \left.\mathfrak{P}(H) \middle/ \mathfrak{C}(H)\right. \]
An element of $\mathfrak{P}_T(H)$ is said to be a \emph{single polytope} if it admits a single polytope representative in $\mathfrak{P}(H)$.

\medskip

Let $G$ be a finitely generated group. We define $\mathfrak{P}_T(G)$ to be the translation-invariant polytope group $\mathfrak{P}_T(G_{\fab})$ where $G_{\fab}$ is the free abelianisation of $G$. The assignment $G \mapsto \mathfrak{P}_T(G)$ defines a covariant functor from the category of finitely generated groups to the category of abelian groups. Given a group homomorphism $\psi \colon G_1 \to G_2$, we write $\psi_{*} \colon \mathfrak{P}_T(G_1) \to \mathfrak{P}_T(G_2)$ to denote the induced homomorphism of translation-invariant polytope groups.

For any element $\mathcal{P} \in \mathfrak{P}_T(H)$ which is represented by a tuple $(P,Q)$ and any linear map $\phi \colon V \to \RR$, the \emph{thickness of $\mathcal{P}$ along $\phi$} is defined as 
\[\thi_{\phi}(\mathcal{P}) := \thi_{\phi}(P) - \thi_{\phi}(Q).\]
It is easy to check that this is well-defined and gives rise to a homomorphism
\[\thi_{\phi} \colon \mathfrak{P}_T(H) \to \RR\]

The following lemma is elementary.

\begin{lemma}\label{lemma:polytope_semi-norm}
    Let $H$ be a finite rank free abelian group and let $\mathcal{P} \in \mathfrak{P}_T(H)$ be a single polytope. 
    Then the map 
    \[\begin{split}
        \thi_{\bullet}(\mathcal{P}) \colon \mathrm{Hom}(H; \RR) &\to \RR \\
        \phi \mapsto \thi_{\phi}(\mathcal{P})
    \end{split}\]
    defines a semi-norm on $\mathrm{Hom}(H; \RR)$.
\end{lemma}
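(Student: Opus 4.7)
The plan is to reduce the statement to showing that for a single polytope $P \subseteq V = H \otimes_\ZZ \RR$, the assignment $\phi \mapsto \sup\{|\phi(p) - \phi(q)| : p,q \in P\}$ is a semi-norm on $\mathrm{Hom}(H;\RR) = \mathrm{Hom}(V;\RR)$. Since $\mathcal{P}$ is a single polytope in $\mathfrak{P}_T(H)$, I would begin by lifting it to a representative $(P, \{q\}) \in \mathfrak{P}(H)$ and observing that $\thi_\phi(\{q\}) = 0$, so $\thi_\phi(\mathcal{P}) = \thi_\phi(P)$ for every linear $\phi$. The paper already remarks that $\thi_\phi$ is well-defined on $\mathfrak{P}_T(H)$, so I do not need to re-verify independence of the chosen representative; if desired, it is an immediate consequence of the translation-invariance of $\thi_\phi(P)$.

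Next I would verify the three semi-norm axioms directly from the formula $\thi_\phi(P) = \sup\{|\phi(p) - \phi(q)| : p,q \in P\}$:
\begin{enumerate}
    \item \textbf{Non-negativity.} The supremum of a set of non-negative reals is non-negative (and finite because $P$ is the convex hull of a finite set, so $\phi$ attains its max and min on the vertices of $P$).
    \item \textbf{Positive homogeneity.} For $\lambda \in \RR$, pulling $|\lambda|$ out of the supremum gives
    \[\thi_{\lambda \phi}(P) = \sup_{p,q \in P}|\lambda|\,|\phi(p) - \phi(q)| = |\lambda|\,\thi_\phi(P).\]
    \item \textbf{Triangle inequality.} For any $p, q \in P$, the scalar triangle inequality yields
    \[|(\phi+\psi)(p) - (\phi+\psi)(q)| \leq |\phi(p) - \phi(q)| + |\psi(p) - \psi(q)|,\]
    and taking suprema (using $\sup(f+g) \leq \sup f + \sup g$) gives $\thi_{\phi+\psi}(P) \leq \thi_\phi(P) + \thi_\psi(P)$.
\end{enumerate}

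Since all three properties hold, $\thi_\bullet(\mathcal{P})$ is a semi-norm on $\mathrm{Hom}(H;\RR)$. There is no real obstacle here; the only subtlety worth flagging is the importance of the hypothesis that $\mathcal{P}$ is a \emph{single} polytope, because for an arbitrary class $[(P,Q)] \in \mathfrak{P}_T(H)$ the formal difference $\thi_\phi(P) - \thi_\phi(Q)$ need not be non-negative, so the non-negativity axiom would fail in general. This is precisely why the single-polytope hypothesis is necessary in the statement.
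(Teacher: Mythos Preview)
Your proof is correct. The paper does not give a proof of this lemma, stating only that it is elementary; your direct verification of the semi-norm axioms from the formula $\thi_\phi(P) = \sup\{|\phi(p)-\phi(q)| : p,q\in P\}$ is exactly the kind of elementary argument intended.
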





\subsection{The polytope homomorphism}

As before, let $H$ be a finite rank free abelian group and let $V = H \otimes_{\Z} \RR$. Suppose that $R$ is a ring and let $RH$ be a (possibly twisted) group ring with no zero divisors. Let $x\in RH$ be a non-zero element. Then $x$ is a formal sum
\[x = \sum_{h\in H} x_h h\]
with at most finitely many $x_h\in R$ non-zero. 

The $\mathcal{P}(x)$ of an element $x \in RH$ is defined to be the convex hull 
\[\mathcal{P}(x) := \mathrm{hull}\{h \mid x_h \neq 0\} \subseteq V.\]
We define $\mathcal{P}(0)$ to be the empty set.

Suppose that $RH$ is an Ore domain and let $\mathrm{Ore}(RH)$ be its Ore localisation. Then, for any $z \in \mathrm{Ore}(RH)$, we can realise $z$ as $z = xy^{-1}$ for some $x, y \in R H$. We define 
\[\mathcal{P}(z) = \mathcal{P}(xy^{-1}) := \mathcal{P}(x) - \mathcal{P}(y) \in \mathfrak{P}(H).\] Note that since $RH$ is an Ore domain, the Ore localisation $\mathrm{Ore}(RH)$ is a division ring \cite[Corollary~1.3.3]{Cohn1977}.

\begin{lemma}[{\cite[Lemma~3.12]{Kielak2020}}]\label{lemma_polytope_hom}
The map $\mathcal{P}$ is well-defined and induces a homomorphism
    \[\mathcal{P}: \mathrm{Ore}(RH)^{\times}_{\mathrm{ab}} \to \mathfrak{P}(H).\]
\end{lemma}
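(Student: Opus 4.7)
The plan is to prove the lemma in three steps: (i) establish a multiplicativity statement for the Newton polytope on $RH$; (ii) use this together with the right Ore condition to verify that $\mathcal{P}$ is well-defined on $\mathrm{Ore}(RH)^{\times}$; and (iii) derive the homomorphism property, which since $\mathfrak{P}(H)$ is abelian will automatically pass to the abelianisation $\mathrm{Ore}(RH)^{\times}_{\mathrm{ab}}$.

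Step (i) is the key technical ingredient: I want to show $\mathcal{P}(xy) = \mathcal{P}(x) + \mathcal{P}(y)$ for every pair of nonzero $x, y \in RH$. The inclusion $\mathcal{P}(xy) \subseteq \mathcal{P}(x) + \mathcal{P}(y)$ follows from $\mathrm{supp}(xy) \subseteq \mathrm{supp}(x) + \mathrm{supp}(y)$, a set-theoretic fact unaffected by the twisting. For the reverse inclusion I would invoke the classical fact that every vertex $v$ of the Minkowski sum admits a \emph{unique} decomposition $v = v_x + v_y$ with $v_x, v_y$ vertices of $\mathcal{P}(x), \mathcal{P}(y)$ respectively. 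Consequently the coefficient of $v$ in $xy$ is a single product of the form $x_{v_x} \cdot \sigma_{v_x}(y_{v_y}) \cdot c(v_x, v_y)$, where $\sigma$ and $c$ are the automorphism and cocycle data of the twisting. Since $R$ inherits the no-zero-divisor property from $RH$ and the cocycle $c$ takes values in units of $R$, this product is nonzero, so $v \in \mathcal{P}(xy)$.

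For step (ii), suppose $xy^{-1} = x'y'^{-1}$ in $\mathrm{Ore}(RH)$. Applying the right Ore condition to $y$ and $y'$ produces nonzero $a, b \in RH$ with $ya = y'b$, and hence $xa = (xy^{-1})(ya) = (x'y'^{-1})(y'b) = x'b$. Feeding each of $xa = x'b$ and $ya = y'b$ into the multiplicativity statement of step (i) and subtracting the two resulting identities in $\mathfrak{P}(H)$ gives $\mathcal{P}(x) - \mathcal{P}(y) = \mathcal{P}(x') - \mathcal{P}(y')$, as required.

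For step (iii), given $z_i = x_i y_i^{-1}$, apply the right Ore condition to $x_2$ and $y_1$ to obtain $u \in RH$ and nonzero $v \in RH$ with $x_2 v = y_1 u$. Then $z_1 z_2 = x_1 y_1^{-1} x_2 y_2^{-1} = (x_1 u)(y_2 v)^{-1}$, and multiplicativity together with the identity $\mathcal{P}(x_2) + \mathcal{P}(v) = \mathcal{P}(y_1) + \mathcal{P}(u)$ yields $\mathcal{P}(z_1 z_2) = \mathcal{P}(z_1) + \mathcal{P}(z_2)$. Since $\mathfrak{P}(H)$ is abelian, the resulting group homomorphism $\mathrm{Ore}(RH)^{\times} \to \mathfrak{P}(H)$ factors through the abelianisation by the universal property. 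The main obstacle is step (i): the unique-vertex-decomposition in a Minkowski sum is classical, but one must take care in the twisted setting that the cocycle values lie in the units of $R$, so that non-vanishing of the single monomial at an extremal vertex is preserved.
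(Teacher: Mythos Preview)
The paper does not supply its own proof of this lemma: it is quoted verbatim from \cite[Lemma~3.12]{Kielak2020} and used as a black box. Your three-step argument is the standard one and is correct, so there is nothing to compare against in the present paper.

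One small simplification in step~(i): you do not need to invoke separately that $R$ is a domain and that the cocycle takes unit values. The hypothesis is that $RH$ itself has no zero divisors. At an extremal vertex $v = v_x + v_y$ of $\mathcal{P}(x)+\mathcal{P}(y)$, the coefficient of $v$ in $xy$ equals the coefficient of $v$ in the product of the single monomials $(x_{v_x}\ast v_x)\cdot(y_{v_y}\ast v_y)$; this product is a nonzero element of $RH$ supported at $v$, hence its coefficient there is nonzero. This bypasses any discussion of the twisting data. Also, in step~(ii) you can work directly from the equivalence relation defining $\mathrm{Ore}(RH)$ (which already hands you $t,t'$ with $xt=x't'$ and $yt=y't'$) rather than re-deriving such elements from the Ore condition; your route is valid but slightly longer.
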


\begin{remark}The homomorphism in \cref{lemma_polytope_hom} descends to a homomorphism of the quotient of $\mathrm{Ore}(RH)^{\times}_{\mathrm{ab}}$ by the subgroup $\{\pm h \mid h\in H\}$, with image in the translation-invariant polytope group, 
\[\mathcal{P}_T: \mathrm{Ore}(RH)^{\times}_{\mathrm{ab}} / \{\pm h \mid h\in H\} \to \mathfrak{P}_T(H).\] 
\end{remark}We will use the term \emph{polytope homomorphism} to describe both maps.

\subsection{Universal $L^2$-torsion}

Let $G$ be a group and let $V$ and $W$ be finite Hilbert $\mathcal{N}(G)$-modules. We say that a homomorphism $\rho \colon V \to W$ is a \emph{weak isomorphism} if it is injective and has dense image.

For any homomorphism $f \colon \Z G^k \to \Z G^l$, we write $\Lambda(f)$ to denote the homomorphism 
\[\Lambda(f) := f \otimes_{\Z G} \mathrm{id}_{\mathcal{N}(G)}  \colon \Z G^k \otimes_{\Z G} \mathcal{N}(G) \to \Z G^l \otimes_{\Z G} \mathcal{N}(G).\]

\begin{definition}[The weak $K_1$-group]
    The \emph{weak $K_1$-group} $K_1^w(\Z G)$ is the abelian group generated by endomorphisms $f \colon \Z G^k \to \Z G^k$ for all integers $k \geq 0$, such that $\Lambda(f)$ is a weak isomorphism of finite Hilbert $\mathcal{N}(G)$-modules, subject to the usual $K^1$-group relations. We write $[f]$ to denote the element of $K_1^w(\Z G)$ represented by the endomorphism $f$.

    We define the group $\widetilde{K}_1^w(\Z G)$ to be the quotient of $K_1^w(\Z G)$ by the subgroup $\{ \pm [\mathrm{id}] \}$, where $\mathrm{id}: \Z G \to \Z G$ is the identity map. The \emph{weak Whitehead group} $\mathrm{Wh}^w(G)$ of $G$ is the quotient of $K_1^w(\Z G)$ by the subgroup $\{ \pm [g] : g\in G \}$, where we write $g$ to denote the isomorphism $\Z G \to \Z G$ determined by right multiplication by the group element $g$.
\end{definition}

Let $(C_{\bullet}, \partial_{\bullet})$ be a chain complex of free finitely generated $\Z G$-modules with a fixed choice of unordered basis. From now on, we will call such a chain complex \emph{finite based free}. We write \[C_{\mathrm{odd}} := \oplus_{n\in \Z} C_{2n+1} \text{ and }C_{\mathrm{even}} := \oplus_{n \in \Z} C_{2n},\]
where $C_{\mathrm{even}}$ and $C_{\mathrm{odd}}$ admit the natural bases induced by the bases of the modules in the chain complex $C_{\bullet}$. 

Recall that a chain contraction $\gamma_{\bullet}$ of $(C_{\bullet}, \partial_{\bullet})$ is a sequence of maps $\gamma_n \colon C_n \to C_{n+1}$ for all $n \in \Z$, such that 
\[ \partial_{n+1}\circ\gamma_n + \gamma_{n-1} \circ \partial_n = \mathrm{id}_{C_n}.\] 

\begin{definition} Suppose that $(C_{\bullet}, \partial_{\bullet})$ is a chain complex of $\Z G$-modules. A \emph{weak chain contraction} of $(C_{\bullet}, \partial_{\bullet})$ is a pair of maps $(\gamma_{\bullet}, u_{\bullet})$, such that the following hold.
\begin{enumerate}
    \item The map $u_{\bullet} \colon C_{\bullet} \to C_{\bullet}$ is a chain map where each induced morphism of finite Hilbert modules \[\Lambda(u_n) \colon C_n \otimes_{\Z G} \mathcal{N}(G) \to C_n \otimes_{\Z G} \mathcal{N}(G)\] is a weak isomorphism.
    \item The map $\gamma_{\bullet} \colon C_{\bullet} \to C_{\bullet +1}$ is a null homotopy of $u_{\bullet}$.
    \item We have that $\gamma_n u_n = u_{n+1} \gamma_n$ for all $n \geq 0$.
\end{enumerate}
\end{definition}

By \cite[Lemma~2.5]{FriedlLueck2019}, any finite based free $L^2$-acyclic $\Z G$-chain complex admits a weak contraction.

We write $u_{\mathrm{odd}}$ to denote the map $C_{\mathrm{odd}} \to C_{\mathrm{odd}}$ defined by applying $u$ at each coordinate. We define the map 
\[(u\circ \partial + \gamma)_{\mathrm{odd}} \colon C_{\mathrm{odd}} \to C_{\mathrm{even}}\]
similarly.

\begin{definition}[Universal $L^2$-torsion]
    Let $(C_{\bullet}, \partial_{\bullet})$ be a finite based free $L^2$-acyclic $\Z G$-chain complex and let $(\gamma_{\bullet}, u_{\bullet})$ denote a weak chain contraction of $(C_{\bullet}, \partial_{\bullet})$. The \emph{universal $L^2$-torsion} of $(C_{\bullet}, \partial_{\bullet})$ is an element of $\widetilde{K}_1^w(\Z G)$ given by 
    \[ \rho^{(2)}_u(C_{\bullet}) = [(u\circ \partial + \gamma)_{\mathrm{odd}}] - [u_{\mathrm{odd}}].\]
    If $X$ is a finite connected $G$-CW-complex, let $C_{\bullet}(X)$ be the finite free chain complex associated to the $G$-CW structure on $X$. A choice of an element in each orbit of $n$-cells gives rise to an unordered basis of $C_{n}(X)$ for each $n \geq 0$. We define the universal $L^2$-torsion $\rho^{(2)}_u(X)$ to be the image of $\rho^{(2)}(C_{\bullet}(X))$ under the quotient
    \[ \widetilde{K}_1^w(\Z G) \twoheadrightarrow \widetilde{K}_1^w(\Z G) / \{ \pm [g] : g\in G \} = \mathrm{Wh}^w(G).\]
     
\end{definition}

\begin{remark} Recall that the Linell ring $\mathcal{D}_G$ is the division closure $\QQ G$ in the Ore localisation of the von Neumann algebra $\mathcal{N}(G)$,  with respect to its set of non-zero-divisors. 
If $G$ is a torsion free-group which satisfies the Atiyah conjecture, then $\mathcal{D}_G$ is a division ring by \cref{def:Atiyah}. In that case, the elements of the weak $K^1$-group $K_1^w(\Z G)$ are represented by endomorphisms of the form $f \colon \Z G^k \to \Z G^k$ where $f$ becomes an isomorphism upon tensoring by the identity $\mathrm{id}_{\mathcal{D}_G} \colon \mathcal{D}_G \to \mathcal{D}_G$, 
\[f \otimes_{\Z G} \mathrm{id}_{\mathcal{D}_G} \colon \Z G^k \otimes_{\Z G} \mathcal{D}_G \to \Z G^k \otimes_{\Z G} \mathcal{D}_G. \]
\end{remark}

\subsection{The $L^2$-polytope}

Let $G$ be a finitely generated group. Let $G_{\mathrm{fab}}$ denote the free abelianisation of $G$ and let $K$ be the kernel of the natural quotient map $\pi \colon G \twoheadrightarrow G_{\mathrm{fab}}$. Fix a set-theoretic section $s \colon G_{\mathrm{fab}} \to G$ of $\pi$.  By \cref{lemma:twisted_group_ring}, there exists an isomorphism between the group ring $\Z G$ and the twisted group ring $\Z K \ast_s G_{\mathrm{fab}}$.

Suppose moreover that $G$ is torsion free and satisfies the Atiyah conjecture. Then,  every automorphism of $K \trianglelefteq G$ extends to an automorphism of $\mathcal{D}_K$, and the embedding $( \mathbb{Z}K \hookrightarrow \mathcal{D}_K) \otimes_{K} \mathbb{Z}G$ gives rise to the twisted group ring $\mathcal{D}_K \ast_s G_{\mathrm{fab}}.$ 

Since $G_{\mathrm{fab}}$ is free abelian of finite rank and $K$ is torsion free and satisfies the Atiyah conjecture, by \cite[Theorem~10.38, Lemma~10.69]{Lueck2002} the twisted group ring $\mathcal{D}_K \ast_s G_{\mathrm{fab}}$ is an Ore domain, and the inclusion $\mathcal{D}_K \ast_s G_{\mathrm{fab}} \hookrightarrow \mathcal{D}_G$ induces an isomorphism
\[ F \colon \mathrm{Ore}(\mathcal{D}_K \ast_s G_{\mathrm{fab}}) \to \mathcal{D}_G.\]

\medskip

\begin{definition}[The $L^2$-polytope] \label{definition:l2_polytope}
    Let $G$ be a group of type $\F$ which satisfies the Atiyah conjecture. Let $Y$ be a finite $L^2$-acyclic $K(G,1)$-complex with universal cover $X$. Let $A$ be a matrix representative of $-\rho^{u}(X)$ where $\rho^{u}(X) \in \mathrm{Wh}^{w}(\Z G)$ is the universal $L^2$-torsion of $X$. The \emph{$L^2$-polytope} of $X$ is defined to be
    \[\mathcal{P}(X; G) :=  \mathcal{P}_T(F^{-1}(\mathrm{det}_{\mathcal{D}_G}(A \otimes \mathcal{D}_G))) \in \mathfrak{P}_T(G_{\mathrm{fab}}),\]
    where \[\mathrm{det}_{\mathcal{D}_G}(A \otimes \mathcal{D}_G) \in \mathcal{D}_G^{\times} /\, [\mathcal{D}_G^{\times}, \mathcal{D}_G^{\times}]\] denotes the Dieudonn\'{e} determinant of the matrix $A \otimes {\mathcal{D}_G}$ over $\mathcal{D}_G$.
\end{definition}

\medskip

\begin{theorem}[{Kielak \cite[Theorem~5.17]{Kielak2020}}]
    Let $G$ be an $L^2$-acyclic group of type $\F$ which satisfies the Atiyah conjecture. Suppose that $Y_1$ and $Y_2$ are finite $K(G,1)$-complexes. Then, 
\[\mathcal{P}(\widetilde{Y}_1; G) = \mathcal{P}(\widetilde{Y}_2; G).\]    
\end{theorem}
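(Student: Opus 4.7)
The plan is to show that $\mathcal{P}(\widetilde Y_1; G) = \mathcal{P}(\widetilde Y_2; G)$ in $\mathfrak{P}_T(G_{\mathrm{fab}})$ by first proving that universal $L^2$-torsion is a simple homotopy invariant, then comparing the two chain complexes via a chain homotopy equivalence and its mapping cone, and finally showing that the residual obstruction class is killed by the composition into the polytope group. To begin, I would exploit additivity of $\rho^{(2)}_u$ under direct sums of based chain complexes, together with the fact that the elementary acyclic complex $\Z G \xrightarrow{\mathrm{id}} \Z G$ has universal $L^2$-torsion $[\mathrm{id}]$, which vanishes in $\widetilde K_1^w(\Z G)$. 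This shows $\rho^{(2)}_u$ is invariant under $G$-equivariant elementary expansions and collapses, and by naturality of $\mathrm{det}_{\mathcal{D}_G}$, $F^{-1}$ and $\mathcal{P}_T$, this already gives $\mathcal{P}(\widetilde Y_1; G) = \mathcal{P}(\widetilde Y_2; G)$ whenever $\widetilde Y_1$ and $\widetilde Y_2$ are $G$-simple homotopy equivalent.

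For arbitrary finite $K(G,1)$-complexes, the universal covers $\widetilde Y_1$ and $\widetilde Y_2$ are $G$-homotopy equivalent, so their cellular chain complexes admit a $\Z G$-chain homotopy equivalence $h_\bullet \colon C_\bullet(\widetilde Y_1) \to C_\bullet(\widetilde Y_2)$. The algebraic mapping cone $\mathrm{Cone}(h_\bullet)$ is then a finite based free chain complex acyclic already over $\Z G$, and hence \emph{a fortiori} $L^2$-acyclic. Standard additivity of $\rho^{(2)}_u$ applied to the short exact sequence $0 \to C_\bullet(\widetilde Y_2) \to \mathrm{Cone}(h_\bullet) \to \Sigma C_\bullet(\widetilde Y_1) \to 0$ yields
\[\rho^{(2)}_u(C_\bullet(\widetilde Y_1)) - \rho^{(2)}_u(C_\bullet(\widetilde Y_2)) = \pm\,\rho^{(2)}_u(\mathrm{Cone}(h_\bullet))\]
in $\mathrm{Wh}^w(G)$, so the theorem reduces to showing that any class represented by a $\Z G$-acyclic finite based free chain complex maps to zero in $\mathfrak{P}_T(G_{\mathrm{fab}})$ under the polytope map.

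Such a class is represented by a matrix $A \in \mathrm{GL}_n(\Z G)$; Whitehead's lemma reduces $A$ to a $1\times 1$ matrix $(u)$ where $u$ is a unit in $\Z G \cong \Z K \ast_s G_{\mathrm{fab}}$. Its image under $\mathrm{det}_{\mathcal{D}_G}$ followed by $F^{-1}$ is the class of $u$ in $\mathrm{Ore}(\mathcal{D}_K \ast_s G_{\mathrm{fab}})^{\times}_{\mathrm{ab}}/\{\pm h : h \in G_{\mathrm{fab}}\}$. The main obstacle, and the technical heart of the argument, will be to show that the Newton polytope $\mathcal{P}(u) \in \mathfrak{P}_T(G_{\mathrm{fab}})$ is then a single translate. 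The underlying principle is that invertibility of $u$ over the un-localised $G_{\mathrm{fab}}$-graded ring $\Z K \ast_s G_{\mathrm{fab}}$ rigidifies its support in $G_{\mathrm{fab}}$: writing $u = xy^{-1}$ as an Ore fraction in $\mathrm{Ore}(\mathcal{D}_K \ast_s G_{\mathrm{fab}})$, the Minkowski difference $\mathcal{P}(x) - \mathcal{P}(y)$ becomes trivial modulo the subgroup $\mathfrak{C}(G_{\mathrm{fab}})$ of singleton pairs, which is precisely what is killed in passing from $\mathfrak{P}(G_{\mathrm{fab}})$ to $\mathfrak{P}_T(G_{\mathrm{fab}})$.
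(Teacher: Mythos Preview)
The paper does not prove this statement; it is quoted from \cite[Theorem~5.17]{Kielak2020} and used as a black box, so there is no in-paper proof to compare against. That said, your overall framework is the right one and matches how Kielak organises his argument: simple-homotopy invariance of $\rho^{(2)}_u$ is immediate, and for general finite models the mapping-cone argument shows that the discrepancy $\rho^{(2)}_u(C_\bullet(\widetilde Y_1)) - \rho^{(2)}_u(C_\bullet(\widetilde Y_2))$ lies in the image of the ordinary $K_1(\Z G)$ inside $\mathrm{Wh}^w(G)$, so everything reduces to showing that this image dies under the polytope map.

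The genuine gap is in your final step. You assert that ``Whitehead's lemma reduces $A$ to a $1\times 1$ matrix $(u)$ where $u$ is a unit in $\Z G$.'' This is not what Whitehead's lemma says: it identifies $E(R)$ with $[\mathrm{GL}(R),\mathrm{GL}(R)]$, but it does \emph{not} guarantee that every class in $K_1(R)$ is represented by a $1\times 1$ unit. That conclusion is available over division rings (via Dieudonn\'e) or over Euclidean and local rings, but $\Z G$ is none of these. Your subsequent observation is perfectly correct on its own terms: since $\Z K$ is a domain and $G_{\mathrm{fab}}$ is orderable, a leading-term argument shows any unit of the graded ring $\Z K \ast_s G_{\mathrm{fab}}$ is supported on a single graded piece, hence has trivial polytope in $\mathfrak{P}_T$. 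But you cannot feed this observation the input it needs, because you have not produced such a unit. What you actually obtain after taking $\det_{\mathcal{D}_G}$ and applying $F^{-1}$ is a genuine fraction $pq^{-1}$ in $\mathrm{Ore}(\mathcal{D}_K \ast_s G_{\mathrm{fab}})$ with $p,q \in \mathcal{D}_K \ast_s G_{\mathrm{fab}}$, and there is no a~priori reason for $\mathcal{P}(p) - \mathcal{P}(q)$ to vanish in $\mathfrak{P}_T$. Showing that it does is precisely the technical core of Kielak's proof and requires substantially more than what you have sketched. (One shortcut would be to assume $\mathrm{Wh}(G)=0$, which makes the obstruction vanish outright, but that is an extra hypothesis not present in the statement.)
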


Hence, we may write $\mathcal{P}(G)$ to denote the $L^2$-polytope $\mathcal{P}(\tilde{Y}; G)$ for any $K(G,1)$-complex $Y$.

\medskip

We identify the elements of the infinite cyclic group $\Z$ with the integer lattice in $\RR$.

\begin{lemma}\label{lemma_polytope_abelian}
  The $L^2$-polytope $\mathcal{P}(\Z)$ of the infinite cyclic group $\Z$ is represented by $ - [0,1]$, where $[0,1] \subseteq \RR$ denotes the unit interval between 0 and 1 in $\RR$. The $L^2$-polytope of the free abelian group of rank $k$ is trivial for all $k \geq 2$.
\end{lemma}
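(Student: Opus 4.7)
The plan is to compute both $L^2$-polytopes directly from \cref{definition:l2_polytope} using the natural $K(G,1)$-complexes. Since free abelian groups are commutative, torsion-free, of type $\F$, and satisfy the Atiyah conjecture, the Linnell ring $\mathcal{D}_{\Z^k}$ is an honest field---namely $\mathbb{Q}(t_1,\dots,t_k)$---so Dieudonn\'{e} determinants reduce to ordinary determinants, and the polytope homomorphism is simply $\mathcal{P}_T$ applied to the determinant of a matrix representative of $-\rho^{(2)}_u$.

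For $G = \Z = \langle t\rangle$, I would take $Y = S^1$ so that $\tilde{Y} = \RR$ has the cellular $\Z G$-chain complex
\[ 0 \to \Z G \xrightarrow{t-1} \Z G \to 0. \]
A weak chain contraction $(\gamma_\bullet, u_\bullet)$ is given by $u_0 = u_1 = (t-1)$ (a weak isomorphism upon applying $\Lambda$, since $t-1$ is a non-zero-divisor in $\mathcal{N}(\Z)$) and $\gamma_0 = \mathrm{id}$; all three axioms of a weak chain contraction are immediate. Using the formula from the definition of universal $L^2$-torsion,
\[ \rho^{(2)}_u(\tilde Y) \;=\; [(u\circ\partial + \gamma)_{\mathrm{odd}}] - [u_{\mathrm{odd}}] \;=\; [(t-1)^2] - [(t-1)] \;=\; [(t-1)] \in \mathrm{Wh}^w(\Z). \]
Hence $-\rho^{(2)}_u(\tilde Y)$ is represented by a matrix whose Dieudonn\'{e} determinant over $\mathcal{D}_{\Z} = \mathbb{Q}(t)$ is $(t-1)^{-1}$, and the polytope homomorphism sends this to $-\mathcal{P}(t-1) = -\mathrm{hull}\{0,1\} = -[0,1]$, as claimed.

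For $G = \Z^k$ with $k \geq 2$, I would take $Y = T^k = (S^1)^k$ and apply the product formula for universal $L^2$-torsion (proved in the Friedl--L\"uck framework): for torsion-free groups $G_1, G_2$ of type $\F$ that are $L^2$-acyclic and satisfy the Atiyah conjecture, with finite $K(G_i,1)$-complexes $Y_i$,
\[ \rho^{(2)}_u(\tilde Y_1 \times \tilde Y_2;\, G_1 \times G_2) \;=\; \chi(Y_1)\cdot(j_2)_*\rho^{(2)}_u(\tilde Y_2; G_2) \;+\; \chi(Y_2)\cdot(j_1)_*\rho^{(2)}_u(\tilde Y_1; G_1), \]
where $j_i\colon G_i \hookrightarrow G_1 \times G_2$ is the canonical inclusion. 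Writing $\Z^k = \Z \times \Z^{k-1}$, both $\chi(S^1) = 0$ and $\chi(T^{k-1}) = 0$ (since $k-1 \geq 1$), so $\rho^{(2)}_u(\tilde T^k) = 0$. Applying $\mathcal{P}_T$ yields $\mathcal{P}(\Z^k) = 0 \in \mathfrak{P}_T(\Z^k)$.

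The main obstacle is the product formula in the second case, which is not recalled in the excerpt. If one wishes to avoid it, an alternative is a direct computation with the Koszul-type cellular chain complex of $\tilde T^k$: taking $u_p = (t_1-1)\cdot \mathrm{id}$ at each degree $p$ and building $\gamma_\bullet$ from the standard Koszul contraction, one checks that the $\mathcal{D}_G$-determinants of $(u\circ\partial + \gamma)_{\mathrm{odd}}$ and $u_{\mathrm{odd}}$ agree as soon as $k \geq 2$, so that $-\rho^{(2)}_u$ is sent to $0$ by the polytope homomorphism. This is routine but bookkeeping-heavy, which is why the product formula is preferable.
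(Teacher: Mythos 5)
Your proof is correct and takes essentially the same route as the paper: for $\Z^k$ with $k \geq 2$ both arguments invoke the same product formula for universal $L^2$-torsion (which the paper does cite, as \cite[Theorem~3.5(5)]{FriedlLueck2017}), and for $\Z$ the paper simply cites \cite[Example~3.3]{FriedlLueck2017} for $\rho^{(2)}_u(\RR;\Z) = (z-1)$ where you instead verify it directly with an explicit weak chain contraction --- your computation $[(t-1)^2]-[(t-1)]=[(t-1)]$ checks out and the contraction axioms hold as you claim.
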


\begin{proof}
    Let $G = \langle g \rangle$ be infinite cyclic. We identify the elements of $G$ with the integer lattice points in $\RR$ via the map $g^n \mapsto n$. The weak Whitehead group of $G$ is given by 
    \[\begin{split} \mathrm{Wh}^{w}(G) &\cong \mathbb{Q}(z, z^{-1})^{\times} / \{ \pm z^n \mid n \in \Z\} \\
    g &\mapsto z.\end{split}\]
    Then, the universal $L^2$-torsion $\rho^{(2)}_u(\RR; G) \in \mathrm{Wh}^w(G)$ of the finite free $G$-CW complex $\RR$ is identified with the element $(z-1)$ (see \cite[Example~3.3]{FriedlLueck2017}). Since $G$ is abelian, $G_{\mathrm{fab}} \cong G$ and $H_1(G; \Z) \otimes_{\Z} \RR \cong G \otimes_{\Z}\RR$. Thus, \[ \mathcal{P}(G) \simeq - \mathrm{hull}_{G \otimes_{\Z} \RR}\{0, 1\} = - [0,1].\]

    By the product formula \cite[Theorem~3.5(5)]{FriedlLueck2017}, 
    \[\rho^{(2)}_u(\RR^k; \Z^k) = \chi(\mathbb{R}^{k-1} /\, \Z^{k-1}) \cdot \rho^{(2)}_u(\mathbb{R}; 
    \Z) = 0.\]
    Hence, if $G \cong \Z^k$ for $k > 1$ then $\mathcal{P}(G)$ is trivial.
\end{proof}
\medskip

The following lemma will be our main tool for constructing polytopes.

\begin{lemma}[Friedl--L\"{u}ck {\cite[Theorem~2.5(2)]{FriedlLueck2017}}]\label{lemma_sum_polytopes}
  Let $G_1, G_2$ and $H$ be $L^2$-acyclic groups of type $\F$. Suppose that $G$ is a group which satisfies the Atiyah conjecture and which splits as the amalgamated free product \[G \cong G_1 \ast_H G_2.\] Let $i_1 \colon G_1 \to G$, $i_2 \colon G_2 \to G$ and $i_3 \colon H \to G$ be the associated embedding maps. 
Then the $L^2$-polytope of $G$ is represented by \[ {i_1}_{*}\mathcal{P}(G_1) + {i_2}_{*}\mathcal{P}(G_2) - {i_3}_{*}\mathcal{P}(H).\] \end{lemma}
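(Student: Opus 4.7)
My plan is to deduce the polytope statement from the corresponding statement at the level of universal $L^2$-torsion, via the polytope homomorphism, after constructing an appropriate classifying space for $G$ out of those for $G_1$, $G_2$ and $H$.

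First I would build a finite classifying space for $G$ adapted to the splitting. Since $G_1$, $G_2$ and $H$ are all of type $\F$, pick finite $K(G_i,1)$-complexes $Y_1, Y_2$ and a finite $K(H,1)$-complex $Z$, and realise the subgroup embeddings $H \hookrightarrow G_1$, $H \hookrightarrow G_2$ by cellular maps $f_1 \colon Z \to Y_1$ and $f_2 \colon Z \to Y_2$. Form the double mapping cylinder
\[Y := Y_1 \cup_{f_1} (Z \times [0,1]) \cup_{f_2} Y_2,\]
which is a finite CW complex. A van Kampen argument together with a Mayer--Vietoris comparison in higher homotopy shows that $Y$ is a $K(G,1)$. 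Passing to the universal cover, the chain complex $C_\bullet(\widetilde{Y})$ sits in a short exact sequence of finite based free $\Z G$-chain complexes
\[0 \to \Z G \otimes_{\Z H} C_\bullet(\widetilde{Z}) \to \bigl(\Z G \otimes_{\Z G_1} C_\bullet(\widetilde{Y}_1)\bigr)\oplus \bigl(\Z G \otimes_{\Z G_2} C_\bullet(\widetilde{Y}_2)\bigr) \to C_\bullet(\widetilde{Y}) \to 0,\]
the standard Mayer--Vietoris sequence for the amalgam. Since all three pieces are $L^2$-acyclic, so is $G$ (by the long exact sequence in $L^2$-homology), and $\mathcal{P}(G)$ is defined.

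Next I would invoke the additivity of universal $L^2$-torsion in weak-exact short exact sequences of $L^2$-acyclic finite based free chain complexes (this is the content of Friedl--L\"uck's sum formula). Combined with the base-change identity $\rho^{(2)}_u(\Z G \otimes_{\Z G_i} C_\bullet(\widetilde{Y}_i)) = (i_i)_*\, \rho^{(2)}_u(\widetilde{Y}_i)$ in $\mathrm{Wh}^w(G)$ (and analogously for $H$), this yields
\[\rho^{(2)}_u(\widetilde{Y};G) = (i_1)_*\rho^{(2)}_u(\widetilde{Y}_1;G_1) + (i_2)_*\rho^{(2)}_u(\widetilde{Y}_2;G_2) - (i_3)_*\rho^{(2)}_u(\widetilde{Z};H).\]
Then I would apply the polytope homomorphism $\mathcal{P}_T$ of \cref{lemma_polytope_hom} (composed with the sign inherent to the definition of the $L^2$-polytope in \cref{definition:l2_polytope}). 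Naturality of the polytope homomorphism under group homomorphisms ensures that $\mathcal{P}_T$ intertwines the induced maps $(i_j)_*$ on weak Whitehead groups with the induced maps on translation-invariant polytope groups, giving the claimed formula.

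The main obstacle is less the manipulation of the formula and more the bookkeeping: one must ensure the weak chain contractions on the three pieces assemble into a weak chain contraction for $C_\bullet(\widetilde{Y})$ that respects the short exact sequence, so that the sum formula for universal $L^2$-torsion applies on the nose in $\widetilde{K}_1^w(\Z G)$; one must also track the signs and translation ambiguities so that the identity descends cleanly to $\mathfrak{P}_T(G_{\fab})$. Since all of this is packaged in \cite[Theorem~2.5(2)]{FriedlLueck2017}, the only remaining work is to verify that our hypotheses ($G_i, H$ of type $\F$ and $L^2$-acyclic, $G$ satisfying the Atiyah conjecture) put us in the scope of that theorem and that the polytope homomorphism is natural with respect to the inclusions $i_1, i_2, i_3$.
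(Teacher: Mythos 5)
The paper does not prove this lemma itself --- it quotes it directly from Friedl--L\"uck \cite[Theorem~2.5(2)]{FriedlLueck2017} --- and your outline (double mapping cylinder of classifying spaces, the Mayer--Vietoris short exact sequence of induced based free $\Z G$-chain complexes, additivity of universal $L^2$-torsion, then naturality of the polytope homomorphism) is precisely the standard argument underlying that citation. Your proposal is correct and takes essentially the same route as the cited source.
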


\medskip

Finally, we will need the following result relating the thickness of the polytope along a character $\phi$ with the $L^2$-Euler characteristic of the kernel. Recall that we say a group $H$ is \emph{$L^2$-finite} if 
\[ \sum_{p \geq 0 }b_p^{(2)}(H) < \infty.\]


\begin{proposition}[Friedl--L\"{u}ck {\cite[Theorem~3.6(4)]{FriedlLueck2017}}, Funke {\cite[Theorem~3.52]{Funke2018}}]\label{prop:L2Euler}
    Let $G$ be an $L^2$-acyclic group of type $\F$ which satisfies the Atiyah conjecture. Then, for every epimorphism $\phi \colon G \to \Z$, the kernel $\ker \phi$ is $L^2$-finite and
    \[- \chi^{(2)}(\ker \phi) = \thi_{\phi}(\mathcal{P}(G)),\]
    where $\chi^{(2)}(\ker \phi)$ denotes the $L^2$-Euler characteristic of $\ker \phi$.
\end{proposition}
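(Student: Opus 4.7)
My plan is to reduce the equality to a computation with the twisted group ring $\mathcal{D}_K \ast_s \Z$ and then identify an Euler characteristic of a bounded torsion complex with the thickness of the relevant Dieudonn\'e determinant.

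\textbf{Setup.} Let $Y$ be a finite $K(G,1)$ with universal cover $X$, and write $K = \ker\phi$. Pick a section $s\colon \Z \to G$ of $\phi$ so that $\Z G \cong \Z K \ast_s \Z$, and set $T := \mathcal{D}_K \ast_s \Z$. Since $G$ satisfies the Atiyah conjecture, so does $K$, hence $\mathcal{D}_K$ is a skew field and $T$ is a twisted Laurent polynomial ring over a skew field. In particular $T$ is a (right and left) Noetherian Ore domain whose Ore field of fractions is $\mathcal{D}_G$, via the isomorphism $F$ appearing in \cref{definition:l2_polytope}. Let $C_\bullet = C_\bullet(X)$ be the cellular $\Z G$-chain complex (finite based free) and form $D_\bullet := T \otimes_{\Z G} C_\bullet$, a bounded complex of finite free $T$-modules.

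\textbf{Homological side.} Since $G$ is $L^2$-acyclic and satisfies the Atiyah conjecture, $\mathcal{D}_G \otimes_{\Z G} C_\bullet$ is acyclic. Because $\mathcal{D}_G = \mathrm{Ore}(T)$ and localisation is exact, this forces every $H_p(D_\bullet)$ to be $T$-torsion, hence finitely generated and torsion over the skew PID $T$, and so finite-dimensional over $\mathcal{D}_K$. By Shapiro's lemma for the induced module $T \cong \mathcal{D}_K \otimes_{\Z K} \Z G$, I get $H_p(D_\bullet) \cong H_p(K; \mathcal{D}_K)$. Applying \cref{lueck} to the group $K$ identifies these dimensions with the $L^2$-Betti numbers of $K$, so that
\[
\sum_p (-1)^p \dim_{\mathcal{D}_K} H_p(D_\bullet) \;=\; \sum_p (-1)^p b_p^{(2)}(K) \;=\; \chi^{(2)}(\ker\phi).
\]

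\textbf{Polytope side.} For the right-hand side I want to show $\thi_\phi(\mathcal{P}(G))$ equals the negative of the alternating $\mathcal{D}_K$-dimension above. Picking a weak chain contraction $(u_\bullet,\gamma_\bullet)$ of $C_\bullet$ and considering it over $T$, the element $-\rho^{(2)}_u(X)$ is represented by a square matrix $A$ whose Dieudonn\'e determinant $\det_{\mathcal{D}_G}(A)$ defines $\mathcal{P}(G)$. The polytope homomorphism $\mathcal{P}_T$ intertwines the pushforward $\phi_\ast$ with "taking Newton thickness in the $\Z$-direction"; concretely, for $z = xy^{-1}$ with $x,y \in T$, the image $\phi_\ast \mathcal{P}(z) \in \mathfrak{P}_T(\Z)$ is an interval whose length is $\deg_t(x) - \deg_t(y)$, where $\deg_t$ is the difference of top and bottom $\Z$-exponents in $T$. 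Using multiplicativity of Dieudonn\'e determinants and decomposing the odd--even maps $(u\circ\partial+\gamma)_\mathrm{odd}$ and $u_\mathrm{odd}$ in blocks, the $u$-contributions cancel after taking the alternating sum, and I am left with
\[
\thi_\phi(\mathcal{P}(G)) \;=\; -\sum_p (-1)^p \deg_t\bigl(\det_T \partial_p\bigr),
\]
where the degree is interpreted via elementary divisors in the skew PID $T$. A standard computation in a skew PID gives $\deg_t(\det_T \partial_p) = \dim_{\mathcal{D}_K}(\mathrm{tors}\,\mathrm{coker}\,\partial_p)$, and the usual Euler characteristic bookkeeping on the bounded complex $D_\bullet$ (with torsion homology) converts the alternating sum of cokernel-torsion dimensions into $\sum_p (-1)^p \dim_{\mathcal{D}_K} H_p(D_\bullet)$. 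Combined with Step~2, this yields $\thi_\phi(\mathcal{P}(G)) = -\chi^{(2)}(\ker\phi)$.

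\textbf{Main obstacle.} The serious step is the last paragraph: namely, verifying that $\phi_\ast$ of the $L^2$-polytope is genuinely computed by the $t$-degree of the Dieudonn\'e determinant over the twisted skew PID $T$, and that the contributions of the auxiliary weak-isomorphism $u_\bullet$ cancel in the alternating sum. This requires careful diagonalisation over the twisted Laurent ring and a correct interpretation of $\deg_t$ in the non-commutative setting (so that $\deg_t$ factors through the abelianisation of $\mathcal{D}_G^\times$ and is additive on products). Once this is pinned down, relating the alternating sum of torsion lengths to the alternating sum of homology dimensions is routine via the long exact sequences cutting $D_\bullet$ into its boundary pieces.
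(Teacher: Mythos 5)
First, note that the paper does not actually prove this proposition: it is imported wholesale from Funke's thesis (Theorem~3.52), so there is no in-paper argument to compare against. Your outline is, in essence, the strategy of Funke's proof and of Friedl--L\"uck's 3-manifold analogue: pass to the twisted Laurent ring $T=\mathcal{D}_K\ast_s\Z$ over the skew field $\mathcal{D}_K$, identify $H_p(T\otimes_{\Z G}C_\bullet)\cong H_p(K;\mathcal{D}_K)$ and hence, via \cref{lueck}, with $b_p^{(2)}(K)$, and compute $\thi_\phi$ of the $L^2$-polytope as a $t$-degree of a Dieudonn\'e determinant over $T$. The homological half of your argument is complete and correct (including the finiteness of the $b_p^{(2)}(\ker\phi)$, which you get from Noetherianity of $T$ plus torsionness of the homology).

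The gap is exactly where you locate it, and it is not ``routine''. Three things are asserted rather than proved. (i) The compatibility $\thi_\phi\bigl(\mathcal{P}_T(F^{-1}(\det_{\mathcal{D}_G}B))\bigr)=\deg_t\bigl(\det_{\mathrm{Ore}(T)}B\bigr)$: the polytope is defined via the crossed product $\mathcal{D}_{K'}\ast G_{\mathrm{fab}}$ with $K'=\ker(G\to G_{\mathrm{fab}})$, whereas your $T$ uses $K=\ker\phi\supseteq K'$, so one must check that regrouping the $G_{\mathrm{fab}}$-support of an element by its $\phi$-value causes no cancellation in the $t^i$-coefficients (true, because distinct $K'$-cosets in $K$ are $\mathcal{D}_{K'}$-independent inside $\mathcal{D}_K$, but this needs an argument). (ii) The identity $\deg_t(\det_{\mathrm{Ore}(T)}A)=\dim_{\mathcal{D}_K}\mathrm{coker}(A)$ for a square matrix over $T$ invertible over $\mathrm{Ore}(T)$ requires a genuine skew-PID diagonalisation. (iii) The passage from $\rho^{(2)}_u=[(u\circ\partial+\gamma)_{\mathrm{odd}}]-[u_{\mathrm{odd}}]$ to the alternating sum of homology dimensions is the actual content of Funke's theorem: one must show that $\dim_{\mathcal{D}_K}\mathrm{coker}\bigl((u\circ\partial+\gamma)_{\mathrm{odd}}\bigr)-\dim_{\mathcal{D}_K}\mathrm{coker}(u_{\mathrm{odd}})$ is independent of the weak chain contraction and additive in short exact sequences of complexes; this is an induction, not a formal consequence of multiplicativity of $\det$. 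Finally, your intermediate formula carries a sign error coming from a degree shift: for a bounded complex over a skew PID with torsion homology, the saturation of $\mathrm{im}\,\partial_p$ in $D_{p-1}$ equals $\ker\partial_{p-1}$, so $\mathrm{tors}\,\mathrm{coker}\,\partial_p\cong H_{p-1}(D_\bullet)$ and $\sum_p(-1)^p\dim\mathrm{tors}\,\mathrm{coker}\,\partial_p=-\sum_p(-1)^p\dim H_p(D_\bullet)$; with the signs as you state them you would conclude $\thi_\phi(\mathcal{P}(G))=+\chi^{(2)}(\ker\phi)$. None of this indicates the approach fails --- it is the right approach --- but as written the proposal defers the theorem's real content rather than supplying it.
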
 

\section{Right-angled Artin groups}

Let $L$ be a finite flag complex and let $V(L)$ denote the set of vertices and $E(L)$ the set of edges of $L$. The \emph{\raag associated to $L$} is the group $A_L$ given by the presentation  \[A_L = \langle g_v, v\in V(L) \mid [g_{v_i}, g_{v_j}] = 1 \text{ whenever }(v_i, v_j) \in E(L) \rangle.\] 
We call the set $\{g_v \mid v \in V(L)\}$ the \emph{standard generators} of $A_L$. We follow the convention that $A_{\emptyset}$ is the trivial group.

\begin{remark}
    For the remainder of this article, unless otherwise stated, every flag complex will be assumed to be finite.
\end{remark}

We record the standard fact that if $L$ has connected components $L_1, \ldots, L_k$, then $A_L$ splits as a free product 
\[A_L \cong  A_{L_1} \ast \cdots \ast A_{L_k},\]
where each $A_{L_i}$ is one-ended if and only if $L_i$ is not a singleton.

 A subcomplex $K$ of a simplicial complex $L$ is said to be \emph{induced} if for any subset of vertices $S \subseteq V(K)$ which spans a $k$-simplex in $L$, the vertices in $S$ also span a $k$-simplex in $K$. If $S \subseteq V(L)$ is a set of vertices of $L$, we write $L|_S$ to denote the induced subcomplex of $L$ spanned by the vertices $S$. It is well-known that the subgroup of the \raag $A_L$ spanned by the elements corresponding to a subset of vertices $S \subseteq V(L)$ is isomorphic to the \raag $A_{L|_S}$, 
\[\langle S \rangle_{A_L} \cong A_{L|_S}.\]
Any such subgroup is called a \emph{parabolic subgroup}.

\smallskip

If $X$ is a simplicial complex, we write $\bar{b}_i(X)$ to denote the rational rank of the reduced homology of $X$. The $L^2$-Betti numbers of right-angled Artin groups have been calculated by Davis--Leary:

\begin{theorem}[Davis--Leary \cite{DavisLeary2003}]\label{thm_l2_RAAGs}
Let $A_L$ be a \raag defined by the flag complex $L$. Then for every $p \in \Z$, the $p$-th $L^2$-Betti number of $A_L$ is 
    \[b_p^{(2)}(A_L) = \bar{b}_{p-1}(L).\]
\end{theorem}

Finally, we note that it was shown by Linnell--Okun--Schick in \cite{LinnellOkunSchick2012} that right-angled Artin group satisfy the Atiyah conjecture. 

\subsection{Characters of \raags}

Let $A_L$ be a right-angled Artin group determined by the flag complex $L$ and let $\phi \colon A_L \to \RR$ be a homomorphism. The \emph{living subcomplex} of $\phi$, denoted by $\mathcal{L}_{\phi}$, is the subcomplex of $L$ induced by the set of vertices $v$ in $L$ such that $\phi(g_v) \neq 0$. We say that a subcomplex $M$ of $L$ is \emph{dominating} if every vertex of $L$ is at distance at most one from a vertex in $M$.

\begin{proposition}[Meier--VanWyk {\cite[Theorem~6.1]{MeierVanWyk1995}}]\label{lemma_fibered_characters}
Let $A_L$ be a \raag and $\phi \colon A_L \to \RR$ a character with $\mathrm{Im}(\phi) \subseteq \mathbb{Q}$. Then $\phi$ is contained in the first Sigma invariant of $A_L$ if and only if the living subcomplex $\mathcal{L}_{\phi}$ is connected and dominating. 
\end{proposition}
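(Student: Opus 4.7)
Since $\mathrm{Im}(\phi) \subseteq \QQ$ and $A_L$ is finitely generated, after rescaling I may assume $\phi \colon A_L \to \ZZ$. The plan is to invoke the Bieri--Neumann--Strebel characterization: $\ker\phi$ is finitely generated if and only if both $[\phi]$ and $[-\phi]$ lie in the BNS invariant $\Sigma^1(A_L)$. Since the living subcomplex $\mathcal{L}_\phi$ is invariant under $\phi \mapsto -\phi$, it suffices to show that $[\phi] \in \Sigma^1(A_L)$ if and only if $\mathcal{L}_\phi$ is connected and dominating.

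For the $(\Rightarrow)$ direction I argue the contrapositive via retractions to parabolic subgroups. The key observation is that the standard retraction $\pi_Q \colon A_L \twoheadrightarrow A_Q$ onto the parabolic on a vertex subset $Q \subseteq V(L)$ preserves $\phi$-values precisely when $V(L) \setminus Q \subseteq V^0 := V(L) \setminus V(\mathcal{L}_\phi)$. If $\mathcal{L}_\phi$ is disconnected as $\mathcal{L}_1 \sqcup \mathcal{L}_2$, I would take $Q = V(\mathcal{L}_\phi)$: then $A_Q = A_{\mathcal{L}_1} \ast A_{\mathcal{L}_2}$ as a free product, and a Bass--Serre analysis of the action of $\ker(\phi|_{A_Q})$ on the associated tree shows that the quotient graph of groups has finitely many vertex orbits but infinitely many edge orbits, forcing $\ker(\phi|_{A_Q})$ to be infinitely generated. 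If $\mathcal{L}_\phi$ is non-dominating, I would pick a dead vertex $w$ with $N_L(w) \subseteq V^0$ and take $Q = V(L) \setminus N_L(w)$: then $w$ is isolated in $L|_Q$ and $A_Q = A_{L|_{Q \setminus \{w\}}} \ast \langle g_w \rangle$, to which the same Bass--Serre argument applies. In both cases the surjection $\ker\phi \twoheadrightarrow \ker(\phi|_{A_Q})$ induced by $\pi_Q$ forces $\ker\phi$ itself to be infinitely generated.

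For the $(\Leftarrow)$ direction, assuming $\mathcal{L}_\phi$ is connected and dominating, I would apply Bestvina--Brady style Morse theory on the universal cover $\widetilde{S_L}$ of the Salvetti complex. Extending $\phi$ linearly gives a $\phi$-equivariant map $\tilde{f} \colon \widetilde{S_L} \to \RR$. The ascending and descending links at each vertex are flag subcomplexes naturally identified with $\mathcal{L}_\phi$, so connectedness of $\mathcal{L}_\phi$ yields connectedness of each. The dominating hypothesis ensures that every horizontal edge of $\tilde{f}$ (arising from a dead generator) is adjacent to an ascending edge via a commuting square in $L$. A standard Morse-theoretic argument then shows that $\tilde{f}^{-1}(0)$ is connected and admits a cocompact $\ker\phi$-action, from which $\ker\phi$ is finitely generated.

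The main obstacle is the $(\Leftarrow)$ direction, since classical Bestvina--Brady Morse theory requires $\phi$ to be non-vanishing on every generator. The dominating hypothesis is precisely what allows each horizontal (dead) edge to be absorbed into an adjacent commuting square with a living generator, restoring the level-set connectedness needed to conclude finite generation. An alternative is a direct construction of a finite generating set for $\ker\phi$, consisting of all dead generators $g_w$ together with elements $g_v^a g_{v'}^{-b}$ for $(v, v') \in E(\mathcal{L}_\phi)$ satisfying $a\phi(g_v) = b\phi(g_{v'})$ and conjugates of dead generators by their living neighbours; verifying that this set generates $\ker\phi$ via a normal-form argument on $A_L$ is the technical heart of that alternative route.
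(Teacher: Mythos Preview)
The paper does not give its own proof of this proposition: it is quoted as \cite[Theorem~6.1]{MeierVanWyk1995} and used as a black box throughout. There is therefore no argument in the paper to compare your plan against.

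That said, your outline is correct and is essentially how the result is established in the literature. The contrapositive for $(\Rightarrow)$ via parabolic retractions is clean: the key point, which you have, is that whenever the killed generators are all dead the retraction $\pi_Q$ intertwines $\phi$ with its restriction, so a surjection $\ker\phi \twoheadrightarrow \ker(\phi|_{A_Q})$ drops out, and the free-product targets you produce have infinitely generated $\phi$-kernels by the Bass--Serre edge-orbit count (trivial edge stabilisers, infinitely many $\ker\phi$-orbits of edges since $\phi$ has infinite image). For $(\Leftarrow)$ your Morse-theoretic sketch is the right picture; the only real work, as you correctly flag, is handling the horizontal edges, and the dominating hypothesis is precisely what lets each dead edge be homotoped across a commuting $2$-cell into an adjacent sublevel set. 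This mild adaptation of Bestvina--Brady is standard, and your alternative explicit-generating-set route is closer in spirit to the original Meier--VanWyk argument.
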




\subsection{Structure of coherent right-angled Artin groups}

A simplicial complex is said to be \emph{chordal} if it does not contain an induced cycle of length greater than three. A subcomplex $K$ of a complex $L$ is \emph{separating} if there exist proper subcomplexes $L_0, L_1 \subseteq L$ which contain $K$ as a proper subcomplex, and such that $L_0 \cap L_1 = K$ and $L = L_0 \cup L_1$. For any two subcomplexes $K_0, K_1 \subseteq L$, a subcomplex $K$ of $L$ is said to \emph{separate $K_0$ from $K_1$}, or is said to be \emph{$(K_0, K_1)$-separating}, if there exist proper subcomplexes $L_0, L_1 \subseteq L$ such that $L_0 \cap L_1 = K$, $L = L_0 \cup L_1$ and $K_0 \subseteq L_0$ and $K_1 \subseteq L_1$.

We say that two disjoint subcomplexes $K_0$ and $K_1$ of a simplicial complex $L$ are \emph{adjacent} if there exists an edge connecting a vertex of $K_0$ to a vertex of $K_1$.

\begin{lemma}\label{prop_chordal}
    Let $L$ be a connected chordal flag complex. Then for any two disjoint non-adjacent subcomplexes $K_0$ and $K_1$ of $L$, there exists an induced subcomplex $K$ of $L$ which is a simplex and which separates $K_0$ from $K_1$.
\end{lemma}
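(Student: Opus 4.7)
My plan is to pass to the 1-skeleton and exploit the clique-tree decomposition of chordal graphs. Let $G$ denote the 1-skeleton of $L$. Since $L$ is a flag complex whose 1-skeleton is chordal, the simplices of $L$ correspond precisely to the cliques of $G$; connectedness of $L$ yields connectedness of $G$. The assumption that $K_0, K_1$ are non-adjacent becomes the statement that no edge of $G$ joins $V(K_0)$ to $V(K_1)$.

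The first step is to produce the candidate separating simplex. Fix a clique tree $T$ of $G$: the bags of $T$ are the maximal cliques of $G$, and for each $v\in V(G)$ the set $T(v)\subseteq T$ of bags containing $v$ is a subtree of $T$ (the Helly / running-intersection property). Set $T(V(K_i)) := \bigcup_{v\in V(K_i)} T(v)$. Non-adjacency of $K_0, K_1$ means no bag of $T$ contains vertices from both $V(K_0)$ and $V(K_1)$, so these two subsets of $T$ are disjoint. I then pick an edge $e = (B_0, B_1)$ of $T$ whose removal separates $T(V(K_0))$ from $T(V(K_1))$, and set $S := B_0 \cap B_1$. The running-intersection property forces $S$ to be a clique in $G$, so $K := L|_S$ is an induced simplex of $L$.

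Next I construct $L_0$ and $L_1$. Let $T_i$ be the component of $T\setminus e$ containing $T(V(K_i))$, put $V_i := \bigcup_{B\in T_i} B$, and set $L_i := L|_{V_i}$. The subtree property gives $V_0 \cap V_1 = S$ (any $v$ in both has $T(v)$ crossing $e$, so $v \in B_0 \cap B_1$) and $V_0 \cup V_1 = V(G)$ since every bag lies in $T_0$ or $T_1$; the same argument forces $V(K_i) \cap S = \emptyset$, because $T(V(K_i)) \subseteq T_i$ cannot cross $e$. Since $L$ is flag, every simplex of $L$ is a clique of $G$ contained in a single bag, whence in $V_0$ or $V_1$; this gives $L_0 \cup L_1 = L$. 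The induced-subcomplex intersection $L_0 \cap L_1 = L|_{V_0\cap V_1} = L|_S$ is exactly $K$. The inclusions $K_i \subseteq L_i$ follow from $V(K_i) \subseteq V_i$ and the induced nature of $L_i$, and properness of $L_i \subsetneq L$ follows from the fact that $V(K_{1-i}) \subseteq V_{1-i}\setminus S$ is non-empty.

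The main obstacle, as I see it, is twofold. First, one must invoke the classical but non-trivial existence of the clique-tree decomposition of a chordal graph. Second, the edge-separation step in a tree requires that $T(V(K_0))$ and $T(V(K_1))$ behave enough like subtrees to be separable by a single edge; this is automatic when $V(K_0)$ and $V(K_1)$ induce connected subgraphs of $G$ (so that each $T(V(K_i))$ is itself a subtree), and the general case should be reducible to this by a connected-components argument. Everything beyond these two points is formal manipulation of induced subcomplexes in a flag complex.
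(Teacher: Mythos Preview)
The paper states this lemma without proof, so there is nothing to compare against directly; I will simply assess your argument.

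Your clique-tree approach is the standard route and is essentially correct \emph{when $K_0$ and $K_1$ are connected}. In that case each $T(V(K_i))$ is a genuine subtree of the clique tree $T$ (adjacent vertices of $G$ share a bag, so the union of the subtrees $T(v)$ over a connected vertex set is connected), two disjoint subtrees of a tree are separated by a single edge, and the rest of your argument goes through verbatim. This connected case is all the paper ever uses: in the proof of Proposition~\ref{prop:structure_coherent_raags} the two subcomplexes are always a simplex and a single vertex.

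However, the lemma \emph{as literally stated}, with no connectedness hypothesis on $K_0$ and $K_1$, is false, so your hope that ``the general case should be reducible to this by a connected-components argument'' cannot succeed. Take $L$ to be the path $v_1\!-\!v_2\!-\!v_3\!-\!v_4\!-\!v_5$ (a chordal flag complex), $K_0=\{v_1,v_5\}$ and $K_1=\{v_3\}$. These are disjoint and non-adjacent. Any simplex $K$ in $L$ is a vertex or an edge; one checks directly that for every such $K$, any decomposition $L=L_0\cup L_1$ with $L_0\cap L_1=K$, $v_1,v_5\in L_0$ and $v_3\in L_1$ forces $L_1=K$, so $L_0=L$ is not proper. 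Concretely, $v_5\in L_0\setminus L_1$ forces the edge $v_4v_5$ and hence $v_4$ into $L_0$; $v_1\in L_0\setminus L_1$ forces $v_2\in L_0$; and $v_3\in L_1$ then has no room to acquire a neighbour in $L_1\setminus K$. In the clique-tree picture this is exactly the interleaving you worried about: $T(V(K_0))$ consists of the two endpoint bags and $T(V(K_1))$ of the two middle bags, and no single edge of the path-shaped clique tree separates them.

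So: keep your proof, but add the hypothesis that $K_0$ and $K_1$ are connected (or note that this is the only case needed), and drop the final sentence about reducing the general case.
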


\begin{proposition}\label{prop:structure_coherent_raags}
    Let $A_L$ be a \raag where the defining flag complex $L$ is connected and chordal. Then $A_L$ admits a graph-of-groups splitting $A_L \cong \pi_1(\mathbb{X})$ with the following properties: 
    \begin{enumerate}
        \item the underlying graph $X$ of $\mathbb{X}$ is a finite tree, and 
        \item each vertex and edge group in $\mathbb{X}$ corresponds to a parabolic subgroup determined by a subcomplex $L_0$ of $L$ which is a simplex. If $A_L \not\cong \Z$ then each vertex group is free abelian of rank at least two. 
    \end{enumerate}
\end{proposition}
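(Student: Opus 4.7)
The plan is to induct on $|V(L)|$, strengthening the inductive hypothesis so that, in addition to the two listed properties, for every simplex $\sigma \subseteq L$ the parabolic subgroup $A_\sigma$ is contained in some vertex group of $\mathbb{X}$. This extra clause is what tells us where to attach new leaves; I expect it to be the crucial bookkeeping device that makes the induction close.

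The base case is $L$ a simplex (including a single vertex): take the trivial one-vertex tree with group $A_L$, and all properties are immediate. For the inductive step, I assume $L$ is connected, chordal, flag, and not a simplex. Connectedness then forces $|V(L)| \geq 3$, since on one or two vertices any connected flag complex is a simplex. I invoke the classical fact that every chordal graph admits a simplicial vertex to pick $v \in V(L)$ with $\link(v)$ a simplex, and set $L' := L|_{V(L) \setminus \{v\}}$. Then $L'$ is chordal and flag as an induced subcomplex, and it is connected because any path in $L$ through $v$ can be shortcut via an edge between two neighbors of $v$ (which are adjacent as $N(v)$ is a clique); in particular $\link(v) \neq \emptyset$, so $\mathrm{Star}(v) := \{v\} \cup \link(v)$ is a simplex of dimension $\geq 1$.

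Applying the inductive hypothesis to $L'$ yields a tree of groups $\mathbb{X}'$ with $\pi_1(\mathbb{X}') \cong A_{L'}$ and the strengthened invariant. Since $\link(v) \subseteq L'$ is a simplex, the parabolic $A_{\link(v)}$ sits inside some vertex group $G_u$ of $\mathbb{X}'$. The standard RAAG amalgamation, obtained from $L = L' \cup \mathrm{Star}(v)$ with $L' \cap \mathrm{Star}(v) = \link(v)$ as induced subcomplexes (every edge of $L$ incident to $v$ lies in $\mathrm{Star}(v)$), gives
\[
A_L \;\cong\; A_{L'} \ast_{A_{\link(v)}} A_{\mathrm{Star}(v)}.
\]
I then extend $\mathbb{X}'$ to $\mathbb{X}$ by attaching a new leaf vertex $w$ with group $A_{\mathrm{Star}(v)}$ to $u$ via a new edge with group $A_{\link(v)}$, the edge homomorphisms being the parabolic inclusions. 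Then $\pi_1(\mathbb{X}) \cong A_L$, the underlying graph is still a tree, and all vertex and edge groups are simplex parabolics. The new vertex group has rank $|\link(v)| + 1 \geq 2$; the existing vertex groups of $\mathbb{X}'$ have rank $\geq 2$ by the inductive hypothesis, since $|V(L')| \geq 2$ forces $A_{L'} \not\cong \Z$. The strengthened invariant also survives: a simplex $\sigma \subseteq L$ either misses $v$ and lies inside some vertex group of $\mathbb{X}'$ by the IH, or contains $v$, in which case $\sigma \subseteq \mathrm{Star}(v)$ and $A_\sigma \subseteq G_w$.

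The main obstacle is really the bookkeeping rather than any single deep idea. The two ingredients I import are classical: the existence of a simplicial vertex in a chordal graph, and the amalgamation identity for a RAAG along an induced union with simplex intersection. The care lies in maintaining the strengthened invariant so that the attaching vertex $u$ can be located at each inductive step; without it, the induction would not close cleanly. An alternative route, more directly in the spirit of the preceding \cref{prop_chordal}, would be to pick an induced separating simplex $K$ and split $A_L$ as a genuine amalgamated product $A_{L_0} \ast_{A_K} A_{L_1}$; this is workable but requires choosing $K$ minimal to guarantee that $L_0$ and $L_1$ remain connected, so the simplicial-vertex approach seems cleaner.
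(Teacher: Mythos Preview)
Your proof is correct and takes a genuinely different route from the paper's. The paper argues top-down using \cref{prop_chordal}: whenever a vertex group $A_{L_0}$ is not yet abelian, it chooses a separating simplex $K_1 \subseteq L_0$ that separates the already-present edge simplex $K_0$ from some vertex $v \notin K_0$, so that $A_{K_0}$ stays elliptic in the refined splitting of $A_{L_0}$; iterating this refinement terminates in a tree of simplex parabolics. You instead argue bottom-up via a perfect elimination ordering: peel off a simplicial vertex $v$, apply the inductive hypothesis to $L' = L|_{V(L)\setminus\{v\}}$, and reattach $A_{\mathrm{Star}(v)}$ as a leaf, with your strengthened invariant (every simplex of $L'$ lies in some vertex simplex of $\mathbb{X}'$) locating the attaching point. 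Both approaches are standard chordal-graph machinery; yours is arguably cleaner and more explicitly constructive (it essentially builds a clique tree one leaf at a time), while the paper's has the virtue of using only the separating-simplex lemma already on hand and making the ellipticity bookkeeping visible at the level of Bass--Serre trees. Your closing remark about the alternative via separating simplices is exactly the paper's argument, and your caution about needing $K$ minimal to keep $L_0, L_1$ connected is well placed---the paper sidesteps this by not requiring connectedness of the pieces, instead always choosing the next separating simplex relative to the existing edge simplex.
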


\begin{proof}
    If $L$ is a simplex then $A_L$ is free abelian and the result is obviously true. 

    Suppose that $L$ is not a simplex. Then there exist two non-adjacent vertices in $L$. Hence, by \cref{prop_chordal}, there exists a separating simplex $K \subseteq L$. Let $L_0$ and $L_1$ be the induced subcomplexes of $L$ such that $L_0 \cap L_1 = K$ and $L = L_0 \cup L_1$. Then $A_L$ splits as an amalgamated free product $A_{L} \simeq A_{L_0} \ast_{A_{K}}  A_{L_1}$. Since $K$ is an induced subcomplex of $L_0$ and $L_1$, the free basis of $A_{K}$ embeds in the standard bases of the right angled Artin groups $A_{L_0}$ and $A_{L_1}$. 

    We continue refining the splitting in this way until all the vertex groups are free abelian. 
\end{proof}



\begin{corollary}\label{prop_subgroup_Betti_numbers}
    Let $L$ be a chordal flag complex and $A_L$ the corresponding \raagstop Let $H \leq A_L$ be a non-trivial finitely generated subgroup. Then $H$ is of type $\F$ and the $L^2$-Betti numbers of $H$ satisfy $b_p^{(2)}(H) = 0$ for every $p \neq 1$.
\end{corollary}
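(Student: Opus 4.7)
The plan is to use the tree-of-abelian-groups decomposition of $A_L$ from Proposition~\ref{prop:structure_coherent_raags} and Bass--Serre theory to transfer structural and homological information to $H$.

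First I would reduce to the case where $L$ is connected: if $L$ has connected components $L_1, \ldots, L_k$, then $A_L \cong A_{L_1} \ast \cdots \ast A_{L_k}$, so I can glue the tree-of-groups decompositions of the individual $A_{L_i}$ (given by Proposition~\ref{prop:structure_coherent_raags}) along a new tree with trivial edge groups. The result is a presentation $A_L \cong \pi_1(\mathbb{X})$ in which $\mathbb{X}$ is a finite tree of groups whose vertex groups are finitely generated free abelian and whose edge groups are free abelian or trivial.

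Let $T$ denote the Bass--Serre tree of $\mathbb{X}$; then $A_L$ acts on $T$ cocompactly. Since the vertex stabilizers are Noetherian (abelian), a standard Bass--Serre argument shows that the finitely generated subgroup $H$ admits a finite graph-of-groups decomposition $\mathbb{X}_H$ whose vertex and edge groups are intersections of $H$ with conjugates of the vertex/edge groups of $\mathbb{X}$. In particular these are finitely generated free abelian (possibly trivial). Because each finitely generated free abelian group admits a finite classifying space (a torus), the graph-of-spaces construction over the finite underlying graph of $\mathbb{X}_H$ produces a finite $K(H,1)$, so $H$ is of type $\F$.

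For the $L^2$-Betti numbers I would invoke the Mayer--Vietoris long exact sequence of $\mathcal{N}(H)$-modules associated to the $H$-action on the Bass--Serre tree of $\mathbb{X}_H$ (see L\"uck \cite{Lueck2002}),
\[ \cdots \to \bigoplus_{e} H_n^{(2)}(G_e) \to \bigoplus_{v} H_n^{(2)}(G_v) \to H_n^{(2)}(H) \to \bigoplus_{e} H_{n-1}^{(2)}(G_e) \to \cdots, \]
where compatibility of $L^2$-Betti numbers with induction from subgroups identifies the $\mathcal{N}(H)$-dimensions of the vertex and edge terms with $b_n^{(2)}(G_v)$ and $b_n^{(2)}(G_e)$. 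By Cheeger--Gromov every nontrivial finitely generated free abelian group has all $L^2$-Betti numbers zero, and the trivial group has $b_0^{(2)} = 1$ with $b_i^{(2)} = 0$ for $i \geq 1$. Hence for $n \geq 2$ both flanking terms vanish, forcing $b_n^{(2)}(H) = 0$. At $n = 0$, nontriviality and torsion-freeness of $H$ force $H$ to be infinite, so $b_0^{(2)}(H) = 0$.

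Finally, since $H$ is of type $\F$, Atiyah's $L^2$-index formula gives
\[ \chi(H) = \sum_{i \geq 0}(-1)^i b_i^{(2)}(H) = -b_1^{(2)}(H) \leq 0. \]
The main technical point I expect to have to justify carefully is the Bass--Serre argument producing a \emph{finite} graph-of-groups decomposition of $H$ with free abelian vertex and edge groups; this is standard but relies crucially on the Noetherian nature of the abelian stabilizers appearing in the splitting of $A_L$.
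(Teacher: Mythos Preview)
Your proposal is correct and follows essentially the same route as the paper: both arguments invoke the tree-of-abelian-groups splitting from Proposition~\ref{prop:structure_coherent_raags}, pass to the induced graph-of-groups decomposition of $H$ via Bass--Serre theory, deduce type~$\F$ from the fact that the vertex and edge groups are finitely generated free abelian, and then run Mayer--Vietoris to kill $b_i^{(2)}(H)$ for $i\geq 2$ (with $b_0^{(2)}(H)=0$ since $H$ is infinite). The only organisational difference is that the paper treats the disconnected case separately, first passing through the free-product splitting $A_L\cong A_{L_1}\ast\cdots\ast A_{L_k}$ and then invoking the connected case on the vertex stabilisers, whereas you amalgamate everything into a single tree of abelian groups at the outset; both work.

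One small clarification: the finiteness of the underlying graph of $\mathbb{X}_H$ does not require the Noetherian hypothesis on stabilisers --- it follows already from $H$ being finitely generated, since a finitely generated group acting minimally on a tree always has finite quotient. The Noetherian property is what guarantees that the induced vertex and edge groups $H\cap gA_vg^{-1}$ are themselves finitely generated (hence of type~$\F$). Your final sentence suggests you are aware of this, but the phrasing earlier in the proof slightly misattributes the role of Noetherianity.
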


\begin{proof}
    Let us first assume that $L$ is connected. Let $H \leq A_L$ be a non-trivial finitely generated subgroup. Let $T$ be the Bass--Serre tree corresponding to the splitting of $A_L$ obtained in \cref{prop:structure_coherent_raags}. If $H$ is elliptic in $T$, then $H$ is a subgroup of a finitely generated free abelian group and thus it is itself finitely generated and free abelian. Hence $H$ is of type $\F$ and $b_p^{(2)}(H) = 0$ for all $p >0$. Moreover, since $H$ is non-trivial, it must be infinite and thus $b_0^{(2)}(H) = 0$.
    
    Suppose now that the action of $H$ on $T$ contains a hyperbolic element. Since $H$ is finitely generated, there exists a minimal subtree $T_0 \subseteq T$ which is preserved by $H$. The stabiliser of every edge and vertex in $T_0$ under the action of $H$ is also finitely generated and free abelian. Since $H$ is finitely generated, the action of $H$ on $T_0$ must be cocompact. It follows that $H$ splits as a finite graph of groups with edge and vertex groups finite rank free abelian and thus of type $\F$, and hence $H$ is of type $\F$. Applying the Mayer--Vietoris sequence to the splitting, we obtain that $b_p^{(2)}(H) = 0$ for all $p \geq 2$. Moreover, $H$ is infinite and thus $b_0^{(2)}(H) = 0$.

    Suppose that $L$ is disconnected and let $L_1, \ldots, L_k$ be the connected components of $L$. Let $T'$ be the Bass--Serre tree corresponding to the free splitting $A_L = A_{L_1} \ast \ldots \ast A_{L_k}$. If the subgroup $H \leq A_{L}$ is elliptic in the tree $T'$, then $A_L$ is a finitely generated subgroup of a \raag with a connected chordal defining complex. Thus, it is of type $\F$ and its $L^2$-homology is concentrated in the first degree by the previous paragraphs.

    Suppose now that the action of $H$ on $T'$ contains a hyperbolic element. Let $T_0' \subseteq T_0$ be the minimal subtree preserved by $H$. Then $H$ acts minimally on $T_0'$ with trivial edge stabilisers. Since $H$ is finitely generated, it must be the case that the action is cocompact and each vertex stabiliser is a finitely generated subgroup of a conjugate of $A_{L_i}$ for some $i$. Hence, each vertex stabiliser is of type $\F$ and is either trivial or its $L^2$-homology is concentrated in the first degree, by the previous arguments. It follows that $H$ is of type $\F$ and by the Mayer--Vietoris sequence it has $L^2$-homology concentrated in the first degree. 
\end{proof}

\begin{corollary}\label{cor_coherence_characterisation}
    Let $L$ be a flag complex and $A_L$ the corresponding \raagstop Then, the following are equivalent:
    \begin{enumerate}
        \item \label{4.7:it1} the flag complex $L$ is chordal;
        \item \label{4.7:it2} every finitely generated subgroup of the \raag $A_L$ is of type $\F$;
        \item \label{4.7:it3} every finitely generated subgroup of the \raag $A_L$ is finitely presented.
    \end{enumerate}
\end{corollary} 

\begin{proof}
    The implication \eqref{4.7:it1} $\Rightarrow$ \eqref{4.7:it2} is proved in \cref{prop_subgroup_Betti_numbers}, and \eqref{4.7:it2} $\Rightarrow$ \eqref{4.7:it3} is obvious. Finally, that \eqref{4.7:it3} $\Rightarrow$ \eqref{4.7:it1} is a classical result due to Droms \cite{Droms1987}.
\end{proof}

We finish this section by applying a similar argument to the proof of \cref{prop_subgroup_Betti_numbers} to study $L^2$-Betti numbers of kernels of homomorphisms $\phi \colon A_L \to \Z$, without any assumptions on the finiteness properties of $\ker \phi$.

\begin{lemma}\label{lemma_vanishing_l2_betti_numbers_kernel}
    Let $G$ be a coherent one-ended \raag and let 
    $\phi \colon G \to \Z$ be a non-trivial map. Then for every $p \neq 1$, $b_p^{(2)}(\ker \phi) = 0$.
\end{lemma}

\begin{proof}

    As in the proof of \cref{prop_subgroup_Betti_numbers}, let $T$ be the Bass--Serre tree corresponding to the splitting of $A_L$ obtained in \cref{prop:structure_coherent_raags} and consider the induced action of $\ker \phi$ on $T$. Since $\ker \phi$ is a normal subgroup of $G$, the induced action is minimal. Then, the action of $\ker \phi$ on $T$ induces a splitting of $\ker\phi$ as a graph of finite rank free abelian groups \[\ker \phi \cong \pi_1(\mathbb{X}).\]
    Pick any ascending chain of finite sub-graphs of groups of $\mathbb{X}$, 
    \[\mathbb{X}^{(0)} \subseteq \mathbb{X}^{(1)} \subseteq \ldots \subseteq \mathbb{X}\]
    such that $\mathbb{X} = \bigcup_{i} \mathbb{X}^{(i)}$.
    By the Mayer--Vietoris sequence, each group $\pi_1(\mathbb{X}^{(i)})$ has $L^2$-homology concentrated in the first dimension. Then, 
    \[H_p(\ker \phi ; \mathcal{D}_G) = \varinjlim H_p(\pi_1(\mathbb{X}^{(i)}); \mathcal{D}_G)\]
    where $\mathcal{D}_G$ is the Linnell ring of $G$. Hence for any $p \neq 1$, $H_p(\ker \phi ; \mathcal{D}_G) = 0$.

    Finally, since $\ker \phi$ is a subgroup of a right-angled Artin group, it is torsion free and satisfies the Atiyah conjecture. Hence, denoting by $\mathcal{D}_K$ the Linell ring of $\ker \phi$, we have that
    \[b_p^{(2)}(\ker \phi) = \mathrm{rank}_{\mathcal{D}_K} H_p(\ker \phi; \mathcal{D}_K) = \mathrm{rank}_{\mathcal{D}_G} H_p(\ker \phi; \mathcal{D}_G),\]
    where the first equality follows from \cref{lueck} and the second follows from the fact that $\mathcal{D}_G$ is a $\mathcal{D}_K$-module and thus, in particular, is flat over $\mathcal{D}_K$. Thus,  $b_p^{(2)}(\ker \phi) = 0$ for any $p \neq 1$.
    
\end{proof}

\subsection{$L^2$-polytopes of coherent one-ended \raags}

Let $L$ be a flag complex on at least two vertices and let $v \in V(L)$ be a vertex. We write $L \setminus \{v\}$ to denote the flag complex obtained from $L$ by removing every simplex of $L$ which contains $v$ as a face. We define the \emph{cut rank} of a vertex $v$ to be 
\[\cut_L(v) = \bar{b}_0 (L \setminus \{v\}) \in \NN.\]
Recall that for any complex $Y$, $\bar{b}_0(Y)$ is one less than the number of connected components of $Y$.

 A vertex $v$ of $L$ is said to be a \emph{cut vertex} if $\cut_L(v) > 0$.

\begin{proposition} \label{lemma:L2_polytope_RAAG}
    Let $A_L$ be a one-ended coherent \raag determined by the flag complex $L$ on at least two vertices. Then $A_L$ is $L^2$-acyclic and its $L^2$-polytope $\mathcal{P}(A_L) \in \mathfrak{P}_T(A_L)$ is represented by the polytope 
\[ \sum_{v \in V(L)} \cut_L(v) \cdot [o,\pi(g_v)],\]
where $\pi$ is the natural map
\[A_L \to   H_1(A_L ; \RR),\]
and $[o, \pi(g_v)]$ is the line joining the origin to $\pi(g_v)$ in $H_1(A_L ; \RR)$. In particular, $\mathcal{P}(A_L)$ is a single polytope. 
\end{proposition}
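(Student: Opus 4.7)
My plan is to prove both $L^2$-acyclicity and the polytope formula by simultaneous induction on the number of vertices $|V(L)|$. For the base case, connectedness and $|V(L)| = 2$ force $L$ to be an edge, so $A_L \cong \Z^2$ is $L^2$-acyclic with trivial $L^2$-polytope by \cref{lemma_polytope_abelian}, matching the formula (all cut ranks are zero). In the inductive step, if $L$ is a simplex then $A_L \cong \Z^{|V(L)|}$ is $L^2$-acyclic with trivial polytope, and every $\cut_L(v) = 0$ because $L \setminus \{v\}$ is still a non-empty simplex.

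The crucial case is when $L$ is not a simplex. Here I would use \cref{prop_chordal} to find a simplex $K_0$ in $L$ separating two non-adjacent vertices, and then, by partitioning the components of $L \setminus K_0$ (each of which touches $K_0$ since $L$ is connected), arrange $L = L_0 \cup L_1$ with $L_0 \cap L_1 = K_0$ and both $L_0, L_1$ connected chordal flag complexes on strictly fewer vertices than $L$, each with at least two vertices. Bass--Serre theory then yields the splitting $A_L \cong A_{L_0} \ast_{A_{K_0}} A_{L_1}$. The vertex and edge groups here are abelian (hence $L^2$-acyclic), and RAAGs satisfy the Atiyah conjecture, so \cref{lemma_sum_polytopes} applies and yields both $L^2$-acyclicity of $A_L$ (through the Mayer--Vietoris long exact sequence in $L^2$-homology) and the identity
\[\mathcal{P}(A_L) \;=\; \mathcal{P}(A_{L_0}) + \mathcal{P}(A_{L_1}) - \mathcal{P}(A_{K_0})\]
in $\mathfrak{P}_T(A_L)$. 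I would apply the inductive hypothesis to rewrite each $\mathcal{P}(A_{L_i})$ in the claimed form and invoke \cref{lemma_polytope_abelian}, which gives $\mathcal{P}(A_{K_0}) = -[o,\pi(g_w)]$ when $K_0 = \{w\}$ and $\mathcal{P}(A_{K_0}) = 0$ when $|V(K_0)| \geq 2$.

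What remains is a combinatorial comparison of cut ranks, which reduces to three identities: (i) $\cut_L(v) = \cut_{L_i}(v)$ for $v \in V(L_i) \setminus V(K_0)$; (ii) $\cut_L(v) = \cut_{L_0}(v) + \cut_{L_1}(v)$ for $v \in V(K_0)$ when $|V(K_0)| \geq 2$; and (iii) $\cut_L(w) = \cut_{L_0}(w) + \cut_{L_1}(w) + 1$ when $K_0 = \{w\}$. Each can be checked by tracking how the components of $L \setminus \{v\}$ are assembled from those of $L_0 \setminus \{v\}$ and $L_1 \setminus \{v\}$: whenever $K_0 \setminus \{v\}$ is non-empty it is a simplex, hence connected, forcing exactly the components on either side that meet it to fuse into a single component of $L \setminus \{v\}$; when instead $v = w$ and $K_0 = \{w\}$, the two sides become entirely disjoint, producing the extra $+1$ which matches precisely the contribution $-\mathcal{P}(A_{K_0}) = [o, \pi(g_w)]$ from the edge group.

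I expect the principal obstacle to be not the polytope algebra, which is essentially formal once the splitting is in place, but rather the combinatorial setup: arranging the decomposition $L = L_0 \cup L_1$ so both pieces are themselves connected (so that the inductive hypothesis applies), and carefully separating the rank-one case $|K_0| = 1$ (the only situation in which an edge group contributes a non-zero polytope) from the higher-rank case in both the polytope computation and the cut-rank identities.
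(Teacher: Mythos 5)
Your proof is correct, but it follows a genuinely different induction from the paper's. The paper first gets $L^2$-acyclicity for free from the contractibility of a connected chordal flag complex (via Davis--Leary/Fisher--Hughes--Leary, \cref{thm_FHL}), then inducts on the \emph{number of cut vertices}: the base case (no cut vertices) is handled by invoking Clay's theorem that $A_L$ then admits no splitting over $\Z$, so every edge group in the canonical decomposition of \cref{prop:structure_coherent_raags} is free abelian of rank at least two and contributes trivially; the inductive step splits off one cut vertex $v$ at a time via a star-shaped graph of groups with all edge groups equal to $\langle g_v\rangle$, so the only cut-rank bookkeeping needed is $\cut_{L_j}(w)=\cut_L(w)$ for $w\neq v$ and $\cut_{L_j}(v)=0$. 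You instead induct on $|V(L)|$, perform a single amalgamation over an arbitrary separating simplex $K_0$ at each step, and absorb all the case analysis into the three cut-rank identities (i)--(iii), which I have checked and which are correct. Your route avoids Clay's theorem entirely and makes transparent exactly when a rank-one edge group arises and why its polytope contribution $-\mathcal{P}(A_{K_0})=[o,\pi(g_w)]$ matches the $+1$ in identity (iii); the price is the slightly heavier combinatorial lemma and the need to arrange $L_0$, $L_1$ connected and induced, which you correctly flag and which does work (each component of $L\setminus K_0$ is adjacent to the connected simplex $K_0$). One slip to fix: the vertex groups $A_{L_0}$ and $A_{L_1}$ of your amalgam are \emph{not} abelian in general, only the edge group $A_{K_0}$ is; what you need --- and what your own induction already supplies --- is that they are $L^2$-acyclic of type $\F$ by the inductive hypothesis, after which Mayer--Vietoris gives $L^2$-acyclicity of $A_L$ and \cref{lemma_sum_polytopes} applies as you say.
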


\begin{proof}[Proof of \cref{lemma:L2_polytope_RAAG}]
    Since $A_L$ is one-ended and coherent, the flag complex $L$ is connected and chordal. It follows that $L$ is contractible. Thus, by \cref{thm_l2_RAAGs} all $L^2$-Betti numbers of $A_L$ vanish.

    If $L$ has exactly two vertices then each vertex of $L$ has cut rank equal to 0. Moreover, the group $A_L$ is free abelian of rank two and thus by \cref{lemma_polytope_abelian} the $L^2$-polytope of $A_L$ is trivial. Hence the Lemma holds when $|V(L)| = 2$.
    
Suppose that $L$ has at least three vertices. We argue by induction on the number of cut vertices in $L$. Consider the graph-of-groups splitting $A_L \cong \pi_1(\mathbb{X})$ obtained in  \cref{prop:structure_coherent_raags}. If $L$ has no cut vertices then by \cite[Theorem~A]{Clay2014}, $A_L$ does not split over $\Z$. Hence, each edge group in the splitting is free abelian of rank at least two. Since every right-angled Artin group satisfies the Atiyah conjecture, we can repeatedly apply \cref{lemma_sum_polytopes} to conclude that $\mathcal{P}(A_L)$ is of the form \[
   \sum_{v \in V(X)} \mathcal{P}(\mathbb{X}_v) - \sum_{e \in E(X)} \mathcal{P}(\mathbb{X}_e)
 \]
where each $\mathbb{X}_e$ and  $\mathbb{X}_v$ is free abelian and of rank at least two. Thus, by \cref{lemma_polytope_abelian}, the $L^2$-polytope of $A_L$ is represented by a difference of singletons and thus is trivial in $\mathfrak{P}_T(A_L)$.

    Let $n$ be a positive integer and suppose that the result holds for all one-ended coherent $A_L$ where $L$ has less than $n$ cut vertices. Suppose that $L$ has $n$ cut vertices and let $v \in V(L)$ be a cut vertex. Then, there exist induced subcomplexes $L_0, \ldots, L_{\cut_L(v)}$ of $L$, such that 
    \[ L = \bigcup_{0 \leq i \leq \cut_L(v)} L_{i},\]
    and $L_i \cap L_j = \{v\}$ for all $i \neq j$. 
    
    This induces a graph-of-groups splitting of $A_L$, where the underlying graph $\Gamma$ is a star with the central vertex and edges labelled by the infinite cyclic group $\langle g_v \rangle$, and the remaining vertices labelled by the \raags $A_{L_j}$ for all $0 \leq j \leq \cut_L(v)$. 
    Applying \cref{lemma_sum_polytopes} inductively to this splitting, we obtain 
    \[ \mathcal{P}(A_L) \simeq \left(\sum_{0 \leq j \leq \cut_L(v)} (\iota_j)_{*} \mathcal{P}(A_{L_j}) \right) - \cut_L(v) \cdot \iota_{*}\mathcal{P}(\langle g_v \rangle),\]
    where $\iota_{*} \colon \mathfrak{P}_T(\langle g_v \rangle) \to \mathfrak{P}_T(A_L)$ and $(\iota_j)_{*} \colon \mathfrak{P}_T(A_{L_j}) \to \mathfrak{P}_T(A_L)$, for $0 \leq j \leq \cut_L(v)$, denote the maps on polytope groups induced by subgroup inclusions.

    By \cref{lemma_polytope_abelian}, 
    \[ i_{*}\mathcal{P}(\langle g_v \rangle) \simeq - [o, \bar{g}_v],\]
    where $\bar{g}_v$ denotes the image of the element $g_v \in A_L$ in $H_1(A_L; \Z) \otimes_{\Z} \RR$.

    Note that for every subcomplex $L_j$, we have that $\cut_{L_j}(v) = 0$, and for every vertex $w$ of $L_j$ not equal to $v$, there is a bijection between the connected components of $L_j \setminus \{w\}$ and $L \setminus \{w\}$. Hence, we have that
    \[ \cut_{L_j}(w) = \cut_{L}(w).\]
    In particular, each $L_j$ has strictly less than $n$ cut vertices. Since each $L_j$ is a chordal and connected complex on at least two vertices, by induction it follows that
    \begin{equation}\begin{split}\mathcal{P}(A_{L_j}) &\simeq \sum_{w \in V(L_j)} \cut_{L_j}(w) \cdot [o, 
    \pi_j(g_w)] \\
    &= \sum_{w \in V(L_j) \setminus \{v\} } \cut_{L_j}(w) \cdot [o, 
    \pi_j(g_w)]\end{split}\end{equation}
    where $\pi_j \colon A_{L_j} \to H_1(A_{L_j}; \Z) \otimes_{\Z} \RR$ is the composition of the abelianisation map $A_{L_j} \twoheadrightarrow H_1(A_{L_j}; \Z)$ and the map $ H_1(A_{L_j}; \Z) \to H_1(A_{L_j}; \Z) \otimes_{\Z} \RR$. Moreover, for each $g_w$ we have that 
    \[(i_j)_{*}([o, \pi_j(g_w)]) = [o, \pi(g_w)]. \]
By combining the arguments above, it follows that 
    \[\mathcal{P}(A_L) = \sum_{w \in V(A_L) \setminus \{v\}} \cut_{L}(w) \cdot [o, \pi(g_w)] + \cut_{L}(v) \cdot [o, \pi(g_v)].\qedhere \]\end{proof}

    \begin{remark}
    After writing this article, we learned that Okun--Schreve computed the $L^2$-polytope for all $L^2$-acyclic right-angled Artin groups in \cite[Corollary~6.5]{OkunSchreve2024}. Since the methods employed in this article are different to those in \cite{OkunSchreve2024}, we decided to keep the proof of \cref{lemma:L2_polytope_RAAG}.
\end{remark}

    \medskip

\begin{remark} We remark that the $L^2$-polytope of coherent \raags in general does not detect algebraic fibering of characters $A_L \to \Z$ (e.g. in the sense of \cite[Theorem~5.29]{Kielak2020}). 

Indeed, let $L$ be the flag complex obtained by gluing two 2-simplices along an edge. The corresponding \raag is given by \[A_L \cong \Z^3 \ast_{\Z^2} \Z^3.\] The group $A_L$ is one-ended and coherent. The cut rank of each vertex in $L$ is zero and thus by \cref{lemma:L2_polytope_RAAG} the $L^2$-polytope of $A_L$ is trivial.

    Let $v_1$ and $v_2$ denote the endpoints of the common edge of the two 2-simplices in $L$, and let $w_1$ and $w_2$ be the remaining two vertices. Then, the set $\{g_{v_1}, g_{v_2}, g_{w_1}, g_{w_2}\}$ descends to a free generating set of the abelianisation of $A_L$, and 
    \[ H^1(A_L; \RR) \simeq (g_{v_1})^{*} \RR \oplus (g_{v_2})^{*} \RR \oplus (g_{w_1})^{*} \RR \oplus (g_{w_2})^{*} \RR.\]
 By \cref{lemma_fibered_characters}, the first Sigma invariant of $A_L$ is
\[\Sigma^1(A_L)  = \{ \phi \in \Hom(A_L, \RR) - 0 \mid \phi(g_{v_1}) \neq 0 \text{ or }\phi(g_{v_2})\neq 0 \}.\]
In particular, a non-empty proper subset of characters of $A_L$ is algebraically fibered. Meanwhile, $\mathcal{P}(A_L)$ is trivial and thus it does not encode any information that distinguishes the characters.
\end{remark}

\medskip

\begin{proposition}\label{prop:l2_betti_numbers_general_kernels}
     Let $G$ be a coherent one-ended \raag and $\phi \colon G \to \Z$ an epimorphism. Then, for every non-negative integer $p \geq 0$, \[ b_p^{(2)}(\ker \phi) = \left\{\begin{array}{lr}
        0, & \text{if }  p \neq 1\\
        \sum_{v \in V(L)} |\phi(v)|\bar{b}_{0}(\mathrm{Lk}(v)), & \text{if } p = 1.\\
        \end{array}\right. \]
\end{proposition}

\begin{proof}

Note that since $G$ is assumed to be one-ended, $L$ has at least two vertices. The vanishing of $b_p^{(2)}(\ker \phi)$ is proved in \cref{lemma_vanishing_l2_betti_numbers_kernel}. In particular, we have that
    \[ \chi^{(2)}(\ker \phi) = - b_1^{(2)}(\ker \phi).\]
Hence by \cref{prop:L2Euler}, 
    \[ \mathrm{th}_{\phi}(\mathcal{P}(G)) = b_1^{(2)}(\ker \phi),\]
    where $\mathcal{P}(G)$ denotes the $L^2$-polytope of $G$.
    Moreover, by \cref{lemma:L2_polytope_RAAG}, the $L^2$-polytope $\mathcal{P}(G)$ is represented by the polytope 
    \[ \sum_{v \in V(L)} \cut_L(v) \cdot [o,\pi(g_v)] \\
= \sum_{v \in V(L)} \bar{b}_0(\mathrm{Lk}(v)) \cdot [o,\pi(g_v)],\]
where $\pi(g_v)$ is the image of the generator $g_v$ of $A_L$ corresponding to $v$ under the natural map $A_L \to   H_1(A_L ; \RR)$ and $[o,\pi(g_v)]$ is the line segment joining the origin to $\pi(g_v)$. It follows that for any epimorphism $\phi \colon G \to \Z$,
\[\mathrm{th}_{\phi}(\mathcal{P}(G)) = \sum_{v \in V(L)} \bar{b}_0(\mathrm{Lk}(v)) \cdot |\phi(g_v)|.\]
The result now follows.
\end{proof}

Fisher--Hughes--Leary show that for any right-angled Artin group $A_L$ and any homomorphism $\phi \colon A_L \to \Z$ with kernel of type $\FP_n(\mathbb{Q})$, the $n$-th $L^2$-Betti number of the kernel is given by
      \begin{equation}\label{eq:l2} b_n^{(2)}(\ker \phi) = \sum_{v \in V(L)} |\phi(v)|\bar{b}_{n-1}(\mathrm{Lk}(v)).\end{equation}
In \cref{prop:l2_betti_numbers_general_kernels}, we show that \eqref{eq:l2} holds for the kernel of any epimorphism $\phi \colon A_L \to \Z$ where $A_L$ is one-ended and coherent, without any restrictions on the finiteness properties of the kernel.

Note also that by \cref{prop:L2Euler}, for any $L^2$-acyclic \raag and any epimorphism $\phi \colon A_L \to \Z$, the $L^2$-Betti numbers of $\ker \phi$ are finite. This leads us to ask the following:

\begin{question}
    Let $A_L$ be an $L^2$-acyclic \raag and $\phi \colon A_L \to \Z$ an epimorphism. Is it always true that 
    \[b_n^{(2)}(\ker \phi) = \sum_{v \in V(L)} |\phi(v)|\bar{b}_{n-1}(\mathrm{Lk}(v))?\]
\end{question}

\medskip

By \cref{thm_l2_RAAGs}, the $L^2$-Euler characteristic of a \raag defined by the flag complex $L$ is 
\[\chi(A_L) = 1 - \chi(L) = - \widebar{\chi}(L).\]  Combining this observation with \cref{prop:l2_betti_numbers_general_kernels} we obtain the following:

\begin{corollary}\label{cor:l2_Euler_norm}
    Let $G$ be a coherent one-ended \raag and $\phi \colon G \to \Z$ an epimorphism. Then,
    \[\chi^{(2)}(\ker \phi) = \sum_{v \in V(L)} |\phi(g_v)| \chi(A_{\mathrm{Lk}(v)}).\]
\end{corollary}

\section{Splitting complexity and $L^2$-Euler characteristic}

\subsection{Efficient splittings}

A graph of groups $\mathbb{X}$ is said to be of \emph{finite type} if the underlying graph is finite and every edge and vertex group is of type $\FP$. 
 


Let $\phi \colon G \to \Z$ be a homomorphism. A graph-of-groups splitting $G \cong \pi_1(\mathbb{X})$ is said to be \emph{dual} to $\phi$ if every vertex group is identified with a subgroup of $\ker \phi$.

\begin{definition}\label{def:complexity_of_Gtree} Let $G$ be a group and let $\phi \colon G \to \Z$ be a homomorphism. 
   Let $\mathbb{X}$ be a graph of groups of finite type with a single vertex. The \emph{complexity} of a graph-of-groups splitting $G \cong \pi_1(\mathbb{X})$ which is dual to $\phi$ is
\[c(\mathbb{X}; \phi) = - \sum_{e \in E^{+}(X)} |\phi(t_e)|\chi(\mathbb{X}_e),\]
where $t_e \in G$ is the group element corresponding to the edge $e \in E^{+}(X)$.
\end{definition} 
Note that if the graph of groups $\mathbb{X}$ has no edges then $c(\mathbb{X}; \phi) = 0$.

\begin{remark}
    There is a natural extension of \cref{def:complexity_of_Gtree} to graph-of-groups splittings $G \cong \pi_1(\mathbb{X}, \mathcal{T})$ where the underlying graph can have more than one vertex. In the setting where all possible edge groups have non-positive Euler characteristic, collapsing the non-loop edges in a graph of groups with more than one vertex results in a splitting with the same or lower complexity. Thus, we only consider single-vertex splittings. 
\end{remark}

The aim of this subsection is to prove the following:

\begin{proposition}\label{thm_realising_splittings}
    Let $L$ be a chordal flag complex. Let $G = A_L$ be the \raag defined by $L$ and let $\phi \colon G \to \Z$ be a non-trivial homomorphism. Then, there exists a single-vertex graph-of-groups splitting $G \cong \pi_1(\mathbb{X})$ of finite type which is dual to $\phi$, and such that
    \[ c(\mathbb{X}; \phi) = - \sum_{v \in V(L)} |\phi(g_v)|\chi(A_{\mathrm{Lk}(v)}).\]
\end{proposition}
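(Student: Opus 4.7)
The plan is to induct on $|V(L)|$, splitting into cases based on whether $\ker \phi$ is of type $\FP$. In the base case $L$ is a single vertex $v$, we have $A_L \cong \Z$ and the single HNN extension $\Z \cong \{1\} *_{\{1\}, g_v}$ is a single-vertex splitting dual to $\phi$ of complexity $-|\phi(g_v)|$, matching the target since $\mathrm{Lk}_L(v) = \emptyset$ and $\chi(A_{\emptyset}) = 1$.

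For the inductive step, set $V_+ := \{v \in V(L) : \phi(g_v) \neq 0\}$. The cleanest case is when $\ker \phi$ is of type $\FP$: by \cref{lemma_fibered_characters} and \cref{cor_coherence_characterisation} this happens precisely when the living subcomplex $\mathcal{L}_\phi$ is connected and dominating in $L$. In that case, I take the single HNN extension $A_L \cong \ker \phi *_{\ker \phi, t}$ for any $t \in A_L$ with $\phi(t)$ generating the image of $\phi$. This is a single-vertex graph-of-groups splitting of finite type dual to $\phi$, with complexity $-|\phi(t)| \cdot \chi(\ker \phi)$. After rescaling $\phi$ to be surjective (which affects neither the target formula nor the kernel), this equals $-\chi(\ker \phi) = -\chi^{(2)}(\ker \phi)$ by Atiyah's theorem for type-$\FP$ groups, which by \eqref{eq:L2_Euler_kernel} coincides with the target.

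When $\ker \phi$ is not of type $\FP$, I exploit the chordal structure of $L$. If $L$ is disconnected, then $A_L$ splits as a free product of smaller RAAGs and the claim follows by applying induction to each factor and combining via a free product of single-vertex splittings. Otherwise $L$ is connected and $\mathcal{L}_\phi$ fails to be connected or dominating. If $\mathcal{L}_\phi$ is not dominating, there exists $u \in V(L) \setminus V_+$ whose link $\mathrm{Lk}_L(u)$ lies entirely in $V(L) \setminus V_+$, and the HNN decomposition $A_L \cong A_{L \setminus u} *_{A_{\mathrm{Lk}_L(u)}, g_u}$ has edge group contained in $\ker \phi$; this outer HNN is compatible with an inductively constructed single-vertex splitting of $A_{L \setminus u}$ and can be combined with it into a single-vertex splitting of $A_L$. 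If $\mathcal{L}_\phi$ is disconnected, \cref{prop_chordal} produces a separating simplex $\sigma$ between two components of $\mathcal{L}_\phi$, giving $A_L \cong A_{L_0} *_{A_\sigma} A_{L_1}$, to which the procedure applies recursively.

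The main obstacle is the complexity bookkeeping in the non-$\FP$ case: one must verify that the inductive splittings on the pieces combine to yield a single-vertex splitting of complexity equal to the target
\[
-\sum_{v \in V(L)} |\phi(g_v)|\chi(A_{\mathrm{Lk}_L(v)}) \; = \; \sum_{v \in V(L)} |\phi(g_v)| \cut_L(v),
\]
where the second equality uses that each connected component of $\mathrm{Lk}_L(v)$ is a chordal flag complex, hence contractible, so $\chi(\mathrm{Lk}_L(v))$ equals the number of its components. This accounting reduces to matching contributions from cut vertices at each separating simplex or HNN reduction, and relies on the additivity of $\cut_L$ under the chordal tree decomposition provided by \cref{prop:structure_coherent_raags}.
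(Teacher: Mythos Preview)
Your inductive strategy is different from the paper's direct construction, and as written it has a genuine gap in the combination step.

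The problem is the claim that the outer HNN (or amalgam) ``is compatible with an inductively constructed single-vertex splitting'' and ``can be combined with it''. For this to work you need the edge group $A_{\mathrm{Lk}_L(u)}$ (respectively $A_\sigma$) to be \emph{elliptic} in the Bass--Serre tree of the inductively built splitting of $A_{L\setminus u}$ (respectively of $A_{L_0}$ and $A_{L_1}$): both edge inclusions must land, up to conjugacy, in the single vertex group. You never check this, and it is false in general for the splittings your recursion actually produces. Take $L$ the path $a\!-\!b\!-\!c\!-\!d\!-\!e\!-\!f$ with $\phi(g_a)=\phi(g_f)=1$ and $\phi(g_b)=\phi(g_c)=\phi(g_d)=\phi(g_e)=0$. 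Here $\mathcal{L}_\phi=\{a,f\}$ is neither connected nor dominating, and $c,d$ are the non-dominated vertices. If you remove $c$ first, your recursion on $A_{L\setminus c}$ (two components $\{a,b\}$ and $\{d,e,f\}$) eventually yields a single-vertex splitting with vertex group $\langle g_b\rangle * \langle g_e\rangle$ and $g_d$ appearing as a \emph{stable letter}. But the edge group of the outer HNN for $c$ is $A_{\mathrm{Lk}_L(c)}=\langle g_b,g_d\rangle$, and since $g_d$ is hyperbolic in that tree, $\langle g_b,g_d\rangle$ fixes no vertex. Removing $d$ first leads to the symmetric obstruction. So the naive ``combine'' step does not go through; fixing it would require either a much stronger inductive hypothesis (controlling which parabolics stay elliptic) or a different organization of the recursion.

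There is a second, related issue in your case 2c: nothing guarantees that the separating simplex $\sigma$ produced by \cref{prop_chordal} consists of dead vertices. If some vertex of $\sigma$ lies in $\mathcal{L}_\phi$ then $A_\sigma\not\leq\ker\phi$, hence $A_\sigma$ cannot be elliptic in any splitting dual to $\phi$, and the amalgam cannot be merged with the inductive pieces as you suggest.

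The paper avoids all of this by building the splitting globally rather than inductively. It decomposes $L$ into the one-neighbourhoods $N_1(B_i)$ of the living blocks $B_i$ together with the components $C_j$ of $L\setminus\bigcup B_i$, proves a lemma that every intersection $N_1(B_i)\cap C_j$ is connected (this is where chordality is used), and deduces that the incidence graph of these pieces is a \emph{tree}. This tree of parabolic subgroups is already a graph-of-groups decomposition of $A_L$; each $A_{N_1(B_i)}$ is algebraically fibered (since $B_i$ is connected and dominating in $N_1(B_i)$), so one replaces it by the HNN $\ker\phi_{B_i}*_{\ker\phi_{B_i}}$ and finally collapses all non-loop edges. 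Because the edge groups at each stage are explicit parabolics contained in the explicit vertex groups, no ellipticity verification is needed, and the complexity computation reduces directly to \eqref{eq:L2_Euler_kernel} applied to each $N_1(B_i)$.
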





Recall that if $L$ is a simplicial complex and $K$ is a subcomplex of $L$ then we write $L \setminus K$ to denote the simplicial complex obtained by removing every simplex in $L$ which contains a simplex of $K$ as a face.

For any simplicial complex $L$ and for any two vertices $x,y \in V(L)$, we write 
\[d_{L}(x,y) \in \Z \cup \{\infty\}\]
to denote the minimal number of edges in a path between $x$ and $y$ in the 1-skeleton $L^{(1)}$ of $L$. If no such path exists then we set $d_L(x,y) = \infty$. For any induced subcomplex $M$ of $L$, the \emph{one-neighbourhood of $M$ in $L$}, denoted by $N_1(M)$, is the induced subcomplex of $L$ spanned by the set of vertices of distance at most one from some vertex in $M$.

\begin{proof}[Proof of \cref{thm_realising_splittings}]
    We begin by assuming that the flag complex $L$ is connected.

    If $L$ consists of a single vertex $v$, then $A_L = \langle g_v \rangle \cong \Z$ and $\phi(g_v) = k \in \mathbb{Z} \setminus 0$. Hence, 
    \[ \sum_{v \in V(L)} |\phi(g_v)| \chi(A_{\mathrm{Lk}(v)}) = |k| \cdot \chi(A_{\emptyset}) = |k|.\]
    Now, set $\mathbb{X}$ to be a graph of groups where the underlying graph has a single vertex and single edge $e$, both labelled by the trivial group. Then, the splitting $A_L \cong \pi_1(\mathbb{X})$, where the edge loop $t_e$ is identified with the generator $g_v$ of $A_L$, is dual to $\phi$, and
    \[ c(\mathbb{X}; \phi) = - |k|.\] Hence, the result holds in this case. 

    Suppose now that $L$ has at least two vertices. A \emph{living block} in $L$ is defined to be a connected component of the living subcomplex $\mathcal{L}_{\phi}$, i.e. a maximal connected induced subcomplex $B$ of $L$ such that $\phi(g_v) \neq 0$ for every vertex $v$ in $L$. Let $\{B_i\}_{i \in I}$ be the set of living blocks in $L$, and let $\{C_j\}_{j\in J}$ be the connected components of the complement $L \setminus \bigcup_{i \in I} B_i$. For each living block $B$, let $N_1(B)$ be the one-neighbourhood of $B$ in $L$.

    \begin{lemma}
        If $C \in \{C_j\}_{j \in J}$ and $N_1(B) \cap C \neq \emptyset$ for some living block $B$, then $N_1(B)$ and $C$ intersect in a connected subcomplex of $L$.
    \end{lemma}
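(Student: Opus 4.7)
My plan is to argue by contradiction using a minimality argument together with the chordality of $L$. Assume that $N_1(B) \cap C$ is disconnected. Among all quadruples $(x, y, b_x, b_y)$ in which $x$ and $y$ lie in two distinct connected components of $N_1(B) \cap C$ and $b_x \in B \cap N_L(x)$, $b_y \in B \cap N_L(y)$, I would choose one that minimises the sum $d_C(x, y) + d_B(b_x, b_y)$. Such quadruples exist since $B$ and $C$ are connected and every vertex of $N_1(B) \cap C$ has a neighbour in $B$ by definition of $N_1(B)$.

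Next, I would fix geodesics $P \colon x = w_0, w_1, \ldots, w_m = y$ in the $1$-skeleton of $C$ and $Q \colon b_x = v_0, v_1, \ldots, v_n = b_y$ in the $1$-skeleton of $B$. Concatenating $P$, the edge $y\sim b_y$, the reversal of $Q$, and the edge $b_x \sim x$ produces a cycle $\Gamma$ of length $m + n + 2$ in the $1$-skeleton of $L$. The cycle is simple because $V(B) \cap V(C) = \emptyset$ (as $C \subseteq L \setminus \bigcup_i B_i$) and because $x \neq y$ (since they lie in distinct components). If $m + n + 2 = 3$ then $\Gamma$ is a triangle $\{x, y, b\}$, forcing the edge $x \sim y$; but this edge then lies inside the induced subcomplex $N_1(B) \cap C$, contradicting the fact that $x$ and $y$ belong to different components of that subcomplex.

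Otherwise $\Gamma$ has length at least $4$, so by chordality of $L$ the cycle admits a chord, and I would proceed with a case analysis. A chord inside $P$ is, since $C$ is an induced subcomplex, already an edge of $C$, contradicting geodesicity of $P$; symmetrically for $Q$. A chord $x \sim v_j$ with $j \geq 1$ produces a smaller admissible witness $(x, y, v_j, b_y)$ with $d_B(v_j, b_y) \leq n - j < n$, contradicting minimality; the chord $y \sim v_j$ with $j \leq n-1$ is symmetric, and the chord $x \sim y$ contradicts directly that $x$ and $y$ sit in distinct components. The principal case is a chord $w_i \sim v_j$ with $0 < i < m$: then $w_i \in N_1(B) \cap C$, and since its component must differ from at least one of those containing $x$ and $y$, one of the replacement pairs $(x, w_i, b_x, v_j)$ or $(w_i, y, v_j, b_y)$ is admissible with value bounded by $i + j$ or $(m-i) + (n-j)$, both strictly less than $m + n$ because $0 < i < m$, contradicting minimality. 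The main obstacle is the bookkeeping in this last case, where one must identify which of the two replacement pairs remains across distinct components; chordality of the flag complex $L$ is the essential input supplying the chord at every step.
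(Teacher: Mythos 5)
Your proof is correct and follows essentially the same strategy as the paper's: pick a minimal bad pair of vertices in distinct components of $N_1(B)\cap C$, join them by two geodesics closed up into a cycle, and use chordality of $L$ to produce a chord contradicting minimality. The only real difference is cosmetic --- you route the second geodesic through $B$ itself via the attaching vertices $b_x,b_y$ rather than through $N_1(B)$, which replaces the paper's case split on whether the chord's endpoint is living or dead by a purely positional case analysis.
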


    \begin{proof}
        
    Suppose this is not the case.  Let $x , y \in V(N_1(B)) \cap V(C)$ be contained in two distinct connected components and suppose that they minimise the quantity 
    \[d_{N_1(B)}(x,y) + d_C(x,y)\]
    over all such points $x,y$.
    
    Since $N_1(B)$ and $C$ are induced subgraphs, there is no edge joining $x$ to $y$ in $L$, since then it would be contained in $N_1(B) \cap C$, and thus $x$ and $y$ would be in the same connected component of $N_1(B) \cap C$. Let $\alpha$ be a shortest path in $N_1(B)$ joining $x$ to $y$, and similarly let $\beta$ be a shortest path in $C$. Let $\gamma$ be the loop at $x$ obtained by concatenating $\alpha$ followed by the inverse of $\beta$, which we denote by $\bar{\beta}$. The image of the loop $\gamma$ has at least four edges. Since $\alpha$ is the shortest path joining $x$ to $y$ in $N_1(B)$, there is no edge in $N_1(B) \setminus \mathrm{im}(\alpha)$ between two vertices of $\alpha$, and similarly for $\beta$. Then, since $L$ is chordal, there must exist an edge joining a vertex $u$ in the interior of $\alpha$ to a vertex $v$ in the interior of $\beta$. 
    
    Suppose first that $\phi(u) = 0$. Then, maximality of $C$ implies that $u \in C$. Hence, $u \in C \cap N_1(B)$, and we have that $d_{N_1(B)}(u,y) < d_{N_1(B)}(x,y)$, and \[\begin{split}d_C(u,y) &\leq d_C(u,v) + d_C(v,y)\\  &\leq d_C(x,y).\end{split}\]
    This contradicts the minimality of $d_{N_1(B)}(x,y) + d_C(x,y)$. Hence, it must be the case that $\phi(u) \neq 0$ and thus $u \in B$. However, since $d_L(u,v) = 1$ we have that $v \in N_1(B)$. Hence $v \in N_1(B) \cap C$. By the same arguments as above, we arrive at a contradiction in this case also.


    \end{proof}

    A similar argument shows that if $B$ and $B'$ are distinct living blocks then $N_1(B) \cap N_1(B')$ is connected. 

    \medskip

    Let $(X, V_0(X), V_1(X))$ be a bipartite graph where the vertices in $V_0(X)$ correspond to the subcomplexes $N_1(B_i)$ for all living blocks $B_i$, and the vertices in $V_1(X)$ correspond to the subcomplexes $C_j$. The edges between the vertices corresponding to $N_1(B_i)$ and $C_j$ are in one-to-one correspondence with the connected components of their intersection, $N_1(B_i) \cap C_j$. Note that the graph $X$ is simplicial since each intersection $N_1(B_i) \cap C_j$ is connected. Moreover, a loop in $X$ corresponds to an induced cycle in $L$. Since every cycle in a bipartite simplicial graph has length at least 4, it follows that $X$ must be a tree.

    We now define a graph of groups $\mathbb{X}$ with the underlying graph given by $(X, V_0(X), V_1(X))$, as follows. For each vertex $v$ of $X$, the vertex group $\mathbb{X}_v$ is the \raag determined by the corresponding flag complex, and similarly for each edge group. The edge inclusions are determined by the inclusions of the corresponding flag complexes. Then, there is a monomorphism 
    \[\pi_1(\mathbb{X}) \to A_L\]
    defined by subgroup inclusion on each vertex group. Since $L$ is covered by the subcomplexes $N_1(B_i)$ and $C_j$, the homomorphism is onto. 
    
    By construction, for each vertex $v \in V_1(X)$, the corresponding vertex group $\mathbb{X}_v$ is identified with a subgroup of $\ker \phi$. 
    
    Let $w \in V_0(X)$ be a vertex which corresponds to the subcomplex $N_1(B)$ in $L$, for some living block $B$. Let $\phi_B$ denote the restriction of the homomorphism $\phi$ to the subgroup $A_{N_1(B)}$, which we consider as an epimorphism onto its image. 

    Then, $\phi_B$ induces an HNN splitting of the \raag $A_{N_1(B)}$, with base the subgroup $\ker \phi_B = \ker \phi \cap A_{N_1(B)}$, and such that both of the edge inclusions are onto. Note that all the edge groups in the graph of groups $\mathbb{X}$ are contained in $\ker \phi$.  Hence, the incident edge groups at the vertex corresponding to $N_1(B)$ are contained in $\ker \phi \cap A_{N_1(B)}$. Thus, we may replace a vertex in $X$ labelled by the group $A_{N_1(B)}$ by an edge loop corresponding to the HNN extension $A_{N_1(B)} = \ker \phi_B \ast_{\ker \phi_B}$. We repeat this procedure at every vertex, eventually arriving at a graph of groups $\mathbb{X}'$ where every vertex group is contained in $\ker \phi$.

    Finally, we collapse every non-loop edge in the graph of groups $\mathbb{X}'$ to obtain a graph of groups $\mathbb{X}''$ where the underlying graph $X''$ has a single vertex. The edge groups in $\mathbb{X}''$ are identified with the subgroups of $A_L$ of the form $\ker \phi_B$, as $B$ ranges over the living blocks. For any living block $B$, the kernel of $\phi_B$ is finitely generated by \cref{lemma_fibered_characters}. Since $N_1(B)$ is an induced subcomplex of a chordal complex, it is itself chordal and thus by \cref{cor_coherence_characterisation}, $\ker \phi_B$ is of type $\F$. Hence, the Bass--Serre tree $T$ corresponding to the graph of groups $\mathbb{X}''$ is of finite type and is dual to $\phi$.

    For each living block $B$, let $e_B$ denote the corresponding edge in $X''$ (with either orientation). Let $g_B \in \pi_1(\mathbb{X}'')$ denote the group element corresponding to the edge $e_B$ and let $k_B = |\phi(g_B)|$. Then, \[k_B\cdot \Z = \mathrm{Im}(\phi(A_{N_1(B)})) \leq \Z,\] and $k_B^{-1}\cdot \phi_B$ is an epimorphism. Hence, by \cref{cor:l2_Euler_norm} and \cref{lemma:type_F_euler}, we have that
    \[ \openup 2\jot
    \begin{split} \chi(  \ker \phi_B) &= \chi(\ker k_B^{-1} \cdot \phi_B) \\ &= k_B^{-1}\sum_{v \in V(N_1(B))} |\phi(g_v)| \chi(A_{\mathrm{Lk(v)}}) \\
    &= k_B^{-1}\sum_{v \in V(B)} |\phi(g_v)| \chi(A_{\mathrm{Lk(v)}}).\end{split}\]

       On the other hand, the complexity of the graph-of-groups splitting $A_L \cong \pi_1(\mathbb{X}'') $ is given by \[\begin{split}c(\mathbb{X}''; \phi) &= -\sum_{e \in E^{+}(X)} |\phi(t_e)| \cdot \chi(\mathbb{X}''_e) \\
        &= \sum_{B \text{ living block}}|\phi(t_e)| \cdot \chi(\ker \phi_B) \\
        &= \sum_{B \text{ living block}} \sum_{v \in V(B)} |\phi(g_v)| \chi(A_{\mathrm{Lk}(v)})  \\
        &= \sum_{v \in V(L)} |\phi(g_v)| \chi(A_{\mathrm{Lk}(v)}). \end{split}
        \] This completes the proof when $L$ is connected.

        \medskip

    Suppose now that $L$ has connected components $L_1, \dots, L_k$. For each component $L_i$, if $\phi(A_{L_i}) = 0$ then let $\mathbb{X}^{(i)}$ be a graph-of-groups with a single vertex labelled by $A_{L_i}$. Otherwise,
let $\mathbb{X}^{(i)}$ be the graph-of-groups splitting of $A_{L_i}$ of finite type and that is is dual to $\phi|_{A_{L_i}} \colon A_{L_i} \to \Z$, as constructed in the previous part of the proof. 
    
    Construct a graph of groups $\mathbb{X}$ by taking the disjoint union of the $\mathbb{X}^{(i)}$ and a single vertex $v_0$ labelled by the trivial group. Add a directed edge $e_i$ from $v_0$ to the unique vertex of $\mathbb{X}^{(i)}$ for every $1 \leq i \leq k$. Label each edge $e_i$ with the trivial group. Then,
    \[ \pi_1(\mathbb{X}) \cong A_{L_1} \ast \cdots \ast A_{L_k},\]
    and the splitting is dual to $\phi$.
    Moreover, the graph of groups $\mathbb{X}$ is of finite type. We collapse the non-loop edges to obtain a graph of groups of finite type with a single vertex, which we continue denoting by $\mathbb{X}$.

    Then, the splitting complexity of $\mathbb{X}$ is  

    \[ \begin{split}c(\mathbb{X}; \phi) &= - \sum_{1 \leq i \leq k} \sum_{e \in E^{+}(X^{(i)})} |\phi(t_e)| \chi(\mathbb{X}_e) \\
    &= \sum_{1 \leq i \leq k} c(\mathbb{X}^{(i)}; \phi) \\
    &= \sum_{1 \leq i \leq k} \sum_{v \in V(L_i)} | \phi(g_v)| \chi(A_{\mathrm{Lk}_{L_i}(v)}) \\
    &= \sum_{v \in V(L)} | \phi(g_v)| \chi(A_{\mathrm{Lk}_{L}(v)}). \qedhere
    \end{split}\] \end{proof}
\medskip

\subsection{$L^2$-Betti number bounds}

\begin{proposition}\label{prop_L2Betti}
    Let $G$ be a torsion-free group which satisfies the Atiyah conjecture and let $\phi \colon G \to \Z$ be an epimorphism. Let $G \cong \pi_1(\mathbb{X})$ be a graph-of-groups splitting where the underlying graph $X$ has a single vertex $v$ and for each edge $e \in E(X)$, $t_e \in G$ is the corresponding group element. Fix a positive integer $n$.
    
    Suppose that the vertex group $\mathbb{X}_v$ is identified with a finitely generated subgroup of $\ker \phi$ and $\phi(t_e) \neq 0$ for all $e \in E(X)$. Moreover, assume that $b_{n-1}^{(2)}(\mathbb{X}_e) = 0$ for all $e \in E(X)$ and $\sum_{e \in E^{+}(X)} \lb_n(\mathbb{X}_e) = \lb_n(\mathbb{X}_v)$. Finally, suppose that $b_{n+1}^{(2)}(H) = 0$ for every finitely generated subgroup $H \leq G$.
    
    Then, 
\[ \lb_n(\ker \phi) \leq \sum_{e \in E^{+}(X)} |\phi(t_e)| \cdot \lb_n(\mathbb{X}_e). \qedhere\]
\end{proposition}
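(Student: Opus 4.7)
The plan is to compute $b_n^{(2)}(\ker \phi)$ via the Mayer--Vietoris long exact sequence associated to the single-vertex graph-of-groups splitting, applied over the coefficient ring $\mathcal{N}(K)$ where $K = \ker \phi$. Specifically, I would build a $\Z G$-resolution $P_\ast$ of $\Z$ from the Bass--Serre double complex of $\mathbb{X}$, using finite projective resolutions $C_\ast^v$ and $C_\ast^e$ of $\Z$ over $\Z\mathbb{X}_v$ and $\Z\mathbb{X}_e$ (available since every vertex and edge group is of type $\FP$ by hypothesis). Tensoring $P_\ast$ with $\mathcal{N}(K)$ over $\Z K$ and applying Shapiro's lemma identifies the resulting homology with $H_\ast^{(2)}(K)$, so that $b_n^{(2)}(K) = \dim_{\mathcal{N}(K)} H_n(P_\ast \otimes_{\Z K} \mathcal{N}(K))$.

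The key structural observation is that the $\Z$-grading induced by $\phi$ organises the complex $P_\ast \otimes_{\Z K} \mathcal{N}(K)$ as a $\Z$-graded $\mathcal{N}(K)$-chain complex. After fixing a set-theoretic section $s$ of $\phi$, we have $\Z G = \bigoplus_{n \in \Z} \Z K \cdot s^n$, and each induced module $\mathrm{Ind}_{\mathbb{X}_v}^{G} C_\ast^v$ or $\mathrm{Ind}_{\mathbb{X}_e}^{G} C_\ast^e$ splits as an infinite direct sum indexed by $n \in \Z$. The vertex differentials preserve level, while the graph-of-groups differential arising from each edge $e$ shifts level by $\phi(t_e)$. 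The cokernel of such a shift-by-$k$ map on $\mathcal{N}(K)[\Z]$ has $\mathcal{N}(K)$-dimension $|k|$, by analogy with $\mathcal{N}(K)[t^{\pm}]/(t^k - 1) \cong \mathcal{N}(K)^{|k|}$. Summing contributions across edges, the edge-piece of degree-$n$ homology has $\mathcal{N}(K)$-dimension at most $\sum_e |\phi(t_e)| \cdot b_n^{(2)}(\mathbb{X}_e)$.

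It remains to control the vertex-piece. The hypothesis $b_{n-1}^{(2)}(\mathbb{X}_e) = 0$ annihilates the boundary contribution from the edges one degree lower, and the matching identity $\sum_e b_n^{(2)}(\mathbb{X}_e) = b_n^{(2)}(\mathbb{X}_v)$ shows, via a rank-equality argument in the Mayer--Vietoris long exact sequence, that the vertex-contribution to $H_n^{(2)}(K)$ cannot exceed the edge-contribution. The condition $b_{n+1}^{(2)}(H) = 0$ for all finitely generated $H \leq G$ is needed because $K$ itself may fail to be finitely generated: write $K = \varinjlim K_\alpha$ as a directed union of its finitely generated subgroups, apply the bound to each $K_\alpha$ (where the vanishing $b_{n+1}^{(2)}(K_\alpha) = 0$ kills an off-by-one degree term in the exact sequence), and then invoke L\"uck's continuity of $L^2$-Betti numbers under directed colimits of injections to pass to the limit.

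The main obstacle I foresee is making the shift-by-$\phi(t_e)$ dimension count rigorous. The Laurent polynomial heuristic $\mathcal{N}(K)[t^{\pm}]/(t^k - 1) \cong \mathcal{N}(K)^{|k|}$ illustrates where the factor $|\phi(t_e)|$ comes from, but the actual Bass--Serre differential is twisted by conjugation via $s$ (since $s^n \mathbb{X}_v s^{-n}$ is only conjugate, not equal, to $\mathbb{X}_v$) and further involves the chain-level representatives of the edge inclusions $\iota_e$ and $\iota_{\bar e}$. Carrying out the rank computation inside the twisted Laurent setting, using the structure $\mathcal{D}_G \cong \mathrm{Ore}(\mathcal{D}_K \ast_s \Z)$ noted in the paper, will be the technical heart of the proof.
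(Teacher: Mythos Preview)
Your intuition is correct, but the implementation has a genuine gap. When you tensor the Bass--Serre resolution with $\mathcal{N}(K)$ (or better, $\mathcal{D}_G$) and split along the $\Z$-grading, both the edge piece $\bigoplus_{n\in\Z}\bigoplus_e H_n(\mathbb{X}_e;\mathcal{D}_G)$ and the vertex piece $\bigoplus_{n\in\Z} H_n(\mathbb{X}_v;\mathcal{D}_G)$ are infinite-dimensional as soon as any $b_n^{(2)}(\mathbb{X}_e)>0$. The Laurent heuristic $\mathcal{D}_K[t^{\pm}]/(t^k-1)\cong \mathcal{D}_K^{|k|}$ is suggestive, but the actual Mayer--Vietoris differential is built from the edge-inclusion maps $H_n(\mathbb{X}_e;\mathcal{D}_G)\to H_n(\mathbb{X}_v;\mathcal{D}_G)$, and the hypothesis only matches \emph{dimensions}, not maps; you cannot conclude that the differential is ``shift minus identity'' on a free module. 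So there is no direct way to read off a finite $\mathcal{D}_G$-rank for $H_n(\ker\phi;\mathcal{D}_G)$ from this picture. Your proposed patch --- exhausting $K$ by arbitrary finitely generated subgroups $K_\alpha$ and invoking continuity --- does not help, because a general $K_\alpha$ inherits no graph-of-groups structure and there is no bound of the required shape available for it.

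The paper resolves exactly this issue by truncating the $\Z$-grading rather than the group. Your $\Z$-grading \emph{is} the quotient graph $Y=\ker\phi\backslash T$: it has vertex set $\{v_i:i\in\Z\}$ and, for each $e\in E^+(X)$, one edge from $v_i$ to $v_{i+\phi(t_e)}$ for every $i$. Restricting to the finite subgraph $Y^{(k)}$ spanned by $v_{-k},\dots,v_k$ produces finitely generated subgroups $\pi_1(\mathbb{Y}^{(k)})\leq\ker\phi$ whose union is $\ker\phi$. For these groups the Mayer--Vietoris sequence over $\mathcal{D}_G$ is between finite-rank modules; the hypotheses $b_{n-1}^{(2)}(\mathbb{X}_e)=0$ and $b_{n+1}^{(2)}(\pi_1(\mathbb{Y}^{(k)}))=0$ collapse it to a short exact sequence in degree $n$, and the combinatorial count ($2k+1$ vertices versus $2k+1-|\phi(t_e)|$ lifts of $e$) together with $\sum_e b_n^{(2)}(\mathbb{X}_e)=b_n^{(2)}(\mathbb{X}_v)$ yields $\operatorname{rank}_{\mathcal{D}_G}H_n(\pi_1(\mathbb{Y}^{(k)});\mathcal{D}_G)=\sum_e|\phi(t_e)|\,b_n^{(2)}(\mathbb{X}_e)$ exactly. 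The inequality then follows because homology commutes with the directed colimit and rank over a skew field can only drop in the limit. This is the rigorous incarnation of your shift-by-$|\phi(t_e)|$ heuristic; the truncation is the missing ingredient.
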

\begin{proof}

   Let $T$ be the Bass--Serre tree corresponding to the graph-of-groups splitting $G \cong \pi_1(\mathbb{X}, v)$. Consider the induced action of the subgroup $\ker \phi \leq G$ on the tree $T$ and let $Y:= \ker \phi \backslash T$ be the quotient. Let $\mathbb{Y}$ be the resulting graph of groups. 
  Fix a spanning tree $\mathcal{T} \subseteq Y$ and an identification $\ker \phi \simeq \pi_1(\mathbb{Y},\mathcal{T}).$ Since $\ker \phi$ is a normal subgroup of $G$, there is an induced action of the infinite cyclic quotient $H := G / \ker \phi$ on the graph $Y$ and the quotient graph $H \backslash Y$ is isomorphic to $X := G \backslash T$. 
  
  Let $\bar{t}$ denote a generator of $H$ and let $t \in G$ be a lift of $\bar{t}$ to $G$. Since the stabiliser of every vertex of $T$ under the action of $G$ is contained in $\ker \phi$, no power of $t$ fixes a vertex of $T$. Moreover, since $H \backslash Y \cong X$, there is a single orbit of vertices for the action of $H$ on $Y$.
 
 Let $v_0$ be a lift of $v$ to $Y$. Let $v_i := t^{i} \cdot v_0$ for each $i \in \Z$. Define $Y^{(k)}$ to be the subgraph of $Y$ spanned by the set of vertices $\{v_{i} \mid -k \leq i \leq k\}$. For every edge $e \in E(X)$, let $t_e \in G$ be the corresponding group element. The set of lifts of $e$ to $Y$ is the collection of edges connecting the vertex $v_i$ to $v_{i + \phi(t_e)}$ for each $i \in \Z$. Since $\mathrm{gcd}_{e\in E^{+}(X)}\{\phi(t_e) \} = 1$, there exists an integer $M > 0$ such that $Y^{(k)}$ is connected for all $k \geq M$. We will assume that $M > \mathrm{max}_{e\in E^{+}(X)}\{|\phi(t_e)|\}$.

 Hence, the graphs $Y^{(k)}$ for $k \geq M$ form a nested sequence
    \[Y^{(M)} \subseteq Y^{(M+1)} \subseteq Y^{(M+2)} \subseteq \ldots \]
of compact connected graphs, such that $\bigcup_{k \geq M} Y^{(k)} = Y$.
Let $\mathbb{Y}^{(k)}$ be the sub-graph-of-groups determined by the inclusion $Y^{(k)} \subseteq Y$. The subgraph $\mathcal{T}_k := \mathcal{T} \cap Y^{(k)}$ is a spanning tree of $Y^{(k)}$ and the fundamental group $\pi_1(\mathbb{Y}^{(k)}, \mathcal{T}_k)$ can be identified with a subgroup of $\ker \phi$. Hence, there is a chain of subgroups \[\pi_1(\mathbb{Y}^{(M)}, \mathcal{T}_M) \leq \pi_1(\mathbb{Y}^{(M+1)}, \mathcal{T}_{M+1}) \leq \pi_1(\mathbb{Y}^{(M+2)}, \mathcal{T}_{M+2}) \leq \ldots \] such that \[\ker \phi \cong \varinjlim_{k \geq M} \pi_1(\mathbb{Y}^{(k)}, \mathcal{T}_k).\] 
    
Since the homology functor commutes with direct limits, for each $n \geq 0$ we obtain \begin{equation}\label{eq:homology_direct_limit}H_n(\ker \phi ; \calD_G) \cong \varinjlim_{k\geq M} H_n(\pi_1(\mathbb{Y}^{(k)}, \mathcal{T}_k); \calD_G),\end{equation} where $\calD_G$ is the Linnell ring of $G$. Note that since $G$ is torsion free and satisfies the Atiyah conjecture, the Linnell ring $\calD_G$ is a division ring. 
    
    We claim that for every $k \geq M$, 
\[ \rank_{\calD_G}\, H_n(\pi_1(\mathbb{Y}^{(k)}, \mathcal{T}_k); \calD_G) = \sum_{e\in E^{+}(X)} |\phi(t_e)| \cdot \rank_{\calD_G}H_n(\mathbb{X}_e; \calD_G). \] Assuming that the claim holds, by \eqref{eq:homology_direct_limit} it follows that 
\begin{equation} \begin{split}\label{eq:inequality}\rank_{\calD_G}H_n(\ker \phi; \calD_G) &= \mathrm{rank}_{\calD_G}\varinjlim_{k\geq M} H_n(\pi_1(\mathbb{Y}^{(k)}, \mathcal{T}_k); \calD_G) \\
&\leq \mathrm{lim}_{k \geq M} \rank_{\calD_G}H_n(\pi_1(\mathbb{Y}^{(k)}, \mathcal{T}_k); \calD_G) \\
&= \sum_{e\in E^{+}(X)} |\phi(t_e)| \cdot \rank_{\calD_G}H_n(\mathbb{X}_e; \calD_G). \\ \end{split} \end{equation}

Note that for each edge group $\mathbb{X}_e$, the embedding $\mathbb{X}_e \hookrightarrow G$ extends to an embedding of the Linnell division rings $\calD_{\mathbb{X}_e} \hookrightarrow \calD_G$. Hence, the Linnell ring $\calD_G$ is a free $\calD_{\mathbb{X}_e}$-module and thus
\[  \mathrm{rank}_{\calD_G} H_n(\mathbb{X}_e; \calD_G) = \mathrm{rank}_{\calD_{\mathbb{X}_e}} H_n(\mathbb{X}_e ; \calD_{\mathbb{X}_e}) = b_n^{(2)}(\mathbb{X}_e),
\]
where the second equality follows by \cref{lueck}.
Combining this with \eqref{eq:inequality}, we obtain the inequality
\[ b_n^{(2)}(\ker \phi) \leq \sum_{e\in E^{+}(X)} |\phi(t_e)| \cdot b_n^{(2)}(\mathbb{X}_e).\] This completes the proof modulo the claim.

\medskip

For the claim, note that the graph $Y^{(k)}$ consists of $2k+1$ vertices. By induction, for every edge $e \in E^{+}(X)$, $Y^{(k)}$ contains $k_e$ lifts of the edge $e$, where \[k_e = \max\{0, 2k + 1 - |\phi(t_e)|\}.\] 

For each $e \in E^{+}(X)$, since $b_{n-1}^{(2)}(\mathbb{X}_e) = 0$ by assumption, arguing as above we have that $H_{n-1}(\mathbb{X}_e ; \calD_G)$ is trivial. Moreover, since $b_{n+1}^{(2)}(H) = 0$ for every finitely generated subgroup $H$ of $G$, we have that $H_{n+1}(\pi_1(\mathbb{Y}^{(k)}, \mathcal{T}_k); \calD_G )$ is also trivial. Thus, the Mayer--Vietoris sequence for the graph of groups $\mathbb{Y}^{(k)}$ gives rise to a short exact sequence,
\begin{equation}\label{eq:ses} 0 \to \oplus_{e \in E^{+}(X)} H_n(\mathbb{Y}_e ; \calD_G)^{k_e} \to \oplus_{-k \leq i \leq k}H_n(\mathbb{Y}_{v_i} ; \calD_G) \to H_n(\pi_1(\mathbb{Y}^{(k)}, \mathcal{T}_k); \calD_G ) \to 0.\end{equation}

Note that $\mathbb{Y}_{v_i} \cong \mathbb{X}_v$ for every $i \in \Z$, and $\mathbb{Y}_e \cong \mathbb{X}_e$ for every $e \in E^{+}(X)$. Hence, the short exact sequence \eqref{eq:ses} gives that 
\begin{multline}\label{eq:sum}(2k+1)\cdot \rankD H_n(\mathbb{X}_v ; \calD_G) = \rankD H_n(\pi_1(\mathbb{Y}^{(k)}, \mathcal{T}_k);  \calD_G) \\ + \sum_{e \in E^{+}(X)} k_e \cdot \rankD H_n(\mathbb{X}_e; \calD_G).\end{multline}
By assumption, there is an isomorphism 
\[ \oplus_{e \in E^{+}(X)} H_n(\mathbb{X}_e ; \calD_G) \cong H_n(\mathbb{X}_v; \calD_G).\]
Hence, 
\begin{equation}\label{eq:sum1} \rankD H_n(\mathbb{X}_v ; \calD_G) = \sum_{e\in E^{+}(X)} \rankD H_n(\mathbb{X}_e; \calD_G).\end{equation}
Finally, assuming that $k \geq \mathrm{max}_{e\in E^{+}(X)}\{|\phi(t_e)|\}$, we have that $k_e = 2k + 1 - |\phi(t_e)|$ for all $e \in E^{+}(X)$. Thus, by rearranging \eqref{eq:sum} and using \eqref{eq:sum1}, we arrive at the equality \[\rankD H_n(\pi_1(\mathbb{Y}^{(k)}), \calD_G) =  \sum_{e \in E^{+}(X)} |\phi(t_e)| \cdot \rankD H_n(\mathbb{X}_e; \calD_G). \qedhere\]
\end{proof}

\subsection{Thurston norm for groups}

\begin{definition}[Splitting complexity]\label{def:splitting_complexity}
    Let $G$ be a group and let $\phi \colon G \to \Z$ be a non-trivial homomorphism. The \emph{splitting complexity of $G$ along $\phi$} is defined to be 
    \[ c(G; \phi) = \inf_{\mathbb{X}} c(\mathbb{X}; \phi),\]
    where the infimum is taken over all single-vertex splittings $G \cong \pi_1(\mathbb{X})$ of finite type which are dual to $\phi$. If no such splitting exists then we set $c(G; \phi) = \infty$.
\end{definition}

\begin{lemma}\label{lemma:multiplicative}
    Let $G$ be a group and let $\phi, \psi \colon G \to \Z$ be homomorphisms. Suppose that $\psi$ is non-trivial and $\phi = k \cdot \psi$ for some $k 
    \in \Z \setminus \{0\}$. Then, 
    \[c(G; \phi) = |k| \cdot c(G, \psi).\]
    \end{lemma}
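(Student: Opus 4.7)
The plan is to unwind the definitions and observe that everything scales linearly with $|k|$. The key first step is to note that since $\Z$ is torsion-free and $k \neq 0$, we have $\ker \phi = \ker(k\psi) = \ker \psi$. Consequently, a graph-of-groups splitting $G \cong \pi_1(\mathbb{X})$ is dual to $\phi$ if and only if it is dual to $\psi$, since duality is defined in terms of vertex groups lying in the common kernel. Hence the infima in \cref{def:splitting_complexity} for $\phi$ and for $\psi$ are taken over the exact same collection of single-vertex finite-type graph-of-groups splittings of $G$.

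Next, I would compare the complexities term-by-term. Fix any such splitting $\mathbb{X}$ dual to $\psi$ (equivalently, dual to $\phi$). For every edge $e \in E^{+}(X)$ with associated stable letter $t_e \in G$, we have $\phi(t_e) = k \cdot \psi(t_e)$, so $|\phi(t_e)| = |k| \cdot |\psi(t_e)|$. Plugging into \cref{def:complexity_of_Gtree} gives
\[
c(\mathbb{X}; \phi) \;=\; -\sum_{e \in E^{+}(X)} |\phi(t_e)|\,\chi(\mathbb{X}_e) \;=\; -|k|\sum_{e \in E^{+}(X)} |\psi(t_e)|\,\chi(\mathbb{X}_e) \;=\; |k| \cdot c(\mathbb{X}; \psi).
\]

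Finally, taking the infimum over the common family of admissible splittings,
\[
c(G; \phi) \;=\; \inf_{\mathbb{X}} c(\mathbb{X}; \phi) \;=\; \inf_{\mathbb{X}} |k| \cdot c(\mathbb{X}; \psi) \;=\; |k| \cdot \inf_{\mathbb{X}} c(\mathbb{X}; \psi) \;=\; |k| \cdot c(G; \psi),
\]
where the factor $|k| > 0$ can be pulled out of the infimum, and the convention $|k| \cdot \infty = \infty$ handles the case when no admissible splitting exists.

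There is no real obstacle here: the statement is essentially a bookkeeping consequence of two facts, namely that scaling a non-trivial character preserves its kernel and that the complexity formula is homogeneous of degree one in $|\phi(t_e)|$. The only point that warrants a brief verbal remark is that duality and the finite-type condition on vertex/edge groups depend on $\phi$ only through $\ker \phi$, so the admissible families coincide.
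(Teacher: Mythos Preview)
Your proof is correct and follows essentially the same approach as the paper: both observe that $\ker\phi=\ker\psi$, so the admissible splittings coincide, and then compute $c(\mathbb{X};\phi)=|k|\cdot c(\mathbb{X};\psi)$ directly from the definition. Your version is slightly more explicit in pulling the constant through the infimum and handling the $\infty$ case, but there is no substantive difference.
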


\begin{proof}

Since $\ker \phi = \ker \psi$, any graph-of-groups splitting which is dual to $\phi$ also is dual to $\psi$, and vice versa. Moreover, if $G \cong \pi_1(\mathbb{X})$ is a single-vertex splitting of finite type which is dual to $\phi$, we have that
    \[\begin{split} c(\mathbb{X} ; \phi) &= - \sum_{e \in E^{+}(X)} |\phi(t_e)|\chi(\mathbb{X}_e) \\
    &=  - \sum_{e \in E^{+}(X)} |k| \cdot|\psi(t_e)|\chi(\mathbb{X}_e) \\
    &= |k| \cdot c(\mathbb{X}; \psi)\\
   \end{split}.\]
   This completes the proof.\end{proof}

   \medskip

\begin{theorem}\label{thm:L2_complexity}
    Let $G \not\cong \Z$ be a one-ended coherent \raag and let $\phi \colon G \to \Z$ be an epimorphism. Then the $L^2$-Euler characteristic of the kernel of $\phi$ satisfies
\[-\chi^{(2)}(\ker \phi) = b_1^{(2)}(\ker \phi) = c(G; \phi).\]
\end{theorem}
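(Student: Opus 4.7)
The plan is to prove the two equalities by sandwiching $c(G; \phi)$ between upper and lower bounds that both coincide with $-\chi^{(2)}(\ker \phi) = b_1^{(2)}(\ker \phi)$. For the initial identity, since $L$ is connected and chordal, every link $\mathrm{Lk}(v)$ is a chordal flag complex whose connected components are contractible (chordal flag complexes being dismantlable, hence collapsible), so $\bar{b}_j(\mathrm{Lk}(v)) = 0$ for all $j \geq 1$. Substituting into \cref{thm_FHL}, all $L^2$-Betti numbers $b_i^{(2)}(\ker \phi)$ with $i \geq 2$ vanish, and $b_0^{(2)}(\ker \phi) = 0$ as well (either from the same formula together with $\bar{b}_{-1}(\mathrm{Lk}(v)) = 0$ for the non-empty links arising when $L$ has at least two vertices, or because $\ker \phi$ is infinite since $G$ is one-ended and hence not virtually cyclic). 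Consequently $\chi^{(2)}(\ker \phi) = -b_1^{(2)}(\ker \phi)$.

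For the upper bound $c(G; \phi) \leq -\chi^{(2)}(\ker \phi)$, I would directly invoke \cref{thm_realising_splittings} to produce a single-vertex, finite-type splitting $\mathbb{X}$ dual to $\phi$ whose complexity is \[ c(\mathbb{X}; \phi) = -\sum_{v \in V(L)} |\phi(g_v)|\,\chi(A_{\mathrm{Lk}(v)}), \] and by \eqref{eq:L2_Euler_kernel} this is precisely $-\chi^{(2)}(\ker \phi)$.

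For the lower bound $c(G; \phi) \geq b_1^{(2)}(\ker \phi)$, I take an arbitrary single-vertex finite-type splitting $G \cong \pi_1(\mathbb{X})$ dual to $\phi$ and show $c(\mathbb{X}; \phi) \geq b_1^{(2)}(\ker \phi)$. First I absorb every edge $e$ with $\phi(t_e) = 0$ into the vertex group: each such edge contributes an HNN stable letter lying in $\ker \phi$, so the enlarged vertex group remains a finitely generated subgroup of $\ker \phi$, still of type $\FP$ by \cref{cor_coherence_characterisation}, and the complexity is unchanged. I may therefore assume $\phi(t_e) \neq 0$ for every edge. The only delicate hypothesis of \cref{prop_L2Betti} to verify is the equality $\sum_e b_1^{(2)}(\mathbb{X}_e) = b_1^{(2)}(\mathbb{X}_v)$. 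This I derive from Mayer--Vietoris applied to the graph-of-spaces realisation of $BG$ with coefficients in $\calD_G$: since $G$ is $L^2$-acyclic (\cref{lemma:L2_polytope_RAAG}) and the non-trivial finitely generated subgroups $\mathbb{X}_v, \mathbb{X}_e$ have $L^2$-homology concentrated in degree one (\cref{prop_subgroup_Betti_numbers}), the long exact sequence collapses to an isomorphism $\bigoplus_e H_1(\mathbb{X}_e; \calD_G) \cong H_1(\mathbb{X}_v; \calD_G)$. With this in hand, \cref{prop_L2Betti} applied with $n = 1$ yields \[ b_1^{(2)}(\ker \phi) \leq \sum_e |\phi(t_e)|\,b_1^{(2)}(\mathbb{X}_e) = -\sum_e |\phi(t_e)|\,\chi(\mathbb{X}_e) = c(\mathbb{X}; \phi), \] where the middle equality reapplies \cref{prop_subgroup_Betti_numbers} to each (non-trivial) edge group. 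Taking the infimum over $\mathbb{X}$ completes the sandwich.

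The main delicate step is the Mayer--Vietoris collapse, which requires flatness of $\calD_G$ over the Linnell rings of the edge and vertex subgroups and careful bookkeeping in translating $\calD_G$-dimensions back to $L^2$-Betti numbers of the subgroups. A secondary concern is ensuring that the edge-absorption reduction preserves finite type and the type-$\FP$ property of the modified vertex group; this is where \cref{cor_coherence_characterisation} is essential, as coherence of $G$ passes to finitely generated subgroups.
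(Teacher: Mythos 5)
Your proof is correct and follows essentially the same route as the paper: the upper bound comes from the explicit splitting of \cref{thm_realising_splittings} combined with \eqref{eq:L2_Euler_kernel}, and the lower bound from \cref{prop_L2Betti} applied with $n=1$ to an arbitrary single-vertex finite-type splitting dual to $\phi$. The only differences are minor: you verify the hypothesis $\sum_e b_1^{(2)}(\mathbb{X}_e) = b_1^{(2)}(\mathbb{X}_v)$ via a Mayer--Vietoris argument with $\calD_G$-coefficients where the paper instead uses $\chi(G)=0$ and additivity of the Euler characteristic together with \cref{prop_subgroup_Betti_numbers}, and you explicitly absorb edges with $\phi(t_e)=0$ into the vertex group before invoking \cref{prop_L2Betti} --- a step the paper leaves implicit but which is genuinely needed to meet that proposition's hypothesis that $\phi(t_e)\neq 0$ for all edges.
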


\begin{proof}
     Let $G = A_L$ for some flag complex $L$. Note that $G$ is torsion-free and satisfies the Atiyah conjecture by \cite{LinnellOkunSchick2012}. Since $G$ is coherent, the flag complex $L$ is chordal by \cref{cor_coherence_characterisation}. Then by \cref{thm_realising_splittings}, there exists a single-vertex splitting of finite type which is dual to $\phi$, $G \cong \pi_1(\mathbb{X})$, such that
    \[ \begin{split}c(\mathbb{X}; \phi) &= - \sum_{v \in V(L)} |\phi(v)|\chi(A_{\mathrm{Lk}(v)}) \\
    &= -\chi^{(2)}(\ker \phi), \end{split}\]
    where the second equality follows by \cref{cor:l2_Euler_norm}.
    This proves the lower bound \[- \chi^{(2)}(\ker \phi) \geq \inf_{\mathbb{X}} c(\mathbb{X}; \phi).\]

    We now prove the upper bound. First note that the $L^2$-Betti numbers of $\ker \phi$ vanish in every degree not equal to one by \cref{prop:l2_betti_numbers_general_kernels}. Hence, the $L^2$-Euler characteristic of the kernel is \[\chi^{(2)}(\ker \phi) = -b_1^{(2)}(\ker \phi).\] Moreover, since the complex $L$ is contractible, it follows that $b_p^{(2)}(G) = 0$ for every $p \geq 0$ by \cref{thm_l2_RAAGs}. Hence $\chi(G) = 0$.

    Let $G \cong \pi_1(\mathbb{X})$ be a single vertex splitting of finite type which is dual to $\phi$. Since $\chi(G) = 0$, we have that 
    \[\chi(\mathbb{X}_v) = \sum_{e \in E^{+}(X)} \chi(\mathbb{X}_e).\] By \cref{prop_subgroup_Betti_numbers}, since the edge and vertex groups of $\mathbb{X}$ are finitely generated, it follows that $\chi(\mathbb{X}_v) = -b_1^{(2)}(\mathbb{X}_v)$ and $\chi(\mathbb{X}_e) = - b_1^{(2)}(\mathbb{X}_e)$ for every edge $e \in E(X)$. Hence, 
    \[ b_1^{(2)}(\mathbb{X}_v) = \sum_{e \in E^{+}(X)} b_1^{(2)}(\mathbb{X}_e).\] Moreover, by \cref{prop_subgroup_Betti_numbers} we have that $b_{0}^{(2)}(\mathbb{X}_e) = 0$ for every edge $e \in E(X)$, and $b_2^{(2)}(H) = 0$ for every finitely generated subgroup $H \leq G$. Thus, the splitting $G \cong \pi_1(\mathbb{X})$ satisfies all the hypotheses of \cref{prop_L2Betti} for $n = 1$. Hence, we have that 
    \[ b_1^{(2)}(\ker \phi) \leq  \sum_{e \in E^{+}(X)} |\phi(t_e)| \cdot \lb_1(\mathbb{X}_e).\] Thus, 
    \[\begin{split} -\chi^{(2)}(\ker \phi) &\leq - \sum_{e \in E^{+}(X)} |\phi(t_e)| \cdot \chi^{(2)}(\mathbb{X}_e) \\
    &= -\sum_{e \in E^{+}(X)} |\phi(t_e)| \cdot \chi(\mathbb{X}_e) ,\end{split}\]
    where the equality in the second line follows from the fact that each $\mathbb{X}_e$ is of type $\F$ and thus $\chi(\mathbb{X}_e) = \chi^{(2)}(\mathbb{X}_e)$. This proves that
    \[ -\chi^{(2)}(\ker \phi) \leq c(\mathbb{X}; \phi).\]
    Hence, 
    \[ - \chi^{(2)}(\ker \phi) \leq \inf_{\mathbb{X}} c(\mathbb{X}; \phi),\]
    where the infimum is taken over all single-vertex splittings of $G$ of finite type which are dual to $\phi$. 
\end{proof}

\medskip

\begin{remark}
    The one-ended assumption in \cref{thm:L2_complexity} is necessary as illustrated by the following example. Let $\phi \colon F_2 \to \Z$ be an epimorphism. Then $\ker \phi$ is isomorphic to the free group of infinite rank and thus $\chi^{(2)}(\ker \phi) = -b_1^{(2)}(\ker \phi)$ is infinite. On the other hand, by \cref{thm_realising_splittings}, there exists a finite splitting $F_2 \cong \pi_1(\mathbb{X})$ which is dual to $\phi$ and such that $c(\mathbb{X}; \phi) \in \Z$.
\end{remark}

\medskip

\begin{theorem}\label{thm_main}
Let $G$ be a one-ended coherent \raagstop There is a canonical continuous function 
    \[ \| \cdot \|_T \colon H^1(G; \RR) \to \RR \]
    which satisfies the following properties.
    \begin{enumerate}
        \item The function $\| \cdot \|_T$ is convex and linear on rays through the origin.
        \item For any non-trivial integral character $\phi \in H^1(G ; \Z)$, we have that $\| \phi \|_T = c(G; \phi)$ where $c(G; \phi)$ is the \emph{splitting complexity} of $\phi$.
        \item In particular, for any fibered epimorphism $\phi \colon G \to \Z$, we have that $\| \phi \|_T = \chi(\ker \phi)$, where $\chi(\ker \phi)$ is the Euler characteristic of $\ker \phi$.
    \end{enumerate}
\end{theorem}

\begin{proof}

    By \cref{lemma:L2_polytope_RAAG}, the $L^2$-polytope $\mathcal{P}(G)$ exists and is a single polytope. Hence, by \cref{lemma:polytope_semi-norm}, the map 
    \[ \begin{split} H^1(G; \RR) &\to \RR  \\
    \phi &\mapsto \mathrm{th}_{\phi}(\mathcal{P}(G))\end{split} \]
    determines a semi-norm on $H^1(G; \RR)$. We define $\| \phi \|_T = \mathrm{th}_{\phi}(\mathcal{P}(G))$ for all $\phi \in H^1(G; \RR)$.
    
    By \cref{prop:L2Euler} and \cref{thm:L2_complexity}, if $\phi \colon G \to \Z$ is an epimorphism then 
    \begin{equation}\label{eq:L2_Euler_equality}\mathrm{th}_{\phi}(\mathcal{P}(G)) = - \chi^{(2)}(\ker \phi) = c(G; \phi).\end{equation} Moreover, by \cref{lemma:multiplicative}, it follows that for every non-zero integer $k \in \Z$, 
    \[c(G, k \cdot \phi) = |k| \cdot c(G; \phi).\]
    Hence, for any non-trivial homomorphism $\phi \colon G \to \Z$,
    \[ \| \phi \|_T = c(G; \phi).\]

    Then, it follows by \cref{cor_coherence_characterisation} that if the kernel $\ker \phi$ of an epimorphism $\phi \colon G \to \Z$ is finitely generated, then it is of type $\F$. Hence, 
    \[\chi^{(2)}(\ker \phi) = \chi(\ker \phi), \]
    and the final part of the Theorem follows by \eqref{eq:L2_Euler_equality}.

    Finally, we note that there is a unique function $\| \cdot \|_T$ with the given properties, since every such function is determined by $c(G; \phi)$ on a dense subset of $H^{1}(G;\RR)$. Moreover, for any automorphism $\Theta \colon G \to G$, we have that 
    \[ \chi^{(2)}(\ker (\phi \circ \Theta)) = \chi^{(2)}(\Theta^{-1}(\ker \phi)) = \chi^{(2)}(\ker \phi).\] Hence it must be the case that $\| \cdot \|_T$ is $\Theta$-invariant.
    
    \end{proof}

\bibliographystyle{alpha}
\bibliography{refs}

\end{document}